\def\tsc#1{\csdef{#1}{\textsc{\lowercase{#1}}\xspace}}
\begin{document}

	\newcommand\cero{\boldsymbol{0}}
	\newcommand{\norm}[1]{\ensuremath{\left\|#1\right\|}}
	\newcommand{\fracd}[2]{\displaystyle
		{\frac{{\displaystyle{#1}}}{{\displaystyle{#2}}}}}
	\newcommand\vdiv{\mathop{\mathrm{div}}\nolimits}
	\newcommand\curl{\mathop{\mathrm{curl}}\nolimits}
	
	\newcommand{\br}{\boldsymbol{r}}
	
	\newcommand\bcurl{\mathop{\mathbf{curl}}\nolimits}
	\newcommand\rot{\mathop{\textrm{rot}}\nolimits}
	\def\Curl{\mathop{\mathsf{Curl}}\nolimits}
	\newcommand{\rota}{\rot_{\mathbf{a}}}
	
	\newcommand{\abs}[1]{\ensuremath{\left|#1\right|}}
	
	\newcommand{\eps}{\ensuremath{\varepsilon}}
	\newcommand{\beps}{\boldsymbol{\varepsilon}}
	
	\newcommand{\bF}{\ensuremath{\boldsymbol{F}}}
	\newcommand{\bG}{\boldsymbol{G}}
	\newcommand{\bI}{\ensuremath{\mathbf{I}}}
	
	\newcommand{\uu}{\ensuremath{\overline{u}}}
	
	\newcommand{\Lap}{\mathrm{\Delta}}
	\newcommand{\Om}{\ensuremath{\Omega}}
	
	\newcommand{\K}{\ensuremath{\mathcal{K}}}
	
	\newcommand{\T}{\ensuremath{\mathcal{T}_h}}
	\newcommand{\C}{\ensuremath{\mathcal{C}}}
	\newcommand{\mS}{\ensuremath{\mathcal{S}}}
	\newcommand{\bu}{\boldsymbol{u}}
	\newcommand{\bn}{\boldsymbol{n}}
	\newcommand{\bq}{\boldsymbol{q}}
	\newcommand{\bp}{\boldsymbol{p}}
	\newcommand{\bomega}{\boldsymbol{\omega}}
	\newcommand{\bt}{\boldsymbol{t}}
	\newcommand{\bv}{\boldsymbol{v}}
	\newcommand{\bx}{\boldsymbol{x}}
	\newcommand{\bdiv}{\boldsymbol{\vdiv}}
	\newcommand{\bw}{\boldsymbol{w}}
	\newcommand{\bz}{\boldsymbol{z}}
	\newcommand{\ff}{\boldsymbol{f}}
	\renewcommand{\gg}{\boldsymbol{g}}
	\renewcommand{\ss}{\boldsymbol{s}}
	\newcommand{\bfi}{\ensuremath{\boldsymbol{\varphi}}}
	
	\def\dt{{\,\mathrm{d}t}}
	\def\dx{{\,\mathrm{d}\bx}}
	\def\x{{\,\mathrm{x}\x}}
	\def\dxdt{{\,\mathrm{d}\bx\mathrm{d}t}}
	\def\dxds{{\,\mathrm{d}\bx\mathrm{d}\s}}
	\def\ds{{\,\mathrm{d}\s\,}}
	\def\pt{\partial_t}
	\def\ps{\partial_s}
	
	\def\ph{\varphi}
	\def\s{\sigma}
	\def\t{\theta}
	\newcommand{\A}{\ensuremath{\mathcal{A}}}
	\def\cC{\mathcal{C}}
	\def\cD{\mathcal{D}}
	\def\cE{\mathcal{E}}
	\def\cR{\mathcal{R}}
	\def\cP{\mathcal{P}}
	\def\cQ{\mathcal{Q}}
	\def\cS{\mathcal{S}}
	\def\RT{\mathbf{RT}}
	\def\ND{\mathbf{ND}}

	\def\Ra{\mathrm{Ra}}
	\def\bD{\mathbf{D}}
	\def\bK{\mathbb{K}}
	\def\bV{\mathbf{V}}
	\def\Beta{\boldsymbol{\eta}}
	\def\bzeros{\mathbf{0}}
	\def\bH{\mathbf{H}}
	\def\bbH{\mathit{H}}
	\def\bW{\mathbf{W}}
	\def\bZ{\mathbf{Z}}
	\def\V{\mathrm{V}}
	\def\L{\mathrm{L}}
	\def\bL{\mathbf{L}}
	
	\def\cT{\mathcal{T}}
	\def\W{\mathcal{W}}
	\def\U{\mathcal{U}}
	\def\P{\mathcal{P}}
	\def\CVh{\mathcal{V}_h}
	\def\CQh{\mathcal{Q}_h}
	\def\bbH{\mathbb{H}}
	\def\bsi{\boldsymbol{\sigma}}
	\def\btau{\boldsymbol{\tau}}
	\def\tr{\mathop{\mathrm{tr}}\nolimits}
	\newcommand{\Grad  }{\nabla}
	\newcommand{\bpsi}{\boldsymbol{bpsi}}
	\def\bpsi{\mathbf{\psi}}

	\def\qin{{\quad\hbox{in}\quad}}
	\def\qon{{\quad\hbox{on}\quad}}
	
	\newtheorem{Remark}{Remark}[section]
	\numberwithin{equation}{section}

	\renewcommand\H{\mathrm{H}}
	\renewcommand\L{\mathrm{L}}
	\newcommand\Q{\mathrm{Q}}
	\newcommand\Z{\mathrm{Z}}
	\newcommand\R{\mathbb{R}}
	\newcommand\X{\mathrm{X}}
	
	\newcommand{\bphi}{\boldsymbol{\phi}}
	\newcommand{\cblue}{}
	\newcommand{\cred}[1]{\textcolor{red}{#1}}
	\newcommand{\cgray}[1]{\textcolor{cgray}{#1}}
	
	\def\Div{\mbox{{\rm Div}}}
	\newcommand*\nnorm[1]{|\!|\!| #1 |\!|\!|}
	
	\newcommand{\set}[1]{\left\{#1\right\}}
	
	\newcommand\disp{\displaystyle}
	
	\newcommand\qand{\quad\hbox{and }\quad}
	
	\renewcommand\O{\Omega}
	\newcommand\G{\Gamma}
	
	\renewcommand\H{\mathrm{H}}
	\renewcommand\L{\mathrm{L}}
	
	\newcommand\LO{\L^2(\O)}
	\newcommand\LOO{\L_{0}^2(\O)}
	
	\newcommand\hdivO{{\H(\mathrm{div};\O)}}
	\newcommand\HO{\H^1(\O)}
	\newcommand\HCUO{\H_{0}^1(\O)}
	\newcommand\HsO{\H^s(\O)}
	\newcommand\HusO{\H^{1+s}(\O)}
	\newcommand\nxn{n\times n}
	\renewcommand\S{\Sigma}
	\renewcommand\O{\Omega}
	\newcommand\btheta{\boldsymbol{\theta}}
	
	\newtheorem{Theorem}{Theorem}
	\newtheorem{Proposition}{Proposition}
	\newtheorem{Lemma}{Lemma}
	\newtheorem{Definition}{Definition}
	\newenvironment{proof}{\noindent{\it Proof.}}{\hfill$\square$}
	\let\WriteBookmarks\relax
	\def\floatpagepagefraction{1}
	\def\textpagefraction{.001}
	\shorttitle{Nonlinear predator-prey cross-diffusion--fluid system with two chemicals}
	\shortauthors{M. Bendahmane,  F. Karami, D. Meskine, J. Tagoudjeu and M. Zagour}
	
	\title [mode = title]{Mathematical analysis and multiscale derivation of a nonlinear predator-prey cross-diffusion--fluid system with two chemicals}
	
	
	\author{Mostafa Bendahmane$^1$}
	\fnmark
	\ead{mostafa.bendahmane@u-bordeaux.fr}
	\address{$1$ Institut de Math\'ematiques de Bordeaux, Universit\'e de Bordeaux, 33076 Bordeaux Cedex, France}
	
	\author{Fahd Karami$^2$}
	\fnmark
	\ead{fa.karami@uca.ma}
	\address{$2$ École Supérieure de Technologie d'Essaouira, Université Cadi Ayyad, B.P. 383 Essaouira El Jadida, Essaouira, Morocco}
	
	\author{Driss Meskine$^2$}
	\fnmark
	\ead{dr.meskine@uca.ac.ma }
	
	\author{Jacques Tagoudjeu$^3$}
	\ead{jacques.tagoudjeu@univ-yaounde1.cm}
	\address{$3$ École Nationale Supérieure Polytechnique de Yaoundé,
		Universite de Yaoundé I, B.P 8390 Yaoundé, Cameroun}
	
	\author{Mohamed Zagour$^4$}
	\fnmark
	\ead{zagourmohamed@gmail.com}
	\address{$4$ Euromed Research Center, Euromed University of Fes, Rte Principale Fès Meknès, 30000 Fès, Morocco}
	
	\begin{abstract}
		A nonlinear cross-diffusion--fluid system with chemical terms describing the dynamics of predator-prey living in a Newtonian fluid is proposed in this paper. The existence of a weak solution for the proposed macro-scale system is proved based on the Schauder fixed-point theory, a priori estimates, and compactness arguments. The proposed system is derived from the underlying description delivered by a kinetic-fluid theory model by a multiscale approach. Finally, we discuss the computational results for the proposed macro-scale system in two-dimensional space.
	\end{abstract}
	
	\begin{keywords}
	Chemical cross-diffusion--fluid; Kinetic--fluid theory;  Schauder fixed-point theory; Pattern formation; Finite-volume method; Finite-element method.
	\end{keywords}
	
	\maketitle

	\section{Introduction}
	
	As it is known, cross-diffusion mathematical models have been helpful to predict many interesting features such as pattern-formation, dynamics segregation phenomena, and competition between interacting populations. 
	Several models have been proposed and studied in the literature for competing species living outside the fluid medium. Originally, the classic ecological models began with the study of two interacting species (see, e.g., \cite{Jun10,SK97} for more details). Next, some cross-diffusion models of three and multiple interacting species \cite{ABS15,CDJ18,GJF15,KH94,MY95}  were proposed. The author in \cite{BKZ18} proposed a model with two interacting species living in a stationary fluid governed by the augmented Brinkman system. Recently, the author in \cite{ABKMZ20} generalized the aforesaid model to a nonlocal cross-diffusion with multiple species living in a Newtonian fluid governed by the incompressible Navier-Stokes.  Indeed, the motivation comes from the fact that many species are living in a fluid. Consequently, their dynamic is affected by the presence of the fluid. Compare with the previously cited articles, in the present paper we propose a nonlinear predator-prey cross-diffusion--fluid with two chemicals. The predator and prey species present the ability to orientate their movement towards the concentration of the chemical secreted by the other species. The problem is presented as a system of two parabolic equations describing the evolution of the predator and prey species and two elliptic equations for the concentration of the chemicals coupled with the incompressible Navier-Stokes.

	In order to state our problem, let consider $\Omega\in \mathbb{R}^d,$ $d = 1,2, 3$, a simply connected domain saturated with a Newtonian incompressible fluid, where also predator and prey species and two chemical substances are present. The physical scenario of interest can be described by the following nonlinear macro-scale system in $ T := (0, T)\times\Omega $ for a fixed time $T > 0$ written in a non-dimensional form
	\begin{equation}\label{CrossDiff}
		\left\{
		\begin{array}{l}
			\displaystyle \partial_t n_{1}+U\cdot\nabla n_1- \vdiv\big(d_{1}(n_1)\nabla n_1\Big) +\vdiv \bigl(\chi_1(n_1)\nabla 
			w_1 \bigl)=F_1(n_1,n_2),\\\\
			\displaystyle \partial_t n_{2}+U\cdot\nabla  n_2- \vdiv\big(d_{2}(n_2)\nabla n_2\big)  + \vdiv \bigl(\chi_2(n_2)\nabla 
			w_2 \bigl)=F_2(n_1,n_2),
			\\ \\
			U\cdot\nabla  w_1-\Delta w_1+\alpha_{1}\,w_1=\beta_1 n_2,\\\\
			U\cdot\nabla  w_2-\Delta w_2+\alpha_{2}\,w_2=\beta_2 n_1,\\\\
			\displaystyle  \partial_t U -\nu \Delta U+ k(U\cdot \nabla)U+\nabla p+Q(n_1,n_2) \nabla\phi = \bzeros,  \; \;  \vdiv U=0.
		\end{array}
		\right.\end{equation}
	We augment our proposed macro-scale system with the following boundary conditions
	\begin{equation}\label{BC}
		\Big(d_{i}(n_i)\nabla n_i-\chi_i( n_i)\nabla w_i\Big)\cdot\eta=0,\qquad \nabla w_i\,\eta=0,\qquad U=\bzeros,\qquad \text{on}\;\;\; \Sigma_T=(0,T]\times\partial\Omega
	\end{equation}
	and the initial conditions
	\begin{equation}\label{IC}
		n_i(t=0,x)=n_{i,0}(x),\qquad U(t=0,x)=U_0(x)\qquad \text{for}\;\; x\in \Omega
	\end{equation}
	for $i=1,2$.
	Here $n_1$ and $n_2$ denote population densities of the predator and the prey, respectively, $w_1$ and $w_2$ represent concentrations of the (chemical) signals produced by $n_2$ and $n_1$ respectively; $U$ is the fluid velocity, $p$ is the fluid pressure; $d_1$ and $d_2$ are the nonlinear diffusion functions; $\chi_i$ are the  nonlinear  tactic functions; $\alpha_i,\, \beta_i$ for $i = 1,2$ are positive constants. 
	
	Tactic coefficients play a major role from a modeling point of view. Indeed, one can find in nature that the movement of biological species is oriented by chemical gradients, where the predator moves towards the prey. Different types of situations can occur depending on the ability of predator and prey to direct their movement towards these chemical gradients. A typical example is the following: the tactic coefficients: $ \chi_1> 0$ and $\chi_2 < 0$ model the situation where the prey avoids the predator by moving away from its signal gradient, while the predator follows the prey by following a higher concentration of the chemical $w_2$.

	Finally, $F_1$ and $F_2$ are Lotka-Voltera reaction terms given by
	\begin{equation}\label{LV}
		F_1(n_1,n_2)=n_1(a_1-b_1n_1-c_1n_2),\;\;F_2(n_1,n_2)=n_2(a_2-c_2n_2+b_2n_1),
	\end{equation}
	where $a_1,\, a_2,\, b_1,\, b_2,\, c_1$ and $c_2$ are the positive coefficients of intra-specific competition and inter-specific competition.
	Let us mention that macro-scale system \eqref{CrossDiff} indicates that the predator is attracted by the chemical signal $w_1$ of the prey $n_1$, while the prey is repelled by the chemical signal $w_2$ produced by the predator. Note that the equations for prey and predator odors are elliptical rather than parabolic. This is justified in cases where odor diffusion occurs on a much faster time scale than the movement of individuals, which is reasonable in a variety of ecological settings. Note that we refer to $w_1$ and $w_2$ as chemical signals which can be interpreted more generally as potentials representing the possibility of an animal being detected from a distance, for example by visual means. However, for example, these quantities can model chemical odors. The coupling in our system \eqref{CrossDiff} appears through the convection term $U\cdot\nabla n_i$, $U\cdot\nabla w_i$ and the external force $Q(n_1,n_2)\nabla \phi$.

	In the absence of the fluid i.e. $(U=\bzeros)$, system \eqref{CrossDiff} reduces to chemotaxis chemicals system. Among others in \cite{NT19} the authors proved global existence and asymptotic behavior of solutions. Systems of two biological species with kinetic interaction have been considered in \cite{TW12}, where the stability of homogeneous steady states is obtained for one chemical (see \cite{BW16,BLM16,TW14}).
	Competitive systems of two biological species and a chemical with non-constant coefficients have been considered in \cite{TW19} where the authors establish sufficient conditions for the existence of solutions and its asymptotic dynamics. For the one species case with time and space dependence coefficients and growth term we refer to the reader to \cite{TW17}.
	Moreover, systems of two biological species with chemotactic abilities have been studied. For instance, in \cite{CNT18} the competitive system is studied and the global existence and asymptotic behavior are obtained for positive and bounded initial data. While in \cite{ZX17}, the reduced system is studied for constant coefficients in the competitive case.
	Several numerical methods have been used for solving nonlinear predator-prey and competitive systems. For instance, in \cite{YJ19} authors solve a two species system using a moving mesh finite elements in one dimension. Also, a particle method and the the meshless method of the Generalized Finite Differences have been applied respectively in \cite{GLS09,BGGNU21}.

	In this paper we address a multiscale derivation approach of the proposed macro-scale model from kinetic theory model based on the micro-macro decomposition method. We start by rewriting the kinetic theory model as a coupled system of microscopic and macroscopic equations. Next, the proposed macro-scale model is derived by low order asymptotic expansions in terms of a small parameter. This approach has been applied to the micro-macro application in different fields. For instance, chemotaxis phenomena \cite{BBNS15}, a time-dependent SEIRD reaction diffusion \cite{Z21}, and patterns formation induced by cross-diffusion in a fluid~\cite{ABKMZ20,BKZ18}. Note that this technique motivated the design numerical tools that preserve the asymptotic property \cite{JI99,KL98}. Specifically, these methods design the uniform stability and consistency of numerical schemes in the limit along the transition from kinetic regime to macroscopic regime.

	This paper is organized as follows: Section \ref{Sec2} is devoted to establish the existence of weak solutions of the proposed nonlinear cross-diffusion--fluid system \eqref{CrossDiff}. The proof is based on Schauder fixed-point theory, a priori estimates, and compactness arguments. In Section \ref{Sec3}, we present our kinetic--fluid theory model and its properties. According to a multiscale approach based on the micro-macro decomposition method, we obtain an equivalent micro-macro formulation. This leads to derive our proposed macro-scale system \eqref{CrossDiff}. 
	In Section \ref{Sec4}, we investigate the computational analysis of cross-diffusion--fluid system \eqref{CrossDiff} in two dimensional space. We provide several numerical simulations with two cases: in the first case, we ignore the fluid effect ($U=\bzeros$) by using finite volume method. in the second one, we consider the full system \eqref{CrossDiff} using finite element method.

	\section{Mathematical analysis}\label{Sec2}
     
     Let $\Omega$ be a bounded, {open subset} of ${{\R}}^d$, $d=2,3$ with a
     smooth boundary $\partial \Omega$ and $|\Omega|$ is the Lebesgue measure of $\Omega$. 
     We denote by $\H^1(\Omega)$ the Sobolev space of functions $u:\Omega \to \R$
     for which $n\in \L^2(\Omega)$ and $ \nabla n \in \L^2(\Omega ;{{\R}}^d)$.
     For $1\leq p \leq +\infty$, $\parallel \cdot \parallel_{\L^p(\Omega)}$ 
     is the usual norm in $\L^p(\Omega)$.
     If $X$ is a Banach space, $a<b$ and $1\leq p \leq +\infty$, 
     $\L^p(a,b;X)$ denotes the space of all measurable functions 
     $n~:~(a,b) \longrightarrow X$ such that $\parallel n(\cdot)\parallel_X$ 
     belongs to $\L^p(a,b)$. 
     
     Now, we introduce basic spaces in the study of the Navier-Stokes equation. Let the spaces $\mathcal{V}, \:\bV$ and $\bH$ defined as:
     $$\mathcal{V}= \{U\in \mathcal{D}(\Omega), \;  \vdiv U=0\} ,\; \bV= \overline{\mathcal{V}}^{\H^1_0(\Omega)},\; \bH= \overline{\mathcal{V}}^{\L^2(\Omega)}. $$
     The coupled system of interest (\ref{CrossDiff}) can be written as for $i=1,2$
     \begin{equation}\label{CrossFF}
     	\left\{
     	\begin{array}{ll}
     		\displaystyle
     		\partial_t n_i+ U\cdot\nabla n_i-  \vdiv\Big(d_i({n_i} ) \nabla n_i+\chi_i(n_i)\nabla w_i \Big)= F_i(n_1,n_2),&\hbox{in}\;\Omega_T,\\ \\
     		U\cdot\nabla w_1-\Delta w_1+\alpha_1\, w_1=\beta_1\,n_2,&\hbox{in}\;\Omega_T,\\ \\
     		U\cdot\nabla w_2-\Delta w_2+\alpha_2\, w_2=\beta_2\,n_1,&\hbox{in}\;\Omega_T,\\ \\
     		\partial_t U -\nu \Delta U+(U\cdot \nabla)U+ \nabla p+Q(n_1,n_2) \nabla\phi = \bzeros,  \; \;  \vdiv U=0,&\hbox{in}\;\Omega_T,\\
     		{}\\
     		n_i(t=0, x)=n_{i0}( x), \; U(t=0,\bx)=U_{0}( x),&\hbox{in}\;\Omega,\\
     		{}\\
     		U=\bzeros \quad \mbox{ and } \Big(d_i(n_i)\nabla n_i+ \chi_i(n_i)\nabla( w_i)\Big) \eta=0,&\hbox{on}\;\Sigma_T.
     	\end{array}
     	\right.
     \end{equation}

     \noindent In the proof of the existence of weak solutions, we will use the following assumptions.
     
     \noindent We assume that for $i \in\{1,2\}$, the function $d_{n_i}: \R \to \R^+ $ is continuous and satisfying the following: 
     \begin{equation}\label{assump:diffusion}
     	\underline{d_i} \leq d_{n_i}(r)\leq \bar{d_i}\quad  \forall r\in \R \;\mbox{and}  \quad \forall i\in\{1,2\} \; 
     \end{equation}
     where $\underline{d_i}$ and $\bar{d_i}$ are strictly positive constants.
     
     For the reaction terms ${F}_{i}$, they are continuous functions and there exists a constant $C_F$ such that
     \begin{equation}\label{Est:seconde}
     	\forall n_1,n_2\geq 0, \quad  {F}_{1}(0,n_2) \geq 0, \quad {F}_{2}(n_1,0) \geq 0 \;\mbox{and}
     	\quad \sum_{i=1}^2{F}_{i}(n_1,n_2)\,n_i\leq  C_F  (1+ n_1^2+ n_2^2). 
     \end{equation}
     
     \noindent  Regarding the function $Q$, we assume it is a continuous function and there exists constant $C_Q>0$ such that
     \begin{equation}\label{assump-G}
     	\abs{Q(n_1,n_2)}\leq C_Q(1+\abs{n_1}+\abs{n_2}) \text{ for all $n_1,n_2\in \R$}. 
     \end{equation}  
     
     \noindent Moreover, we assume that $$ \nabla\phi\in \big( \L^{d+2}(\Omega) \big)^d \quad \mbox{ and } \phi \mbox{  is independent of time}$$ stands for the gravitational potential produced by the action of physical forces on the species.

     \noindent Finally, we assume that initial conditions are 
     \begin{equation}\label{assump-initial}
     	n_{i,0}\geq 0,\quad n_{i,0}\in \L^2(\Omega),\quad U_{0}\in \bH.
     \end{equation}
     Now we define what we mean by weak solution of
     the system \eqref{CrossFF}. We also supply our main existence result.
     \begin{Definition}\label{defSol} 
     	We say that  $(n_1,n_2,w_1,w_2,U)$ is a weak solution to problem (\ref{CrossDiff}), if  $n_i$ is nonnegative,
     	\begin{equation*}\begin{split}
     			&n_i \in L^\infty(\Om_T) \cap L^2(0,T; \H^1(\Omega))\cap C(0,T;L^2(\Om)),\;\;\partial_t n_i\in\L^2(0,T;(H^1(\Om))^\prime),\\
     			&w_i\in  L^\infty(0,T;W^{2,p}(\Omega))\,\, \text{ for all $p>1$},\\
     			&U \in \L^2(0,T; \bV) \cap C\big([0,\:T]; \: \bH \big),\;\;\partial_t U\in\L^1(0,T;\bV^\prime),
     	\end{split}\end{equation*}
     	and the following identities hold 
     	\begin{equation}\label{wf-1}
     		\begin{split}
     			&\displaystyle \int_0^{T}\left \langle\partial_t n_i, \psi_i \right\rangle_{(H^1)^\prime,H^1}\,dt
     			-\iint_{\Omega_T}U\cdot\nabla n_i \,\psi_i \,dx\,dt + \iint_{\Omega_T} d_i({ n_i} )\nabla n_i\cdot\nabla \psi_i \,dx\,dt  \\ 
     			&\qquad \qquad \qquad + \iint_{\Omega_T}  \chi_i(n_i)\nabla w_i\cdot\nabla \psi_i \,dx\,dt 
     			= \iint_{\Omega_T} F_i( n_1,n_2)\psi_i \,dx\,dt,\\
     			&		\displaystyle -\iint_{\Omega_T}U\cdot\nabla w_1 \varphi_1 \,dx\,dt + \iint_{\Omega_T} \nabla w_1 \nabla  \varphi_1 \,dx\,dt   
     			= \iint_{\Omega_T} (\beta_1n_2-\alpha_1w_1)\varphi_1 \,dx\,dt,\\
     			&		\displaystyle -\iint_{\Omega_T}U\cdot\nabla w_2 \varphi_2 \,dx\,dt + \iint_{\Omega_T} \nabla w_2 \nabla  \varphi_2 \,dx\,dt   
     			= \iint_{\Omega_T} (\beta_2n_1-\alpha_2w_2)\varphi_2 \,dx\,dt,\\
     			&		\displaystyle	 \int_0^{T} \left\langle\partial_t U,\Psi\right\rangle_{\bV^\prime,\bV} \,dt   +  \nu \int_{\Omega}  \nabla U : \nabla\Psi \,dx\,dt    + \iint_{\Omega_T} (U \cdot \nabla) U\cdot \Psi \,dx\,dt \\
     			&\qquad \qquad \qquad + \iint_{\Omega_T} Q( n_1,n_2) \nabla\phi \cdot\Psi \,dx\,dt =\bzeros,
     	\end{split}\end{equation}
     	for all test functions $\psi_i,\;\varphi_i \in  L^2(0,T; \H^1(\Omega))$ 
     	and $\Psi\in \L^2(0,T; \bV) $, for $i=1,2$. 
     \end{Definition}
     \begin{Theorem}\label{theo-weak}
     	Assume that conditions \eqref{Est:seconde} and (\ref{assump-initial}) hold. If $n_{i,0}\in L^\infty(\Omega)$
     	with $0\le n_{i,0}\le u_{i,m}$ a.e. in $\Omega$ for $i=1,2$, then the problem (\ref{CrossFF}) has a weak solution in the sense of Definition \ref{defSol}.
     \end{Theorem}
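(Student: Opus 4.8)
The plan is to obtain $(n_1,n_2,w_1,w_2,U)$ as a fixed point of a solution operator built by decoupling and linearizing \eqref{CrossFF}, closing the argument with a priori estimates and compactness, as announced in the introduction. Let $M_1,M_2>0$ and $R>0$ be constants depending only on the data (the initial bounds $u_{i,m}$, the reaction coefficients in \eqref{LV}, $\alpha_i,\beta_i,\nu$, $\underline{d_i},\bar d_i$, $\nabla\phi$, $\|U_0\|$) and work on the closed convex set
\[
\mathcal{K}=\big\{(\bar n_1,\bar n_2,\bar U)\in (L^2(\Omega_T))^2\times L^2(0,T;\bH):\ 0\le \bar n_i\le M_i\ \text{a.e.},\ \|\bar U\|_{L^2(0,T;\bV)}\le R\big\}.
\]
Given $(\bar n_1,\bar n_2,\bar U)\in\mathcal{K}$ I would proceed in three decoupled steps. \emph{(i)} Solve the elliptic problems $\bar U\cdot\nabla w_i-\Delta w_i+\alpha_i w_i=\beta_i\bar n_j$ ($j\neq i$) with $\nabla w_i\cdot\eta=0$: since $\vdiv\bar U=0$ and $\bar U|_{\partial\Omega}=\bzeros$ the bilinear form is coercive, Lax--Milgram gives a unique $w_i\in H^1(\Omega)$ for a.e.\ $t$, the maximum principle gives $w_i\ge 0$, and elliptic $L^p$-regularity (the convective term $\bar U\cdot\nabla w_i=\vdiv(w_i\bar U)$ being absorbed by a bootstrap using $0\le\bar n_j\le M_j$) yields $w_i\in L^\infty(0,T;W^{2,p}(\Omega))$ for every admissible $p$. \emph{(ii)} Solve the two \emph{linear} parabolic problems $\partial_t n_i+\bar U\cdot\nabla n_i-\vdiv(d_i(\bar n_i)\nabla n_i)=-\vdiv(\chi_i(\bar n_i)\nabla w_i)+F_i(\bar n_1,\bar n_2)$ with the flux boundary condition and $n_i(0)=n_{i,0}$; by \eqref{assump:diffusion} the principal part is uniformly elliptic with $L^\infty$ coefficients, so a Faedo--Galerkin argument gives a unique $n_i\in L^2(0,T;H^1(\Omega))\cap C([0,T];L^2(\Omega))$ with $\partial_t n_i\in L^2(0,T;(H^1(\Omega))')$. \emph{(iii)} Solve the linearized (Oseen) fluid problem $\partial_t U-\nu\Delta U+(\bar U\cdot\nabla)U+\nabla p=-Q(\bar n_1,\bar n_2)\nabla\phi$, $\vdiv U=0$, $U|_{\partial\Omega}=\bzeros$, $U(0)=U_0$, again by Galerkin, obtaining a unique $U\in L^2(0,T;\bV)\cap C([0,T];\bH)$. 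This defines $\mathcal{T}(\bar n_1,\bar n_2,\bar U)=(n_1,n_2,U)$; a fixed point, together with the reconstructed $w_i$, is the sought weak solution.

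\medskip
\noindent\textbf{A priori estimates.}
Nonnegativity: testing the $n_i$-equation with $-(n_i)^-$, the drift term vanishes ($\vdiv\bar U=0$, $\bar U|_{\partial\Omega}=\bzeros$), the diffusion term is sign-definite, the taxis term is absorbed via Young's inequality and $w_i\in W^{2,p}$, and $F_1(0,\bar n_2)\ge 0$, $F_2(\bar n_1,0)\ge 0$ from \eqref{Est:seconde} force $(n_i)^-\equiv 0$. Energy: testing the $n_i$-equations with $n_i$, summing, absorbing the drift ($=0$) and the taxis terms into the ellipticity, and invoking \eqref{Est:seconde} with Gronwall bounds $\{n_i\}$ in $L^\infty(0,T;L^2(\Omega))\cap L^2(0,T;H^1(\Omega))$ in terms of the data only; testing the fluid equation with $U$ and using \eqref{assump-G}, \eqref{assump-initial} and $\nabla\phi\in(L^{d+2}(\Omega))^d$ bounds $U$ in $L^\infty(0,T;\bH)\cap L^2(0,T;\bV)$, fixing $R$. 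Comparison in the equations then bounds $\partial_t n_i$ in $L^2(0,T;(H^1)')$ and $\partial_t U$ in $L^{4/3}(0,T;\bV')$ (in $L^2$ when $d=2$). Finally, the $L^\infty(\Omega_T)$ bound: in a Moser/De Giorgi iteration for $n_i$ the drift again drops out on every level set ($\vdiv\bar U=0$), the taxis contribution $\int_{\{n_i>k\}}\chi_i(\bar n_i)\nabla w_i\cdot\nabla(n_i-k)_+$ is controlled by the ellipticity plus $\int_{\{n_i>k\}}|\nabla w_i|^2$, a good term gaining a power of $|\{n_i>k\}|$ since $\nabla w_i\in L^\infty(0,T;L^q(\Omega))$ for $q$ large (coming from the energy bound on $\bar n_j$, uniformly over $\mathcal{K}$), and for $k\ge M_i$ the reaction term $\int_{\{n_i>k\}}F_i(\cdot)(n_i-k)_+$ has a favourable sign by the logistic structure \eqref{LV}. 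This yields $\|n_i\|_{L^\infty(\Omega_T)}\le M_i$, so $\mathcal{T}(\mathcal{K})\subset\mathcal{K}$.

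\medskip
\noindent\textbf{Compactness, continuity, conclusion.}
By the Aubin--Lions--Simon lemma the above bounds make $\{n_i\}$ relatively compact in $L^2(\Omega_T)$ (and in $C([0,T];(H^1)')$, hence, using the uniform $L^\infty$ bound, in $C([0,T];L^2(\Omega))$ with the weak topology) and $\{U\}$ relatively compact in $L^2(\Omega_T)$, so $\mathcal{T}(\mathcal{K})$ is precompact in the topology of $\mathcal{K}$. For continuity, if $(\bar n_1^m,\bar n_2^m,\bar U^m)\to(\bar n_1,\bar n_2,\bar U)$ in $\mathcal{K}$ then, along a subsequence, $\bar n_i^m\to\bar n_i$ and $\bar U^m\to\bar U$ a.e., so $d_i(\bar n_i^m),\chi_i(\bar n_i^m),F_i(\bar n_1^m,\bar n_2^m),Q(\bar n_1^m,\bar n_2^m)$ converge strongly by dominated convergence, the three linear problems pass to the limit, and the limit of $\mathcal{T}(\bar n_1^m,\bar n_2^m,\bar U^m)$ is $\mathcal{T}(\bar n_1,\bar n_2,\bar U)$; independence of the subsequence upgrades this to convergence of the whole sequence. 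Schauder's fixed-point theorem then produces $(n_1,n_2,U)\in\mathcal{K}$ with $\mathcal{T}(n_1,n_2,U)=(n_1,n_2,U)$; with $w_1,w_2$ this quintuple has the regularity demanded in Definition~\ref{defSol}, and the identities \eqref{wf-1} follow because every coupling term passes to the limit, the strong $L^2(\Omega_T)$ convergence of $n_i$ and $U$ combining with the weak $L^2$ convergence of the gradients in $d_i(n_i)\nabla n_i$, $\chi_i(n_i)\nabla w_i$, $U\cdot\nabla n_i$, $U\cdot\nabla w_i$ and $(U\cdot\nabla)U$. (If the reaction terms are truncated at the outset so that $\mathcal{T}$ is well defined for merely $L^2$ inputs, the uniform $L^\infty$ bound shows the truncation is inactive and is removed at the end.)

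\medskip
\noindent\textbf{Where the difficulty lies.}
I expect the main obstacle to be the uniform $L^\infty(\Omega_T)$ estimate for $n_i$ in the presence of the taxis fluxes $\vdiv(\chi_i(n_i)\nabla w_i)$: it is precisely there that the full $W^{2,p}$-regularity of $w_i$ is needed, so that the taxis term enters the De Giorgi/Moser scheme as an admissible, measure-gaining right-hand side, and the sign structure of the Lotka--Volterra terms \eqref{LV} is needed to control the reaction on the super-level sets; the divergence-free constraint $\vdiv U=0$ with the no-slip boundary condition is what renders the convective terms harmless both there and in all the energy identities. Secondary technical points are the elliptic bootstrap giving $w_i\in W^{2,p}$ when $\bar U$ is only of Leray class, and the careful matching of strong and weak convergences in the quadratic and quasilinear terms when passing to the limit, which is exactly what the Aubin--Lions compactness provides.
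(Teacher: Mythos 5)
Your overall architecture (Schauder fixed point, energy estimates, a Moser/De Giorgi iteration for the $L^\infty$ bound, Aubin--Lions compactness, a truncation of the reaction terms removed at the end) is the same as the paper's, and your Oseen linearization $(\bar U\cdot\nabla)U$ together with placing $\bar U$ inside the fixed-point set is a legitimate variant --- it even sidesteps the uniqueness question for the full Navier--Stokes system in $d=3$ on which the paper's construction of the map $\Gamma$ implicitly relies. The genuine gap is in your step \emph{(ii)}: you freeze \emph{all} nonlinearities of the species equations at $\bar n_i$ and solve the linear problem $\partial_t n_i+\bar U\cdot\nabla n_i-\vdiv(d_i(\bar n_i)\nabla n_i)=-\vdiv(\chi_i(\bar n_i)\nabla w_i)+F_i(\bar n_1,\bar n_2)$, whose right-hand side is a fixed distribution with no structural relation to the unknown $n_i$. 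For such an equation neither of the two facts you need for the invariance $\mathcal{T}(\mathcal{K})\subset\mathcal{K}$ holds. Nonnegativity: testing with $-(n_i)^-$, the taxis term leaves $\int_{\{n_i<0\}}\chi_i(\bar n_i)\nabla w_i\cdot\nabla (n_i)^-$, which after Young's inequality produces the uncontrolled source $\int_{\{n_i<0\}}|\chi_i(\bar n_i)\nabla w_i|^2$ rather than something Gronwall can close; and the quasi-positivity $F_1(0,\bar n_2)\ge 0$ from \eqref{Est:seconde} is irrelevant because your source is $F_1(\bar n_1,\bar n_2)$, not $F_1(n_1,\bar n_2)$, and can well be negative where $n_1<0$. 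Likewise, in your iteration the claim that $\int_{\{n_i>k\}}F_i(\cdot)(n_i-k)_+$ ``has a favourable sign for $k\ge M_i$'' is false: $F_i$ is evaluated at $(\bar n_1,\bar n_2)$, whose values on the super-level sets of $n_i$ bear no relation to $k$. (That particular point could be rescued by the uniform bound $F_1\le a_1^2/(4b_1)$ on the positive quadrant coming from \eqref{LV}, but not by the sign argument you state, and not at all without nonnegativity first.)

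The paper avoids exactly this by keeping the species equation \emph{quasilinear}: in \eqref{S1-u} the taxis coefficient is $\chi_{i,\eps}(n_i)$ and the reaction is $F_{i,\eps}(n_1,n_2)$, both evaluated at the unknowns, with $F_{i,\eps}$ extended by its value at zero for negative arguments and $\chi_{i,\eps}$ vanishing for nonpositive ones, so that both terms vanish identically on $\{n_i\le 0\}$ and the negative-part test function kills them; only $w_i$, $U$ and $Q$ are computed from the frozen $(\bar n_1,\bar n_2)$, and the fixed point is taken in $(\bar n_1,\bar n_2)$ alone. The $\eps$-regularization you relegate to a parenthesis is carried out in full there, with a second limit passage $\eps\to 0$ after the fixed point. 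To repair your proof you would have to adopt this semi-linearization (at the cost of proving existence for a quasilinear rather than linear parabolic problem), since positivity for your fully linearized equation is simply false in general. A smaller omission: the weak formulation requires recovering the pressure $p$, which the paper does at the end via de Rham's theorem and which your proposal never returns to.
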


     Our proof is based on approximation systems to which we can apply
     the Schauder fixed-point theorem to prove the convergence to weak
     solutions of the approximations. Let us now put our own contributions into a
     perspective. Our proof is based on introducing the following system for $i=1,2$
     \begin{equation}\label{S-reg}
     	\left\{
     	\begin{array}{ll}
     		\displaystyle
     		\partial_t n_i+ U\cdot\nabla n_i-  \vdiv\Big(d_i({n_i} ) \nabla n_i+\chi_{i,\eps}({n}_i)\nabla w_i \Big)= F_{i,\eps}({n}_1,{n}_2),&\hbox{in}\;\Omega_T,\\ \\
     		U\cdot\nabla w_1-\Delta w_1+\alpha_1w_1=\beta_1\,\overline{n}_2,&\hbox{in}\;\Omega_T,\\ \\
     		U\cdot\nabla w_2-\Delta w_2+\alpha_2w_2=\beta_2\,\overline{n}_1,&\hbox{in}\;\Omega_T,\\ \\
     		\partial_t U -\nu \Delta U+(U\cdot \nabla)U+ \nabla p+Q(\overline{n}_1,\overline{n}_2) \nabla\phi = \bzeros,  \; \;  \vdiv U=0,&\hbox{in}\;\Omega_T,\\
     		{}\\
     		n_i(t=0, x)=n_{i0}( x), \; U(t=0,\bx)=U_{0}( x),&\hbox{in}\;\Omega,\\
     		{}\\
     		U=\bzeros,  \quad\nabla w_i\cdot \eta \, \mbox{ and } \,\Big(d_i(n_i)\nabla n_i+ \chi_i(n_i)\nabla( w_i)\Big) \eta=0,&\hbox{on}\;\Sigma_T,
     	\end{array}
     	\right.
     \end{equation}
     for each fixed $\varepsilon >0$, where $ \overline{n}_i$
     is a fixed function. Herein
     $$
     F_{i,\eps}(r_1,r_2)=\frac{F_{i}(r_1,r_2)}{1+\varepsilon\abs{F_{i}(r_1,r_2)}}\quad \text{and}
     \quad\chi_{i,\eps}(r)=\frac{\chi_i(r)}{1+\varepsilon\abs{\chi_i(r)}},\qquad \mbox{for a.e. $r, r_1,r_2 \in {\R}$}.
     $$
     To prove Theorem \ref{theo-weak} we first
     prove existence of solutions to the problem (\ref {S-reg}) by
     applying the Schauder fixed-point theorem (in an appropriate
     functional setting), deriving a priori estimates, and then passing
     to the limit in the approximate solutions using monotonicity and
     compactness arguments. Having proved existence to the system
     (\ref{S-reg}), the goal is to send the regularization parameter
     $\varepsilon$ to zero in sequences of such solutions to fabricate weak
     solutions of the original systems (\ref{CrossFF}). Again
     convergence is achieved by a priori estimates and compactness
     arguments.
     
     \subsection{The fixed-point method}\label{Sect:nondegenerate}
     In this section we prove, for each fixed $\varepsilon> 0$, the existence of
     solutions to the fixed problem (\ref {S-reg}), by applying the
     Schauder fixed-point theorem.
     
     For technical reasons, we need to extend the function
     $F_{i,\eps}$ so that it becomes defined for all $(r_1,r_2)\in
     {\R}\times{\R}$. We do this by setting
     \begin{eqnarray}
     	F_{i,\eps}(r_1,r_2)= \left\{\begin{array}{ll}
     		F_{i,\eps}(r_1,0),  &\mbox{if $r_1\geq 0$, $r_2<0$},\\
     		F_{i,\eps}(0,r_2),  &\mbox{if $r_1< 0$, $r_2\geq 0$},\\
     		F_{i,\eps}(0,0),    &\mbox{if $r_1< 0$, $r_2< 0$}.
     	\end{array}
     	\right.
     \end{eqnarray}
     
     Since we use Schauder fixed-point theorem, we need to introduce the
     following closed subset of the Banach space $L^2(\Om_T)$:
     \begin{equation}
     	\A=\{(n_1,n_2)\in L^2(\Om_T;\R^2): 0\le n_1(t,x),n_2(t,x)\le M,
     	\,\mbox{for a.e.}\,(t,x)\in \Om_T \},
     \end{equation}
     where $M$ is a positive constant to be fixed in Lemma \ref{lem-classic:est} below.
     
     \subsection{Existence result to the fixed problem}
     In this section, we omit the dependence of the solutions on the
     parameter $\varepsilon$. With $ (\overline{n}_1, \overline{n}_2)\in \A$ fixed, let $w_i$ and $U$ be the unique solutions of the system
     \begin{equation}\label{S1-v}
     	\begin{cases}
     		\partial_t U -\nu \Delta U+(U\cdot \nabla)U+ \nabla p+Q(\overline{n}_1,\overline{n}_2) \nabla\phi = \bzeros,  \; \;  \vdiv U=0,&\hbox{in}\;\Omega_T,\\
     		\displaystyle U\cdot\nabla w_1-\Delta w_1+\alpha_1\,w_1=\beta_1\,\overline{n}_2,&\hbox{in}\;\Omega_T,\\ 
     		\displaystyle U\cdot\nabla w_2-\Delta w_2+\alpha_2\,w_2=\beta_2\,\overline{n}_1,&\hbox{in}\;\Omega_T,\\ 
     		U(t=0,\bx)=U_{0}( x),&\hbox{in}\;\Omega,\\
     		U=\bzeros, \quad \nabla w_i\cdot \eta=0,&\hbox{on}\;\Sigma_T,
     	\end{cases}
     \end{equation}
     for $i=1,2$. Given the functions $w_i$ and $U$, let $n_i$ be the unique
     solution of the quasilinear parabolic problem
     \begin{equation}\label{S1-u}
     	\left\{\begin{array}{ll}
     		\displaystyle
     		\partial_t n_i+ U\cdot\nabla n_i-  \vdiv\Big(d_i({n_i} ) \nabla n_i+\chi_{i,\eps}({n}_i)\nabla w_i \Big)= F_{i,\eps}({n}_1,{n}_2),  \; \;  \vdiv U=0,&\hbox{in}\;\Omega_T,\\ 
     		n_i(t=0, x)=n_{i,0}( x),&\hbox{in}\;\Omega,\\
     		\Big(d_i(n_i)\nabla n_i+ \chi_{i,\eps}(n_i)\nabla( w_i)\Big) \cdot \eta=0,&\hbox{on}\;\Sigma_T,
     	\end{array}
     	\right.\end{equation}
     for $i=1,2$. In (\ref{S1-v})- (\ref{S1-u}), $U_0$ and
     $n_{i,0}$ are functions satisfying the hypothesis of Theorem \ref{theo-weak} for $i=1,2$.
     
     Observe that for any fixed $  (\overline{n}_1, \overline{n}_2) \in \A$, problem (\ref{S1-v})
     is a pure Navier-Stokes equation coupled weakly to a an elliptic equation for $w_i$ for $i=1,2$,
     so we have immediately the following lemma (see for e.g. \cite{Ladyzenskaia:1968}).
     
     \begin{Lemma}
     	If  $U_0\in \bH$, then the system (\ref {S1-v}) has a
     	unique solution $(U, w_i)\in \L^2(0,T; \bV) \times L^\infty(0,T;W^{2,p}(\Omega))$ for $i=1,2$, for all $p>1$.
     \end{Lemma}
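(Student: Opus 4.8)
The plan is to exploit the one-directional coupling in \eqref{S1-v}: the Navier--Stokes block does not involve $w_1,w_2$, and once $U$ is known the equations for $w_1$ and $w_2$ become two independent linear elliptic problems. So I would first solve for $U$ and then, pointwise in time, for each $w_i$.

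For the fluid part, I would observe that since $(\overline{n}_1,\overline{n}_2)\in\A$ both components lie between $0$ and $M$, so by \eqref{assump-G} the coefficient $Q(\overline{n}_1,\overline{n}_2)$ is bounded on $\Omega_T$; as $\Omega$ is bounded and $\nabla\phi\in(\L^{d+2}(\Omega))^d\subset(\L^2(\Omega))^d$, the forcing $Q(\overline{n}_1,\overline{n}_2)\nabla\phi$ belongs to $\L^\infty(0,T;(\L^2(\Omega))^d)\hookrightarrow\L^2(0,T;\bV')$. This places us in the classical setting for the incompressible Navier--Stokes system with no-slip boundary condition and $U_0\in\bH$, so by \cite{Ladyzenskaia:1968} there is a weak solution $U\in\L^2(0,T;\bV)\cap C([0,T];\bH)$ with the time-derivative regularity of Definition \ref{defSol} and an energy bound depending only on $\|U_0\|_{\bH}$, $\nu$ and $M$; this solution is unique for $d=2$, and for $d=3$ one fixes a Leray--Hopf solution.

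Next, for $i\in\{1,2\}$ with $j\neq i$, I would work for a.e.\ $t\in(0,T)$ with the weak form of $U(t)\cdot\nabla w_i-\Delta w_i+\alpha_i w_i=\beta_i\overline{n}_j(t)$ subject to $\nabla w_i\cdot\eta=0$, i.e.\ seek $w_i(t)\in\H^1(\Omega)$ with
\[
\int_\Omega\nabla w_i\cdot\nabla v\,dx+\int_\Omega\big(U(t)\cdot\nabla w_i\big)v\,dx+\alpha_i\int_\Omega w_i v\,dx=\beta_i\int_\Omega\overline{n}_j(t)\,v\,dx\qquad\text{for all }v\in\H^1(\Omega).
\]
The bilinear form is bounded on $\H^1(\Omega)$ for a.e.\ $t$ since $U(t)\in\bV\hookrightarrow\L^6(\Omega)$, and---this is the key observation---it is coercive: because $\vdiv U=0$ in $\Omega$ and $U=\bzeros$ on $\partial\Omega$, an integration by parts gives $\int_\Omega(U(t)\cdot\nabla w)\,w\,dx=\tfrac12\int_\Omega U(t)\cdot\nabla(w^2)\,dx=0$, so evaluating the form at $(w,w)$ leaves $\|\nabla w\|_{\L^2(\Omega)}^2+\alpha_i\|w\|_{\L^2(\Omega)}^2\ge\min(1,\alpha_i)\|w\|_{\H^1(\Omega)}^2$. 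Lax--Milgram then gives a unique $w_i(t)$ with $\|w_i(t)\|_{\H^1(\Omega)}\le C\beta_i\|\overline{n}_j(t)\|_{\L^2(\Omega)}\le C\beta_i M|\Omega|^{1/2}$, a bound uniform in $t$, hence $w_i\in\L^\infty(0,T;\H^1(\Omega))$.

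To reach $\L^\infty(0,T;W^{2,p}(\Omega))$, I would recast the equation as the Neumann problem $-\Delta w_i=\beta_i\overline{n}_j-\alpha_i w_i-U\cdot\nabla w_i=:g_i$, $\nabla w_i\cdot\eta=0$, and bootstrap: from $w_i\in\H^1(\Omega)$ and $\overline{n}_j\in\L^\infty(\Omega)$ one gets $U\cdot\nabla w_i\in\L^{3/2}(\Omega)$, hence $g_i\in\L^{3/2}(\Omega)$ and $w_i\in W^{2,3/2}(\Omega)$, hence $\nabla w_i\in\L^3(\Omega)$, hence $g_i\in\L^2(\Omega)$, and so on, each step combining $W^{2,q}$-estimates for the Neumann Laplacian on the smooth domain $\Omega$ with Sobolev embeddings and the boundedness of $\overline{n}_j$; iterating raises the integrability exponent and yields $w_i\in W^{2,p}(\Omega)$ for every $p>1$, with bounds controlled by $\|U(t)\|_{\bV}$ and $M$. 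Measurability of $t\mapsto w_i(t)$ follows from continuity of the solution map, so together with the uniform bounds one obtains $w_i\in\L^\infty(0,T;W^{2,p}(\Omega))$; uniqueness of $(U,w_1,w_2)$ is then immediate, $U$ being unique and the form for $w_i$ coercive. The step I expect to be delicate is precisely this bootstrap: the drift $U\cdot\nabla w_i$ carries only the limited space integrability of $U\in\bV$, so one must verify both that the iteration genuinely closes and that the resulting $W^{2,p}$-estimates are uniform in time (the natural bounds involve powers of $\|U(t)\|_{\bV}$, which is only square-integrable in $t$); the remaining ingredients are a standard assembly of Navier--Stokes theory, Lax--Milgram and linear elliptic regularity.
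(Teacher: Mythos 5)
Your proposal takes exactly the route the paper intends: the paper offers no actual proof of this lemma, merely observing that \eqref{S1-v} decouples into a pure Navier--Stokes problem followed by linear elliptic problems for $w_1,w_2$ and citing \cite{Ladyzenskaia:1968}, so your Lax--Milgram plus elliptic-bootstrap elaboration is a faithful (and far more explicit) filling-in of that one-line argument. The caveats you flag are genuine weaknesses of the lemma as stated rather than of your argument: for $d=3$ uniqueness of the Leray--Hopf solution is not available, the bootstrap for $U\cdot\nabla w_i$ stalls once $U(t)\in\bV\hookrightarrow\L^6(\Omega)$ is the limiting factor (so ``for all $p>1$'' is not reached in $3$D), and the resulting $W^{2,p}$ bound is controlled by $\|U(t)\|_{\bV}$, which is only square-integrable in time, so the $\L^\infty(0,T;W^{2,p}(\Omega))$ claim is not actually delivered; the paper simply glosses over all of this.
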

     
     \noindent We have the following lemma for the quasilinear problem (\ref{S1-u}):
     \begin{Lemma}
     	If  $n_{i,0} \in L^\infty(\Omega)$, then, for any $\varepsilon >0$,
     	there exists a unique weak solution $n_{i} \in L^\infty (\Om_T)\cap
     	L^2(0,T;H^1(\Omega))$ to problem (\ref{S1-u}) for $i=1,2$. \label{lem2.2}
     \end{Lemma}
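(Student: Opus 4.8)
The plan is to solve the weakly--coupled quasilinear parabolic system \eqref{S1-u}, in which $U$ and $w_i$ are now fixed data (with $U\in L^2(0,T;\bV)$ and $w_i\in L^\infty(0,T;W^{2,p}(\Om))$, so that, taking $p>d$, $\nabla w_i\in L^\infty(\Om_T)$), by a Galerkin approximation, deriving uniform bounds and passing to the limit; uniqueness and nonnegativity are then obtained by energy arguments. Observe that $\chi_{i,\eps}$ and $F_{i,\eps}$ are bounded (by $1/\eps$) and continuous, so the coupling of the two equations through $F_{i,\eps}(n_1,n_2)$ causes no difficulty and both equations may be treated simultaneously.

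I would set up the Galerkin scheme on the Neumann eigenbasis $\{e_k\}_{k\ge1}$ of $-\Delta$ in $H^1(\Om)$, seeking $n_i^m=\sum_{k\le m}c_{i,k}^m(t)\,e_k$ solving the projected system: the mass matrix is the identity and the right--hand sides depend continuously on the coefficients (continuity of $d_i$, $\chi_{i,\eps}$, $F_{i,\eps}$), so Peano's theorem gives local solutions, extended globally by the a priori estimates below. Using $n_i^m$ as a test function and exploiting $\vdiv U=0$ together with $U|_{\partial\Om}=\bzeros$ to cancel the convection term, \eqref{assump:diffusion} bounds the diffusion contribution from below by $\underline{d_i}\,\|\nabla n_i^m\|_{L^2}^2$, while the tactic term ($\chi_{i,\eps}$ bounded, $\nabla w_i\in L^\infty$) and the bounded source $F_{i,\eps}$ are absorbed by Young's inequality; Gronwall's lemma then bounds $n_i^m$ in $L^\infty(0,T;L^2(\Om))\cap L^2(0,T;H^1(\Om))$ uniformly in $m$. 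Testing instead with $(n_i^m-k)^+$ for $k\ge\|n_{i,0}\|_{L^\infty(\Om)}$ and running a Stampacchia--type truncation iteration (the convection term again drops, and the remaining terms are all bounded) produces a bound $\|n_i^m\|_{L^\infty(\Om_T)}\le M$ with $M$ depending on $\|n_{i,0}\|_{L^\infty}$, $\eps$, $T$, $\Om$ and the data. Finally, rewriting $\iint_{\Om_T}U\cdot\nabla n_i^m\,\psi=-\iint_{\Om_T}n_i^m\,U\cdot\nabla\psi$ and using the above, the equation gives a bound for $\partial_t n_i^m$ in $L^2(0,T;(H^1(\Om))')$.

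By the Aubin--Lions lemma, along a subsequence $n_i^m\to n_i$ strongly in $L^2(\Om_T)$ and a.e. in $\Om_T$; hence $d_i(n_i^m)\to d_i(n_i)$, $\chi_{i,\eps}(n_i^m)\to\chi_{i,\eps}(n_i)$ and $F_{i,\eps}(n_1^m,n_2^m)\to F_{i,\eps}(n_1,n_2)$ a.e., and, being uniformly bounded, strongly in every $L^q(\Om_T)$, $q<\infty$ (dominated convergence; the extension makes $F_{i,\eps}$ continuous on $\R^2$). Combined with the weak convergence $\nabla n_i^m\rightharpoonup\nabla n_i$ in $L^2(\Om_T)$, this identifies $d_i(n_i^m)\nabla n_i^m\rightharpoonup d_i(n_i)\nabla n_i$ and $\chi_{i,\eps}(n_i^m)\nabla w_i\to\chi_{i,\eps}(n_i)\nabla w_i$ in $L^2(\Om_T)$, while the convection term passes to the limit in the integrated--by--parts form; this yields a weak solution with $n_i\in L^\infty(\Om_T)\cap L^2(0,T;H^1(\Om))$. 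Nonnegativity is obtained by testing the limit equation with $-n_i^-$: convection and diffusion are treated as before, the reaction contribution has the right sign because $F_{1,\eps}(0,n_2)\ge0$ and $F_{2,\eps}(n_1,0)\ge0$ by \eqref{Est:seconde} (preserved by the regularization and the extension), and the tactic term is absorbed since the taxis flux $\chi_i(n_i)\nabla w_i$ degenerates as $n_i\to0^+$, so that Gronwall's lemma forces $n_i^-\equiv0$.

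For uniqueness, I would subtract the equations of two solutions $(n_1,n_2)$ and $(\tilde n_1,\tilde n_2)$, test equation $i$ with $n_i-\tilde n_i$ and sum over $i$: the convection term vanishes ($\vdiv U=0$), the reaction and tactic differences are controlled by Young's inequality together with the Lipschitz continuity of $F_{i,\eps}$ and $\chi_{i,\eps}$ on the bounded range $[0,M]$, and the quasilinear term is handled through the Kirchhoff primitive $\Phi_i(r)=\int_0^r d_i(s)\,ds$, which is bi--Lipschitz since $\underline{d_i}\le d_i\le\bar{d_i}$: writing $d_i(n_i)\nabla n_i-d_i(\tilde n_i)\nabla\tilde n_i=\nabla\bigl(\Phi_i(n_i)-\Phi_i(\tilde n_i)\bigr)$ and testing with $\Phi_i(n_i)-\Phi_i(\tilde n_i)$ (comparable to $n_i-\tilde n_i$) yields a nonnegative principal term and a Gronwall inequality forcing $n_i=\tilde n_i$. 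I expect the main obstacle to be the passage to the limit in the nonlinear diffusion flux $d_i(n_i^m)\nabla n_i^m$, which hinges on the strong $L^2(\Om_T)$--compactness of $n_i^m$ and hence on the time--derivative estimate; closely related is the handling of the convection term $U\cdot\nabla n_i$ for $U$ merely in $L^2(0,T;\bV)$, whose integrability against test functions is borderline in three space dimensions and must be checked by interpolating $n_i^m$ between $L^\infty(0,T;L^2(\Om))$ and $L^2(0,T;H^1(\Om))$. The $L^\infty$ bound --- needed both for the statement and to keep $d_i(n_i)$, $\chi_{i,\eps}(n_i)$ and $F_{i,\eps}$ within their Lipschitz ranges for uniqueness --- is the other technically delicate point.
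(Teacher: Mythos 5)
The paper does not actually prove this lemma: it is stated as a standard fact for the quasilinear problem \eqref{S1-u}, whose lower-order terms $\chi_{i,\eps}$ and $F_{i,\eps}$ are bounded by $1/\eps$ after regularization, with \cite{Ladyzenskaia:1968} as the implicit reference. Your Galerkin construction is therefore filling in what the authors leave to the literature, and its main line for existence --- energy estimate, time-derivative bound, Aubin--Lions, a.e.\ convergence to identify $d_i(n_i^m)\nabla n_i^m$ and $\chi_{i,\eps}(n_i^m)$ --- is sound. One standard slip: you cannot test the Galerkin system with $(n_i^m-k)^+$, since this is not an element of the finite-dimensional Galerkin space; the Stampacchia truncation must be run on the limit equation, where $(n_i-k)^+\in L^2(0,T;H^1(\Om))$ is an admissible test function (you do exactly this for nonnegativity, but not for the $L^\infty$ bound). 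Since the $L^\infty(\Om_T)$ bound plays no role in the compactness step, this reordering costs nothing.

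The genuine gap is in uniqueness, which the lemma asserts. First, controlling $\chi_{i,\eps}(n_i)-\chi_{i,\eps}(\tilde n_i)$ and $F_{i,\eps}(n_1,n_2)-F_{i,\eps}(\tilde n_1,\tilde n_2)$ by $|n_i-\tilde n_i|$ requires $\chi_i$ and $F_i$ to be locally Lipschitz; the paper assumes only continuity, and the regularization $r\mapsto \chi_i(r)/(1+\eps|\chi_i(r)|)$ does not create Lipschitz continuity where there was none. Second, your treatment of the principal part does not close as written: testing the difference of the equations with $n_i-\tilde n_i$ gives the term $\iint\nabla\bigl(\Phi_i(n_i)-\Phi_i(\tilde n_i)\bigr)\cdot\nabla(n_i-\tilde n_i)$, which is not sign-definite for merely continuous $d_i$; testing instead with $\Phi_i(n_i)-\Phi_i(\tilde n_i)$ makes the principal term a perfect square, but then the pairing $\langle\partial_t(n_i-\tilde n_i),\Phi_i(n_i)-\Phi_i(\tilde n_i)\rangle$ is no longer the time derivative of a nonnegative quantity, and the convection term $\iint U\cdot\nabla(n_i-\tilde n_i)\,\bigl(\Phi_i(n_i)-\Phi_i(\tilde n_i)\bigr)$ no longer vanishes. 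You need either the $H^{-1}$-duality method --- test with $(-\Delta)^{-1}(n_i-\tilde n_i)$ so that the principal term becomes $\iint\bigl(\Phi_i(n_i)-\Phi_i(\tilde n_i)\bigr)(n_i-\tilde n_i)\ge\underline{d_i}\iint|n_i-\tilde n_i|^2$ --- or additional regularity of $d_i$. As written, the uniqueness half of the statement is not established.
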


     \subsection{The fixed-point method}
     \label{Sect:fixed}
     In this subsection, we introduce a map $\Gamma:\A\to \A$ such that
     $\Gamma( \overline{n}_1,\overline{n}_2)=(n_1,n_2)$, where $(n_1,n_2)$ solves (\ref {S1-u}),
     \textit{i.e.}, $\Gamma$ is the solution operator of (\ref {S1-u})
     associated with the coefficient $ (\overline{n}_1,\overline{n}_2)$ and the solution $w_i$ and $U$
     coming from (\ref {S1-v}) for $i=1,2$. By using the Schauder fixed-point
     theorem, we prove that the map $\Gamma$ has a fixed point for (\ref{S1-v})-(\ref {S1-u}).
     
     First, let us show that $\Gamma$ is a continuous mapping. For this,
     we let $(\overline{n}_{1, \kappa},\overline{n}_{2,\kappa})_\kappa $ be a sequence in $\A$ and
     $(\overline{n}_1,\overline{n}_2) \in \A$ be such that $(\overline{n}_{1,\kappa},\overline{n}_{2,\kappa})\to  (\overline{n}_1,\overline{n}_2)$ in $L^2(\Om_T;\R^2)$ as $\kappa \to \infty$. Define
     $({n}_{1,\kappa},{n}_{2,\kappa})=\Gamma(\overline{n}_{1, \kappa},\overline{n}_{2,\kappa})$, \textit{i.e.},
     ${n}_{1,\kappa},{n}_{2,\kappa}$ is the solution of (\ref{S1-u}) associated with
     $ (\overline{n}_{1, \kappa},\overline{n}_{2,\kappa})$ and the solutions $w_{i,\kappa}$ and $U_\kappa$ of
     (\ref {S1-v}) for $i=1,2$. The goal is to show that $({n}_{1, \kappa},{n}_{2,\kappa})$ converges to $\Gamma(\overline{n}_{1},\overline{n}_{2})$ in $L^2(\Om_T)$. \\
     We start with the following lemma where the proof can be found in (\cite{Ladyzenskaia:1968} and in \cite{Te01} Lemma 4.3) so we omit it
     \begin{Lemma}\label{lem-reg1}
     	If  $U_0\in \bH$, then the solution $({U}_{ \kappa},{w}_{i,\kappa})_\kappa$  to the system (\ref {S1-v}) is uniformly bound in $\L^2(0,T; \bV) \times L^\infty(0,T;W^{2,p}(\Omega))$ for $i=1,2$, for all $p>1$. Moreover,
     	$\partial_t {U}_{ \kappa}$ is uniformly bounded in $\L^1(0,T; \bV^\prime)$.
     \end{Lemma}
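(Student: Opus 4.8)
The plan is to derive, for each of the three unknowns, a priori bounds whose constants do not depend on $\kappa$. The key observation is that (\ref{S1-v}) depends on $\kappa$ \emph{only} through the fixed data $(\overline{n}_{1,\kappa},\overline{n}_{2,\kappa})\in\A$, for which $0\le\overline{n}_{i,\kappa}\le M$ a.e.\ in $\Omega_T$. Hence, by (\ref{assump-G}), $Q(\overline{n}_{1,\kappa},\overline{n}_{2,\kappa})\nabla\phi$ is bounded in $\L^\infty(0,T;\L^{d+2}(\Omega))$ by $C_Q(1+2M)\|\nabla\phi\|_{\L^{d+2}(\Omega)}$ and $\beta_i\overline{n}_{j,\kappa}$ is bounded in $\L^\infty(\Omega_T)$ by $\beta_iM$, both uniformly in $\kappa$. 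Everything then reduces to classical estimates for the Navier--Stokes system with a uniformly bounded body force and for a linear elliptic equation with a uniformly bounded right-hand side, so the resulting constants come out $\kappa$-independent.

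\emph{Navier--Stokes bounds.} First I would test the momentum equation with $U_\kappa(t)\in\bV$: the pressure term vanishes since $\vdiv U_\kappa=0$, and the inertial term vanishes since $\int_\Omega(U_\kappa\cdot\nabla)U_\kappa\cdot U_\kappa\,dx=\tfrac12\int_\Omega U_\kappa\cdot\nabla|U_\kappa|^2\,dx=0$ (using $\vdiv U_\kappa=0$ and $U_\kappa=\bzeros$ on $\partial\Omega$), leaving $\tfrac12\tfrac{d}{dt}\|U_\kappa\|_{\L^2(\Omega)}^2+\nu\|\nabla U_\kappa\|_{\L^2(\Omega)}^2=-\int_\Omega Q(\overline{n}_{1,\kappa},\overline{n}_{2,\kappa})\nabla\phi\cdot U_\kappa\,dx$, whose right-hand side is $\le C(1+\|U_\kappa\|_{\L^2(\Omega)}^2)$ by Hölder ($\L^{d+2}(\Omega)\hookrightarrow\L^2(\Omega)$ because $\Omega$ is bounded) and Young's inequality; Grönwall's lemma then gives a uniform bound of $U_\kappa$ in $\L^\infty(0,T;\bH)\cap\L^2(0,T;\bV)$. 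For the time derivative I would pair the equation with $\Psi\in\bV$ (again the pressure disappears) and estimate the three remaining contributions: the viscous one by $\nu\|\nabla U_\kappa\|_{\L^2}\|\nabla\Psi\|_{\L^2}$, which is in $\L^2(0,T)$; the inertial one, rewritten as $-\int_\Omega(U_\kappa\cdot\nabla)\Psi\cdot U_\kappa\,dx$ and bounded by $\|U_\kappa\|_{\L^4}^2\|\nabla\Psi\|_{\L^2}$ with $\|U_\kappa\|_{\L^4}^2\le C\|U_\kappa\|_{\L^2}^{2-d/2}\|\nabla U_\kappa\|_{\L^2}^{d/2}$ (Gagliardo--Nirenberg, $d\in\{2,3\}$), which lies in $\L^1(0,T)$ by the previous step; and the forcing one, bounded in $\L^\infty(0,T)$ via Poincaré. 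Hence $\partial_t U_\kappa$ is uniformly bounded in $\L^1(0,T;\bV^\prime)$.

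\emph{Elliptic bounds.} For a.e.\ fixed $t$ the equations for $w_{i,\kappa}(t)$ are linear elliptic Neumann problems. Testing with $w_{i,\kappa}(t)$, the convective term is skew-symmetric and disappears, since $\int_\Omega(U_\kappa\cdot\nabla w_{i,\kappa})w_{i,\kappa}\,dx=\tfrac12\int_\Omega U_\kappa\cdot\nabla|w_{i,\kappa}|^2\,dx=0$, leaving $\|\nabla w_{i,\kappa}(t)\|_{\L^2}^2+\alpha_i\|w_{i,\kappa}(t)\|_{\L^2}^2\le\beta_iM|\Omega|^{1/2}\|w_{i,\kappa}(t)\|_{\L^2}$, so $w_{i,\kappa}$ is uniformly bounded in $\L^\infty(0,T;\H^1(\Omega))$ — and, crucially, this first bound involves no norm of $U_\kappa$ whatsoever. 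To upgrade it to $\L^\infty(0,T;W^{2,p}(\Omega))$ I would bootstrap $\L^p$ elliptic regularity for $-\Delta w_{i,\kappa}=\beta_i\overline{n}_{j,\kappa}-\alpha_iw_{i,\kappa}-U_\kappa\cdot\nabla w_{i,\kappa}$: each pass raises the integrability of $\nabla w_{i,\kappa}$ through the Sobolev embedding of $W^{2,p}$, hence that of the convective term, while $\overline{n}_{j,\kappa}$ is already in $\L^\infty$; iterating until the right-hand side lies in $\L^p(\Omega)$ gives the claim, with constants depending only on $M$ and the norms established above. This is exactly the estimate recorded in \cite{Ladyzenskaia:1968} and \cite[Lemma~4.3]{Te01}.

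I expect this last step to be the only genuinely delicate point. The convective coefficient $U_\kappa$ is merely in $\L^2(0,T;\bV)$, so in three dimensions the integrability gained at each round of the iteration must be tracked carefully, and for the uniform-in-$t$ conclusion one should feed the $t$-uniform $\H^1$ bound for $w_{i,\kappa}(t)$ (which does not see $U_\kappa$) into the elliptic regularity, rather than the gradient bound on $U_\kappa$, which is only square-integrable in time. The Navier--Stokes energy and time-derivative estimates, by contrast, are entirely standard once one notes that the inertial term vanishes against $U_\kappa$ and that the pressure is annihilated by divergence-free test functions.
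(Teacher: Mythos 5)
The paper gives no proof of this lemma at all --- it simply points to \cite{Ladyzenskaia:1968} and \cite{Te01} (Lemma 4.3) and omits the argument --- so there is nothing to compare line by line. Your Navier--Stokes estimates are correct and are in fact the same computations the paper carries out later, in part $(iii)$ of Lemma \ref{lem-classic:est} and in the derivation of \eqref{est2-u-weak-bis}: the uniform bound on the forcing comes from $0\le\overline{n}_{i,\kappa}\le M$ together with \eqref{assump-G}, the inertial term vanishes against $U_\kappa$, Gr\"onwall gives $\L^\infty(0,T;\bH)\cap\L^2(0,T;\bV)$, and the $\L^{4/3}$-in-time (hence $\L^1$-in-time) control of $\|U_\kappa\|_{\L^4}^2$ via Gagliardo--Nirenberg gives $\partial_t U_\kappa$ bounded in $\L^1(0,T;\bV^\prime)$. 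The $\L^\infty(0,T;\H^1(\Omega))$ bound for $w_{i,\kappa}$ is also fine, since the convective term is annihilated by $\vdiv U_\kappa=0$ and $U_\kappa=\bzeros$ on $\partial\Omega$.

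The genuine gap is the step you yourself flag as delicate: the uniform $\L^\infty(0,T;W^{2,p}(\Omega))$ bound for all $p>1$ does not follow from the proposed bootstrap, and the fix you suggest (feeding in the $t$-uniform $\H^1$ bound on $w_{i,\kappa}$ instead of the gradient bound on $U_\kappa$) does not repair it, because the obstruction sits in the integrability of $U_\kappa$ itself, not of $\nabla w_{i,\kappa}$. The only control on $U_\kappa$ that is uniform in $t$ is the $\bH=\L^2(\Omega)$ bound; the $\L^6(\Omega)$ (for $d=3$) or $\L^q(\Omega)$, $q<\infty$ (for $d=2$) bounds needed to even start the elliptic bootstrap are proportional to $\|\nabla U_\kappa(t)\|_{\L^2(\Omega)}$, which is only square-integrable in time. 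Hence for a.e.\ fixed $t$ one can only assert $U_\kappa(t)\cdot\nabla w_{i,\kappa}(t)\in\L^1(\Omega)$ uniformly in $t$, which does not yield any $W^{2,p}$ regularity with $p>1$ uniformly in $t$; running the iteration for a.e.\ $t$ instead produces at best bounds of the type $\L^2(0,T;W^{2,p}(\Omega))$, and in three dimensions the iteration additionally saturates at $p<6$ because $U_\kappa(t)$ never improves beyond $\L^6(\Omega)$. To reach the statement as written one needs either higher regularity of $U_\kappa$ uniformly in time (e.g.\ $\L^\infty(0,T;\bV)$, which the energy method does not provide) or a reformulation of the claimed space for $w_{i,\kappa}$; as it stands, your proof establishes the $\L^2(0,T;\bV)$ and $\L^1(0,T;\bV^\prime)$ parts of the lemma and a uniform $\L^\infty(0,T;\H^1(\Omega))$ bound for $w_{i,\kappa}$, but not the full $\L^\infty(0,T;W^{2,p}(\Omega))$ assertion.
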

     
     \begin{Lemma}\label{lem-classic:est}
     	The solution $({n}_{1, \kappa},{n}_{2,\kappa})_\kappa$ to problem (\ref {S1-u}) satisfies
     	\begin{itemize}
     		\item[(i)]\quad There exists a constant $M> 0$ such that
     		$$
     		0\le{n}_{1, \kappa}(t,x),{n}_{2,\kappa}(t,x)\le M \mbox{ for a.e. } (t,x)\in \Om_T.
     		$$
     		\item[(ii)]\quad The sequence $({n}_{1, \kappa},{n}_{2,\kappa})_\kappa$ is bounded in
     		$L^2(0,T;H^1(\Omega,R^2))\cap L^\infty(0,T;L^2(\Omega,\R^2))$.
     		\item[(iii)]\quad The sequence $(U_\kappa)_\kappa$ is bounded in
     		$L^2(0,T;H^1(\Omega,R^3))\cap L^\infty(0,T;L^2(\Omega,\R^3))$.
     		\item[(iv)]\quad The
     		sequence $({n}_{1, \kappa},{n}_{2,\kappa})_\kappa$ is relatively compact in $L^2(\Om_T,\R^2)$.
     		\item[(v)]\quad The sequence $(U_{\kappa})_\kappa$ is relatively compact in $\L^2(\Om_T)$.
     	\end{itemize}
     \end{Lemma}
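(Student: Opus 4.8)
\emph{Overview.} The five assertions will be proved, in order, by a truncation argument (for the nonnegativity and the $L^\infty$-bound), by testing with the unknowns themselves (for the energy bounds), and by the Aubin--Lions--Simon compactness lemma (for the two compactness statements). Two facts are used throughout. First, $\eps>0$ being fixed, the truncations $\chi_{i,\eps}$ and $F_{i,\eps}$ are globally bounded by $1/\eps$. Second, by Lemma~\ref{lem-reg1} the sequences $(U_\kappa)_\kappa$ and $(w_{i,\kappa})_\kappa$ are bounded, uniformly in $\kappa$, in $\L^2(0,T;\bV)$ and $L^\infty(0,T;W^{2,p}(\Omega))$ for every $p>1$; choosing $p>d$ and using $W^{2,p}(\Omega)\hookrightarrow W^{1,\infty}(\Omega)$, the gradients $\nabla w_{i,\kappa}$ are bounded in $L^\infty(\Omega_T)$ uniformly in $\kappa$. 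I expect assertion (i) to be the only genuinely technical point; (ii)--(v) then follow by standard arguments.

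\emph{Step 1: assertion (i).} Nonnegativity is obtained by testing the $i$-th equation of \eqref{S1-u} with the negative part $n_{i,\kappa}^-:=\min(n_{i,\kappa},0)$: the convective term vanishes since $\vdiv U_\kappa=0$ and $U_\kappa=\bzeros$ on $\partial\Omega$; the chemotactic term is absorbed into the diffusive one by Young's inequality, using the structural condition $\chi_i(0)=0$ together with the uniform $L^\infty$-bound on $\nabla w_{i,\kappa}$; and the reaction term $\int_\Omega F_{i,\eps}(n_{1,\kappa},n_{2,\kappa})\,n_{i,\kappa}^-\,dx$ vanishes because of the extension of $F_{i,\eps}$ to negative arguments and the sign conditions $F_1(0,\cdot)\ge0$, $F_2(\cdot,0)\ge0$ from \eqref{Est:seconde}. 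Gronwall's lemma together with $n_{i,0}\ge0$ then gives $n_{i,\kappa}^-\equiv 0$. For the upper bound we perform a Stampacchia truncation: testing with $(n_{i,\kappa}-k)^+$ for $k\ge\|n_{i,0}\|_{L^\infty(\Omega)}$, using $d_i\ge\underline{d_i}>0$ from \eqref{assump:diffusion}, $|\chi_{i,\eps}|\le1/\eps$, $|F_{i,\eps}|\le1/\eps$ and the uniform bound on $\nabla w_{i,\kappa}$, and absorbing the chemotactic term, one is led to
\begin{equation*}
\frac{d}{dt}\|(n_{i,\kappa}-k)^+\|_{L^2(\Omega)}^2+\underline{d_i}\,\|\nabla(n_{i,\kappa}-k)^+\|_{L^2(\Omega)}^2\le C(\eps)\Big(|A_k(t)|+\|(n_{i,\kappa}-k)^+\|_{L^2(\Omega)}\,|A_k(t)|^{1/2}\Big),
\end{equation*}
where $A_k(t):=\{x\in\Omega:\ n_{i,\kappa}(t,x)>k\}$ and $C(\eps)$ does not depend on $\kappa$. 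Integrating in time, invoking the parabolic interpolation $L^\infty(0,T;L^2(\Omega))\cap L^2(0,T;H^1(\Omega))\hookrightarrow L^{2(d+2)/d}(\Omega_T)$ and the classical iteration lemma for super-level sets, one obtains a level $M\ge\|n_{i,0}\|_{L^\infty(\Omega)}$, independent of $\kappa$, such that $(n_{i,\kappa}-M)^+\equiv0$; this is the constant $M$ entering the definition of $\A$. The crucial point is that $M$ is $\kappa$-independent precisely because $\nabla w_{i,\kappa}$, $U_\kappa$ and $F_{i,\eps}$ are bounded uniformly in $\kappa$ (by Lemma~\ref{lem-reg1} and the regularization), and this is the step I expect to require the most care.

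\emph{Step 2: assertions (ii) and (iii).} Given (i), the bound $0\le n_{i,\kappa}\le M$ yields the $L^\infty(0,T;L^2)$ part of (ii) at once. Testing the $i$-th equation of \eqref{S1-u} with $n_{i,\kappa}$, the convective term again cancels, the chemotactic term is estimated by $\tfrac{\underline{d_i}}{2}\|\nabla n_{i,\kappa}\|_{L^2(\Omega)}^2+C(\eps)$, and $|\iint_{\Omega_T}F_{i,\eps}\,n_{i,\kappa}|\le\eps^{-1}M|\Omega|T$; hence $\underline{d_i}\iint_{\Omega_T}|\nabla n_{i,\kappa}|^2\le C$ uniformly in $\kappa$, which completes (ii). Assertion (iii) is the standard energy estimate for \eqref{S1-v}: testing with $U_\kappa$, the trilinear term $\iint_{\Omega_T}(U_\kappa\cdot\nabla)U_\kappa\cdot U_\kappa$ vanishes (since $\vdiv U_\kappa=0$ and $U_\kappa=\bzeros$ on $\partial\Omega$), while the buoyancy term is controlled by $|Q(\overline{n}_{1,\kappa},\overline{n}_{2,\kappa})|\le C_Q(1+2M)$ (by \eqref{assump-G} and $(\overline{n}_{1,\kappa},\overline{n}_{2,\kappa})\in\A$) together with $\nabla\phi\in(L^{d+2}(\Omega))^d\subset(L^2(\Omega))^d$; a Gronwall argument gives the bound of $U_\kappa$ in $L^\infty(0,T;L^2)\cap L^2(0,T;H^1_0)$ (this is essentially contained in Lemma~\ref{lem-reg1}).

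\emph{Step 3: assertions (iv) and (v).} Both are instances of Aubin--Lions--Simon. For $(n_{1,\kappa},n_{2,\kappa})_\kappa$: it is bounded in $L^2(0,T;H^1(\Omega))$ by (ii), and from \eqref{S1-u}, after rewriting $U_\kappa\cdot\nabla n_{i,\kappa}=\vdiv(n_{i,\kappa}U_\kappa)$, the bounds $0\le n_{i,\kappa}\le M$, $U_\kappa\in L^2(0,T;L^6(\Omega))$, $\underline{d_i}\le d_i\le\bar{d_i}$, $|\chi_{i,\eps}|\le1/\eps$, $|F_{i,\eps}|\le1/\eps$ and (ii) show that $\partial_t n_{i,\kappa}$ is bounded in $L^2(0,T;(H^1(\Omega))^\prime)$; since $H^1(\Omega)\hookrightarrow\hookrightarrow L^2(\Omega)\hookrightarrow(H^1(\Omega))^\prime$, relative compactness of $(n_{1,\kappa},n_{2,\kappa})_\kappa$ in $L^2(\Omega_T)$ follows. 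For $(U_\kappa)_\kappa$: by Lemma~\ref{lem-reg1} it is bounded in $\L^2(0,T;\bV)$ with $\partial_t U_\kappa$ bounded in $\L^1(0,T;\bV^\prime)$, and $\bV\hookrightarrow\hookrightarrow\bH\hookrightarrow\bV^\prime$, so Simon's compactness theorem (which allows an $L^1$-in-time bound on the time derivative) gives relative compactness of $(U_\kappa)_\kappa$ in $\L^2(\Omega_T)$.
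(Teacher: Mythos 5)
Your proposal is correct and, for assertions (ii)--(v) and for the nonnegativity part of (i), follows essentially the same route as the paper: the convective terms are killed by $\vdiv U_\kappa=0$, the energy bounds come from testing with $n_{i,\kappa}$ and $U_\kappa$, the time derivatives are bounded in $(H^1)'$ resp.\ $\bV'$, and compactness is obtained from Aubin--Lions--Simon (the paper, like you, invokes the $L^1$-in-time version for $\partial_t U_\kappa$). The genuine divergence is in the $L^\infty$ upper bound of (i). You run a Stampacchia/De Giorgi truncation on the level sets $A_k$, leaning on the crude bounds $|\chi_{i,\eps}|\le 1/\eps$, $|F_{i,\eps}|\le 1/\eps$ and on $\nabla w_{i,\kappa}\in L^\infty$ via $W^{2,p}\hookrightarrow W^{1,\infty}$ ($p>d$). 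The paper instead first derives an $L^\infty(0,T;L^1)$ bound by integrating the equation and exploiting the Lotka--Volterra structure ($F_{i,\eps}\le a_i n_i$ for $n_i\ge0$), then bootstraps to uniform $L^p$ bounds by testing with $n_{1,\kappa}^{p-1}$ and controlling $\int w_{1,\kappa}^{p+1}$ through the elliptic equation for $w_{1,\kappa}$, and finally applies the Moser--Alikakos iteration. Your argument is shorter and avoids the (somewhat opaque) exponents $m_i$ appearing in the paper's computation, but it buys the bound at the price of a constant $M=M(\eps)$: the paper's iteration is designed to be uniform in $\eps$, which is what later justifies \eqref{eq:maxprinc} with an $\eps$-independent $M$ when the regularization is removed. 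For the lemma as stated (fixed $\eps$, limit in $\kappa$) your proof is adequate, but you should be aware that the same estimate is reused in Section~\ref{Sect:proof-weak}, where the $\eps$-dependence of your constant would be a real obstruction; if you wanted your truncation argument to serve there too, you would have to replace $|F_{i,\eps}|\le1/\eps$ by the sign structure $F_{i,\eps}\le a_i n_i$ and handle $\chi_{i,\eps}$ through a growth hypothesis on $\chi_i$ rather than through the $1/\eps$ cutoff.
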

     
     \begin{proof}
     	$(i)$ {\bf Nonnegativity.}
     	Multiplying (\ref {S1-u}) by $-n_{i,\kappa}^-=\displaystyle\frac{n_{i,\kappa}-\abs{n_{i,\kappa}}}{2}$ and
     	integrating over $\Omega$, we get
     	\begin{eqnarray}\begin{array}{ll}\displaystyle
     			&\displaystyle\frac{1}{2}\frac{d}{\,dt}\int_\Omega \abs{n_{i,\kappa}^-}^2  \,dx
     			- \int_\Omega U_\kappa \cdot\nabla n_{i,\kappa} \, n_{i,\kappa}^-  \,dx
     			+\underline{d}_i\int_\Omega \abs{\nabla n_{i,\kappa}^-}^2 \,dx\\
     			& \displaystyle\qquad \qquad \qquad \qquad=\int_\Omega \chi_{\eps}(n_{i,\kappa})\nabla w_{i,n} \cdot \nabla n_{i,\kappa}^-  \,dx-\int_\Omega F_{i,\eps}({n}_{1,\kappa},{n}_{2,\kappa}) n_{i,\kappa}^-  \,dx,
     	\end{array}\end{eqnarray}
     	for $i=1,2$. Recall that $\vdiv U_\kappa=0$ in $\Om_T$ and $U_\kappa=0$ on $\Sigma_T$, so we have for $i=1,2$
     	$$
     	\int_\Omega U_\kappa \cdot\nabla n_{i,\kappa}\, n_{i,\kappa}^-  \,dx
     	=\frac{1}{2}\int_\Omega U_\kappa \cdot \nabla  \abs{n_{i,\kappa}}^2\,dx
     	=-\frac{1}{2}\int_\Omega \vdiv U_\kappa \, \abs{n_{i,\kappa}}^2\,dx
     	+\frac{1}{2}\int_{\partial \Omega}\abs{n_{i,\kappa}}^2  U_\kappa \cdot \eta d\sigma=0.
     	$$
     	Using this and since $\chi_\eps(s)=0$, $F_{i,\eps}(s_1,s_2)=0$ for $s,s_1\le 0$, $s_2\in
     	{\R}$, and according to the positivity of the third term of the
     	left-hand side, we obtain
     	$$
     	\frac{1}{2}\frac{d}{\,dt}\int_\Omega \abs{n_{i,\kappa}^-}^2\,dx\le 0\qquad\text{for $i=1,2$.}
     	$$
     	Since the data $n_{i,0}$ is is nonnegative, we deduce that $n_{i,\kappa}^-=0$ for $i=1,2$.\\
     	
     	\noindent {\bf Boundedness in $L^1$ and $L^\infty$.}
     	To obtain the $L^1$ bound of $n_{i,\kappa}$ for $i=1,2$, we integrate the equation (\ref{S1-u}) over $\Omega$, to deduce
     	\begin{equation}\begin{split}\label{L1-ineq1}
     			\displaystyle\frac{d}{\,dt}\sum_{i=1,2}\int_\Omega n_{i,\kappa}  \,dx&=\sum_{i=1,2}\int_\Omega F_{i,\eps}({n}_{1,\kappa},{n}_{2,\kappa})   \,dx\\
     			&\leq \sum_{i=1,2} \int_\Omega {n}_{i,\kappa}(a_i-b_i\,{n}_{1,\kappa}-c_i\,{n}_{2,\kappa})   \,dx\\
     			&\leq \sum_{i=1,2} a_i \int_\Omega {n}_{i,\kappa} \,dx\\
     			&\leq \max\{a_1,a_2\}\sum_{i=1,2} \int_\Omega {n}_{i,\kappa} \,dx
     	\end{split}\end{equation}
     	where we have used the nonnegativity of $n_{i,\kappa}$ and
     	$$
     	\int_\Omega U_\kappa \cdot \nabla  n_{i,\kappa}  \,dx=-\int_\Omega n_{1,\kappa}\,\Div U_\kappa  \,dx=0,
     	$$
     	for $i=1,2$. An application of Gr\"{o}nwall inequality to \eqref{L1-ineq1}, we obtain for $i=1,2$
     	\begin{equation}\begin{split}\label{L1-ineq2}
     			\norm{{n}_{i,\kappa}}_{L^\infty(0,T;L^1(\Om))} \leq C,
     	\end{split}\end{equation}
     	for some constant $C>0$.\\
     	
     	\noindent In the next step we prove $L^\infty$ bound of $n_{i,\kappa}$ for $i=1,2$.
     	We multiply (\ref{S1-u}) for $i=1$ by $(n_{i,\kappa})^{p-1}$ and integrate over $\Omega$. The result is
     	\begin{equation}
     		\begin{split}\label{eq2:est-class}
     			&\displaystyle\frac{1}{p}\frac{d}{\,dt}\int_\Omega
     			\abs{n_{1,\kappa}}^p  \,dx
     			+(p-1)\underline{d}_1\int_\Omega (n_{1,\kappa})^{p-2}\abs{\nabla  n_{1,\kappa}}^{2}\,\,dx
     			+\frac{p-1}{p+m_1-1}\int_\Omega \nabla  w_{1,\kappa}\cdot \nabla  (n_{1,\kappa})^{p+m_1-1}\,\,dx\\
     			&\qquad \qquad \qquad \qquad +b_1 \int_\Omega (n_{1,\kappa})^{p+1}\,\,dx\\
     			&\quad \leq \int_\Omega \pt n_{1,\kappa} \,(n_{1,\kappa})^{p-1}  \,dx
     			+\int_\Omega U_\kappa \cdot \nabla  n_{1,\kappa}\, (n_{1,\kappa})^{p-1}  \,dx
     			+\int_\Omega d_1(n_{1,\kappa})\nabla  n_{1,\kappa}\cdot \nabla   (n_{1,\kappa})^{p-2} \,\,dx\\
     			&\qquad \qquad \qquad \qquad 
     			+\int_\Omega \chi_{1,\eps}(n_{1,\kappa})\nabla  w_{1,\kappa}\cdot \nabla  (n_{1,\kappa})^{p-1}\,\,dx
     			++b_1 \int_\Omega (n_{1,\kappa})^{p+1}\,\,dx\\
     			&\quad =\int_\Omega F_{1,\eps}({n}_{1,\kappa},{n}_{2,\kappa})  (n_{1,\kappa})^{p-1}\,\,dx
     			+b_1 \int_\Omega (n_{1,\kappa})^{p+1}\,\,dx\leq a_1 \int_\Omega (n_{1,\kappa})^{p}\,\,dx.
     	\end{split}\end{equation}
     	Herein, we have used
     	$$
     	\int_\Omega U_\kappa\cdot \nabla  n_{1,\kappa}\, (n_{1,\kappa})^{p-1}  \,dx
     	=\frac{1}{p}\int_\Omega U_\kappa\cdot \nabla  (n_{1,\kappa})^{p}  \,dx
     	=-\frac{1}{p}\int_\Omega \Div \, U_\kappa\,(n_{1,\kappa})^{p}  \,dx=0.
     	$$
     	We observe that
     	\begin{equation}\label{esti-Gagl1}
     		\int_\Omega (n_{1,\kappa})^{p-2}\abs{\nabla n_{1,\kappa}}^{2}\,\,dx
     		=\frac{4(p-1)}{p^2}\int_\Omega \abs{\nabla (n_{1,\kappa})^{\frac{p}{2}}}^{2}\,dx.
     	\end{equation}
     	Moreover, from the equation of $w_{1,\kappa}$ in (\ref{S1-u}) and $\Div\, U_\kappa=0$ in $\Om_T$, we deduce
     	\begin{equation}
     		\begin{split}\label{esti-Gagl2}
     			\int_\Omega \nabla w_{1,\kappa}\cdot \nabla (n_{1,\kappa})^{p+m_1-1}\,\,dx
     			&=-\alpha_1\int_\Omega {w}_{1,\kappa}(n_{1,\kappa})^{p+m_1-1}\,\,dx
     			+\beta_1\int_\Omega \overline{n}_{1,\kappa}\,(n_{1,\kappa})^{p+m_1-1}\,\,dx\\
     			&\geq -\alpha_1\int_\Omega {w}_{1,\kappa}(n_{1,\kappa})^{p+m_1-1}\,\,dx.
     	\end{split}\end{equation}
     	Now, we use \eqref{esti-Gagl1}-\eqref{esti-Gagl2} and Young inequality to deduce from \eqref{eq2:est-class}
     	(recall that $1 \leq m_i<2$ for $i=1,2$)
     	\begin{equation}
     		\begin{split}\label{esti-Gagl3}
     			&\displaystyle\frac{d}{\,dt}\int_\Omega\abs{n_{1,\kappa}}^p  \,dx
     			+\frac{4(p-1)}{p}\int_\Omega \abs{\nabla (n_{1,\kappa})^{\frac{p}{2}}}^{2}\,dx
     			+b_1 \int_\Omega (n_{1,\kappa})^{p+1}\,\,dx\\
     			&\qquad \qquad \qquad \leq a_1 p \int_\Omega (n_{1,\kappa})^{p}\,\,dx
     			+\frac{\alpha_1p(p-1)}{p+m_1-1}\int_\Omega {w}_{1,\kappa}(n_{1,\kappa})^{p+m_1-1}\,\,dx\\
     			&\qquad \qquad \qquad \leq  C(a_1,p,m_1)\Bigl(1+\theta/2 \int_\Omega\abs{n_{1,\kappa}}^{p+1}  \,dx
     			+\theta/2 \int_\Omega\abs{n_{1,\kappa}}^{p+1}  \,dx+\int_\Omega\abs{w_{1,\kappa}}^{\frac{p+1}{2-m_1}}  \,dx\Bigl)\\
     			&\qquad \qquad \qquad \leq  C(a_1,p,m_1)\Bigl(1+\theta \int_\Omega\abs{n_{1,\kappa}}^{p+1}  \,dx
     			+\int_\Omega\abs{w_{1,\kappa}}^{p+1}  \,dx\Bigl),
     	\end{split}\end{equation}
     	where $C(a_1,p,m_1)>0$ is a constant depending on $a_1,\,p$ and $m_1$.
     	An application of Gagliardo-Nirenberg-Sobolev inequality and Young inequality to
     	$\displaystyle\int_\Omega\abs{n_{1,\kappa}}^{p+1}  \,dx$ and
     	$\displaystyle \int_\Omega\abs{n_{1,\kappa}}^{p}  \,dx$, respectively, we get from \eqref{esti-Gagl3} and \eqref{L1-ineq2}
     	\begin{equation}
     		\begin{split}\label{esti-Gagl4}
     			&\displaystyle\frac{d}{\,dt}\int_\Omega\abs{n_{1,\kappa}}^p  \,dx
     			+\frac{4(p-1)}{p}\int_\Omega \abs{\nabla (n_{1,\kappa})^{\frac{p}{2}}}^{2}\,dx
     			+b_1 \int_\Omega (n_{1,\kappa})^{p+1}\,\,dx\\
     			&\qquad  \leq  C(a_1,p,m_1,\abs{\Om})\Biggl(1+\theta \int_\Omega\abs{n_{1,\kappa}}\, dx
     			\times \Bigl(\int_\Omega\abs{n_{1,\kappa}}^{p}  \,dx
     			+\int_\Omega \abs{\nabla (n_{1,\kappa})^{\frac{p}{2}}}^{2}\,dx\Bigl)
     			+\int_\Omega\abs{w_{1,\kappa}}^{p+1}  \,dx\Biggl)\\
     			&\qquad  \leq \tilde{C}(a_1,p,m_1,\abs{\Om})\Biggl(1+\theta \int_\Omega\abs{n_{1,\kappa}}^{p}  \,dx
     			+\theta \int_\Omega \abs{\nabla (n_{1,\kappa})^{\frac{p}{2}}}^{2}\,dx
     			+\int_\Omega\abs{w_{1,\kappa}}^{p+1}  \,dx\Biggl),
     	\end{split}\end{equation}
     	for some constants $C(a_1,p,m_1,\abs{\Om}), \tilde{C}(a_1,p,m_1,\abs{\Om})>0$ depending on
     	$a_1,\,p,\,m_1$ and $\abs{\Om}$. We choose $\theta$ sufficiently small to deduce from \eqref{esti-Gagl4}
     	\begin{equation}
     		\begin{split}\label{esti-Gagl4b}
     			&\displaystyle\frac{d}{\,dt}\int_\Omega\abs{n_{1,\kappa}}^p  \,dx
     			+C \int_\Omega (n_{1,\kappa})^{p+1}\,\,dx \leq \tilde{C}(a_1,p,m_1,\abs{\Om})
     			\Bigl(1+\int_\Omega\abs{w_{1,\kappa}}^{p+1}  \,dx\Bigl),
     	\end{split}\end{equation}
     	for some constant $C>0$. To control the integral in the right-side, we multiply the equation of $w_{1,\kappa}$ in (\ref{S1-u}) by $(w_{1,\kappa})^{p-1}$, we use $\Div \, U_\kappa=0$ in $\Om_T$ and Gagliardo-Nirenberg-Sobolev inequality to get
     	\begin{equation}
     		\begin{split}\label{esti-Gagl5}
     			\int_\Omega (w_{1,\kappa})^{p+1}\,\,dx
     			& \leq C(p,\abs{\Om})
     			\Biggl(\int_\Omega\abs{n_{1,\kappa}}\, dx \times \Bigl(\int_\Omega\abs{n_{1,\kappa}}^{p}  \,dx
     			+\int_\Omega \abs{\nabla (n_{1,\kappa})^{\frac{p}{2}}}^{2}\,dx\Bigl)\Biggl)\\
     			&\leq C(p,\abs{\Om},\beta_1)\Bigl(\int_\Omega\abs{n_{1,\kappa}}^{p}  \,dx
     			+\int_\Omega \overline{n}_{1,\kappa}\,({w}_{1,\kappa})^{p-1}\,\,dx\Bigl)\\
     			&\leq \tilde{C}(p,\abs{\Om},\beta_1)\Biggl(\int_\Omega\abs{n_{1,\kappa}}^{p}  \,dx
     			+\int_\Omega ({w}_{1,\kappa})^{p}\,\,dx\Biggl)\\
     			&\leq \hat{C}(p,\abs{\Om},\beta_1)\Biggl(1+\int_\Omega\abs{n_{1,\kappa}}^{p}  \,dx
     			+\theta\int_\Omega ({w}_{1,\kappa})^{p}\,\,dx\Biggl),
     	\end{split}\end{equation}
     	for some constants $C,\tilde{C},\hat{C}>0$. Again we $\theta$ sufficiently small to obtain from \eqref{esti-Gagl5}
     	\begin{equation}
     		\label{esti-Gagl6}
     		\int_\Omega (w_{1,\kappa})^{p+1}\,\,dx
     		\leq {C}(p,\abs{\Om},\beta_1)\Bigl(1+\int_\Omega\abs{n_{1,\kappa}}^{p}  \,dx\Bigl),
     	\end{equation}
     	for some constant $C>0$.\\
     	Observe that from \eqref{esti-Gagl4b} and \eqref{esti-Gagl6}, we deduce
     	\begin{equation}\label{esti-Gagl7}
     		\displaystyle\frac{d}{\,dt}\int_\Omega\abs{n_{1,\kappa}}^p  \,dx
     		+C \int_\Omega (n_{1,\kappa})^{p+1}\,\,dx \leq {C}(a_1,p,m_1,,\beta_1,\abs{\Om})
     		\Bigl(1+\int_\Omega\abs{n_{1,\kappa}}^{p}  \,dx\Bigl),
     	\end{equation}
     	for some constant $C>0$. Therefore an application of Gr\"onwall inequality, we arrive to
     	\begin{equation}\label{esti-Gagl7b}
     		\norm{n_{1,\kappa}}_{L^p(\Om)} \leq C(a_1,p,m_1,\beta_1,\abs{\Om})\quad \text{for all $t\in (0,T)$,}
     	\end{equation}
     	for some constant $C>0$. The consequence of \eqref{esti-Gagl7b} and the well-known
     	Moser–Alikakos iteration procedure (see for e.g. \cite{Alik79}) is the uniform $L^\infty$-bound
     	\begin{equation}\label{esti-Gagl8}
     		\norm{n_{1,\kappa}}_{L^\infty(\Om)} \leq C(a_1,p,m_1,,\beta_1,\abs{\Om})\quad \text{for all $t\in (0,T)$,}
     	\end{equation}
     	for some constant $C>0$.
     	
     	$(ii)$ We multiply the equation (\ref {S1-u}) by $n_{i,\kappa}$ and integrate
     	over $\Omega$ to obtain
     	\begin{eqnarray}\label{eq1:2est-class}
     		\begin{array}{ll}\displaystyle
     			\frac{1}{2}\frac{d}{\,dt}\int_\Omega \abs{n_{i,\kappa}}^2  \,dx 
     			& \displaystyle+ \int_\Omega U_\kappa \cdot\nabla n_{i,\kappa} \, n_{i,\kappa}  \,dx
     			+\displaystyle \underline{d}_i\int_\Omega
     			\abs{\nabla n_{i,\kappa}}^2  \,dx \\\\
     			&\displaystyle =\int_\Omega \chi_{\eps}(n_{i,\kappa})\nabla w_{i,n} \cdot \nabla n_{i,\kappa}  \,dx-\int_\Omega F_{i,\eps}({n}_{1,\kappa},{n}_{2,\kappa}) n_{i,\kappa} \,dx.
     	\end{array}\end{eqnarray}
     	Exploiting the boundedness of $n_{i,\kappa}$and $\Div \, U_\kappa=0$ in $\Om_T$, we get  
     	$$
     	\int_\Omega U_\kappa \cdot\nabla n_{i,\kappa} \, n_{i,\kappa}  \,dx=0,
     	$$
     	and that the second and the third integrals of the right-hand side are bounded
     	independently of $\kappa$, for $i=1,2$ . Then by Young inequality
     	\begin{equation}
     		\displaystyle\frac{1}{2}\frac{d}{\,dt}\int_\Omega \abs{n_{i,\kappa}}^2  \,dx +C_2
     		\int_\Omega \abs{\nabla n_{i,\kappa}}^2  \,dx \le C_3,
     	\end{equation}
     	for some constants $C_2,C_3>0$ independent of $\kappa$. This completes
     	the proof of $(ii)$.
     	
     	$(iii)$ In this step, we multiply the equation (\ref {S1-v}) by $U_{\kappa}$ and integrate
     	over $\Omega$ to obtain
     	\begin{equation}\label{est-fluid1}
     		\begin{array}{l}
     			\displaystyle	\frac{1}{2} \int_\Omega |U_{\kappa}(\tau,x)|^2 \,dx +\nu \int_0^\tau   \int_\Omega | \nabla U_{\kappa}|^2 \,dx\,dt  + \int_0^\tau \int_{\Omega} (U_{\kappa}\cdot \nabla) U_{\kappa}\cdot U_{\kappa} \,dx \,dt \\
     			\displaystyle\hskip4cm + \int_0^\tau \int_{\Omega} Q\big(\overline{n}_{1,\kappa},\overline{n}_ {2,\kappa}\big) \nabla\phi \cdot U_{\kappa}  \,dx \,dt=\frac{1}{2} \int_\Omega |U_{0}(x)|^2 \,dx.
     		\end{array}
     	\end{equation}
     	Observe that, since $\vdiv U_{\kappa}=0$ and $U_{\kappa}=\bzeros$ on $\partial \Omega$, we get
     	$$ \displaystyle \int_\Omega  (U_{\kappa}\cdot\nabla) U_{\kappa} \cdot U_{\kappa} \,dx= \frac{1}{2}\int_\Omega\nabla (U_{\kappa})^2 U_{\kappa}\,dx=-\frac{1}{2}\int_\Omega\vdiv(U_{\kappa}) (U_{\kappa})^2 \,dx+\frac{1}{2}\int_{\partial\Omega} U_{\kappa}\,{(U_{\kappa})^2}^T\eta=\bzeros.$$
     	Using this to deduce from \eqref{est-fluid1}	
     	\begin{equation}\label{granwv}
     		\begin{array}{l}
     			\displaystyle  \frac{1}{2} \int_\Omega |U_{\kappa}(\tau, x)|^2 \,dx  + \displaystyle  \nu \int_0^\tau   \int_\Omega | \nabla U_{\kappa}|^2 \,dx\,dt       \displaystyle  \leq  \frac{1}{2} \int_\Omega |U_{0}(\bx)|^2 \,dx-  \int_0^\tau \int_{\Omega} Q\big(\overline{n}_{1,\kappa},\overline{n}_ {2,\kappa}\big) \nabla\phi \cdot U_{\kappa}  \,dx \,dt .
     		\end{array}
     	\end{equation}
     	Using  (\ref{assump-G}) and Young inequality, we have
     	$$\begin{array}{ll}
     		\displaystyle I:&=\displaystyle\Big| \int_0^\tau\int_{\Omega} Q\big(\overline{n}_{1,\kappa},\overline{n}_ {2,\kappa}\big) \nabla\phi \cdot U_{\kappa}  \,dx \,dt  \Big| \\
     		&\leq \displaystyle C_Q\Big(|\Omega_\tau|+ \int_0^\tau\int_{\Omega}\abs{ \nabla\phi \cdot U_{\kappa} } \,dx \,dt\Big)  \\
     		\displaystyle  &\leq\displaystyle C_Q\Big(|\Omega_\tau|+\frac{1}{2}\int_0^\tau\int_{\Omega}| \nabla\phi|^{d+2}\,dx\,dt+\frac{1}{2}\int_0^\tau\int_{\Omega}{|U_{\kappa}|}^2\,dx\,dt\Big).
     	\end{array}
     	$$
     	Using this and exploiting the assumption $ \nabla\phi\in (\L^{2}(\Omega))^d$ to deduce from (\ref{granwv})
     	\begin{equation} \begin{split}\label{granwvv}
     			\displaystyle  \frac{1}{2} \int_\Omega |U_{\kappa}(\tau,\bx)|^2 \,dx  + \displaystyle  \nu \int_0^\tau   \int_\Omega | \nabla U_{\kappa}|^2 \,dx\,dt       & \leq  \frac{C_Q}{2}\int_0^\tau\int_{\Omega}{|U_{\kappa} (t,\bx)|}^2\,dx\,dt
     			\\&\qquad+ \frac{1}{2} \int_\Omega |U_{0}(\bx)|^2 \,dx+   \tilde{C_Q}(T, \bu_0,|\Omega|,\phi).
     	\end{split}\end{equation}
     	An application of Gronwall's inequality, we obtain 
     	$$\displaystyle  \int_\Omega |U_{\kappa}(\tau,\bx)|^2 \,dx \le C\quad \text{for all }\tau \in(0,T),$$
     	for some constant $C>0$.
     	Consequently, we deduce from this and (\ref{granwvv})
     	\begin{equation}\label{EstComp}
     		\displaystyle  \max_{0<\tau<T}   \ \int_\Omega |U_{\kappa}(\tau,\bx)|^2 \,dx +\nu \int_0^T\int_{\Omega}  | \nabla U_{\kappa}|^2 \,dx\le C,
     	\end{equation}
     	for some constant $C>0$.
     	Therefore,  we deduce that $U_{\kappa}$ is uniformly bounded in $\displaystyle   \L^\infty\big(0,T; \bH \big)  \cap \L^2\Big(0,T; \bV\Big)$.
     	
     	$(iv)$ We multiply the equation (\ref {S1-u}) by $\varphi \in
     	L^2(0,T;H^1(\Omega))$ and we use the boundedness of $n_{i,\kappa}$ in $L^\infty$ and $U_\kappa$ in $L^2$,
     	the result is
     	\begin{eqnarray}
     		\begin{array}{ll}\displaystyle
     			\abs{\int_0^T\left \langle \partial_t n_{i,\kappa},\varphi\right \rangle \,dt}
     			&\displaystyle \leq \abs{\iint_{\Om_T} \varphi\, U_\kappa \cdot n_\kappa\,dx\,dt}
     			+\abs{\iint_{\Om_T} d_i(n_{i,\kappa})\, \nabla  n_{i,\kappa}\cdot \nabla  \varphi\,dx\,dt}\\
     			&\qquad \displaystyle +\abs{\iint_{\Om_T} \chi_{i,\eps}(n_{i,\kappa})\, \nabla  w_{i,\kappa}\cdot\nabla  \varphi\,dx\,dt}
     			+\abs{\iint_{\Om_T} F_{i,\eps}(n_{1,\kappa},n_{1,\kappa})\,  \varphi\,dx\,dt} \\
     			& \le C_4 \norm{U_\kappa}_{L^2(\Om_T)}\norm{\nabla  \varphi}_{L^2(\Om_T)}
     			+\overline{d} \norm{\nabla n_{i,\kappa}}_{L^2(\Om_T)} \left\|{\nabla \varphi}\right \|_{L^2(\Omega_T)}\\
     			&\qquad +\norm{\chi_{i,\eps}(n_{i,\kappa})}_{L^\infty(\Om_T)}
     			\left\|{\nabla w_{i,\kappa}}\right \|_{L^2(\Om_T)}
     			\left\|{\nabla \varphi}\right \|_{L^2(\Om_T)}\\
     			&\qquad \qquad \displaystyle + C_5\sum_{i=1,2}\norm{n_{i,\kappa}}_{L^2(\Om_T)} \norm{\varphi}_{L^2(\Om_T)}\\
     			& \le C_6\norm{\varphi}_{L^2(0,T;H^1(\Omega))},
     		\end{array}
     	\end{eqnarray}
     	for $i=1,2$ and for some constants $C_4,C_5,C_6>0$ independent of $\kappa$ and $\eps$.  We obtain the
     	bound
     	\begin{equation}
     		\left\|{ \partial_t n_{i,\kappa}}\right \|_{L^2(0,T;(H^1(\Omega))')} \le C.
     	\end{equation}
     	Then, $(iv)$ is a consequence of $(ii)$,  the uniform boundedness of $(\partial_t n_{i,\kappa} )_\kappa$ in 
     	$L^2(0,T;(H^1(\Omega)')$ and Aubin-Simon compactness theorem (see for e.g. \cite{Aubin-Simon}). \\
     	
     	$(v)$ Finally, using Lemma \ref{lem-reg1} and again Aubin-Simon compactness theorem, the space
     	$$
     	\Bigl\{U_\kappa \in L^2(0,T;\V);\,\,\partial_t U_\kappa\in L^2(0,T;\V^\prime)\Bigl\}\quad 
     	\text{is compactly embedded in }L^2(0,T;\H).
     	$$
     	This concludes the proof of Lemma \ref{lem-classic:est}.
     \end{proof}
     
     Now we have the following classical result (see \cite{Ladyzenskaia:1968}). 
     \begin{Lemma}\label{lem-classic:est-bis}
     	There exists a function $w_i \in L^2(0,T;H^1(\Omega))$ such that the
     	sequence $(w_{i,\kappa})_\kappa$ converges strongly to $w_i$ in
     	$L^2(0,T;H^1(\Omega))$ for $i=1,2$.
     \end{Lemma}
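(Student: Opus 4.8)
\noindent\emph{Proof sketch.}
The plan is to prove that $(w_{i,\kappa})_\kappa$ is a Cauchy sequence in the complete space $L^2(0,T;H^1(\Omega))$; the existence of a strong limit $w_i$ then follows immediately. The three ingredients are: the uniform elliptic bounds of Lemma~\ref{lem-reg1} (which in particular give an $L^\infty$ control of the gradients $\nabla w_{i,\kappa}$), the strong $L^2(\Om_T)$-convergence of $(U_\kappa)_\kappa$ from Lemma~\ref{lem-classic:est}\,$(v)$, and the assumed strong $L^2(\Om_T)$-convergence of the data $(\overline{n}_{i,\kappa})_\kappa$.

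First I would record the uniform bound $\norm{\nabla w_{i,\kappa}}_{L^\infty(\Om_T)}\le C_w$: Lemma~\ref{lem-reg1} gives $(w_{i,\kappa})_\kappa$ bounded in $L^\infty(0,T;W^{2,p}(\Omega))$ for every $p>1$, and choosing $p>d$ together with the Sobolev embedding $W^{2,p}(\Omega)\hookrightarrow W^{1,\infty}(\Omega)$ gives the claim. Next, for fixed indices $\kappa,\mu$, I would set $z:=w_{i,\kappa}-w_{i,\mu}$ and subtract the two elliptic equations in \eqref{S1-v}, splitting the convective part as $U_\kappa\cdot\nabla w_{i,\kappa}-U_\mu\cdot\nabla w_{i,\mu}=U_\kappa\cdot\nabla z+(U_\kappa-U_\mu)\cdot\nabla w_{i,\mu}$, so that $z$ solves $-\Delta z+U_\kappa\cdot\nabla z+\alpha_i z=\beta_i(\overline{n}_{j,\kappa}-\overline{n}_{j,\mu})-(U_\kappa-U_\mu)\cdot\nabla w_{i,\mu}$ in $\Om_T$ with $\nabla z\cdot\eta=0$ on $\Sigma_T$, where $j$ is the index complementary to $i$. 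Testing with $z$ at a.e.\ fixed time and integrating over $\Omega$, the convection term $\int_\Omega (U_\kappa\cdot\nabla z)\,z\,dx$ drops out after integration by parts because $\vdiv U_\kappa=0$ in $\Om_T$ and $U_\kappa=\bzeros$ on $\partial\Omega$, just as for the convective terms handled earlier; what survives on the left is $\norm{\nabla z}_{L^2(\Omega)}^2+\alpha_i\norm{z}_{L^2(\Omega)}^2$. The right-hand side I would estimate by Cauchy--Schwarz, the uniform bound $\norm{\nabla w_{i,\mu}}_{L^\infty}\le C_w$ and Young's inequality, absorbing the resulting $\norm{z}_{L^2(\Omega)}^2$-contribution into the left-hand side. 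Integrating over $(0,T)$ yields
\[
\norm{w_{i,\kappa}-w_{i,\mu}}_{L^2(0,T;H^1(\Omega))}^2\le C\Big(\norm{\overline{n}_{j,\kappa}-\overline{n}_{j,\mu}}_{L^2(\Om_T)}^2+\norm{U_\kappa-U_\mu}_{L^2(\Om_T)}^2\Big)
\]
with $C$ depending only on $\alpha_i$, $\beta_i$ and $C_w$. Since $(\overline{n}_{j,\kappa})_\kappa$ and $(U_\kappa)_\kappa$ converge, hence are Cauchy, in $L^2(\Om_T)$, the right-hand side tends to $0$ as $\kappa,\mu\to\infty$, so $(w_{i,\kappa})_\kappa$ is Cauchy in $L^2(0,T;H^1(\Omega))$ and converges strongly to some $w_i$ in that space (which one also identifies, passing to the limit in the weak formulation, as the solution of the limiting elliptic problem driven by the limits of $U_\kappa$ and $\overline{n}_{j,\kappa}$).

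I expect the only genuinely delicate point to be the control of the drift-difference term $(U_\kappa-U_\mu)\cdot\nabla w_{i,\mu}$ tested against $z$: a mere $L^2(\Om_T)$-bound on $\nabla w_{i,\mu}$ would not suffice to pair it against $U_\kappa-U_\mu\in L^2(\Om_T)$ and still close the estimate, so the $L^\infty$ gradient bound coming from the elliptic $W^{2,p}$-regularity of the $w$-equation is essential here; everything else (the two integrations by parts, the Young-inequality absorption, and the time integration) is routine.
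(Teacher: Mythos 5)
Your argument is correct, and it is considerably more explicit than what the paper offers: the paper gives no proof of this lemma at all, presenting it as a ``classical result'' with a citation to Ladyzhenskaya. Your route --- subtracting the two elliptic equations in \eqref{S1-v}, testing $z=w_{i,\kappa}-w_{i,\mu}$ against itself, killing the drift term $\int_\Omega (U_\kappa\cdot\nabla z)\,z\,dx$ via $\vdiv U_\kappa=0$ and $U_\kappa=\bzeros$ on $\partial\Omega$, and controlling the difference term $(U_\kappa-U_\mu)\cdot\nabla w_{i,\mu}$ through the uniform $W^{1,\infty}$ bound inherited from the $L^\infty(0,T;W^{2,p}(\Omega))$ estimate of Lemma~\ref{lem-reg1} with $p>d$ --- is a clean, self-contained Cauchy-sequence argument, and you correctly single out the $L^\infty$ gradient bound as the one non-routine ingredient (an $L^2$ bound on $\nabla w_{i,\mu}$ would indeed not close the estimate against $U_\kappa-U_\mu\in L^2$). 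What the citation route buys is brevity; what yours buys is that it makes visible exactly which convergences of the data ($\overline{n}_{j,\kappa}$ in $L^2(\Om_T)$ by hypothesis, $U_\kappa$ in $L^2(\Om_T)$ from compactness) drive the conclusion, and it identifies the limit as the solution of the limiting drift--diffusion problem.

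Two small points to tighten. First, Lemma~\ref{lem-classic:est}\,$(v)$ gives only \emph{relative compactness} of $(U_\kappa)_\kappa$ in $L^2(\Om_T)$, not convergence of the full sequence, so your Cauchy estimate is literally valid only along a subsequence; this is harmless here (continuity of $\Gamma$ follows by the standard ``every subsequence has a further subsequence with the same limit'' device, and the paper itself extracts subsequences at this stage), but it should be stated. Second, the integration by parts in $\int_\Omega (U_\kappa\cdot\nabla z)\,z\,dx$ uses that $z(t)\in W^{2,p}(\Omega)$ and $U_\kappa(t)\in\bV$ at a.e.\ $t$, which the preceding lemmas supply; one line to that effect would make the vanishing of the boundary term airtight.
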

     
     Summarizing our findings so far, from Lemma \ref{lem2.2},
     \ref{lem-classic:est} and \ref{lem-classic:est-bis}, there exist
     functions $n_i,w_i,U \in L^2(0,T;H^1(\Omega))$ such that, up to extracting
     subsequences if necessary (for $i=1,2$),
     
     	\begin{itemize}
     		\item $n_{i,\kappa} \to n_i$ in $L^2(\Om_T)$ strongly, 
     		\item $w_{i,\kappa}$ $\to w_i $ in
$L^2(0,T;H^1(\Omega))$ strongly,
     		\item $U_{\kappa} \to U $ in $L^2(0,T; \H^1)$ strongly,
     	\end{itemize}
     and from this the continuity of $\Gamma$ on $\A$ follows.
     
     We observe that, from Lemma \ref{lem-classic:est}, $\Gamma(\A)$ is
     bounded in the set
     \begin{equation}\label{def-ww}
     	\mathcal{E}=\left\{ n_i \in L^2(0,T;H^1(\Omega)) \ : \ \partial_t n_i \in
     	L^2(0,T;(H^1(\Omega))') \right\},
     \end{equation}
     for $i=1,2$.
     By the results of \cite{Aubin-Simon}, $\mathcal{E} \hookrightarrow
     L^{2}(\Om_T)$ is compact, thus $\Gamma$ is compact. Now, by the
     Schauder fixed point theorem, the operator $\Gamma$ has a fixed
     point $n_{i,\eps}$ such that
     $\Gamma(n_{i,\eps})=n_{i,\eps}$ for $i=1,2$. Then there exists a solution
     ($n_{i,\eps}, w_{i,\eps},U_\varepsilon)$ of
     \begin{equation}\label{form:ueps}
     	\begin{split}
     		&\displaystyle \int_0^{T}\left \langle\partial_t n_{i,\eps}, \psi_i \right\rangle_{(H^1)^\prime,H^1}\,dt
     		-\iint_{\Omega_T}U_\eps\cdot\nabla n_{i,\eps} \,\psi_i \,dx\,dt + \iint_{\Omega_T} d_i({ n_{i,\eps}} )\nabla n_{i,\eps}\cdot\nabla \psi_i \,dx\,dt  \\ 
     		&\qquad \qquad \qquad \qquad + \iint_{\Omega_T}  \chi_{i,\eps}(n_{i,\eps})\nabla w_{i,\eps}\cdot\nabla \psi_i \,dx\,dt 
     		= \iint_{\Omega_T} F_{i,\eps}( n_{1,\eps},n_{2,\eps})\psi_i \,dx\,dt,\\
     		&		\displaystyle -\iint_{\Omega_T}U_\eps\cdot\nabla w_{1,\eps} \varphi_1 \,dx\,dt + \iint_{\Omega_T} \nabla w_{1,\eps} \nabla  \varphi_1 \,dx\,dt   
     		= \iint_{\Omega_T} (\beta_1n_{2,\eps}-\alpha_1w_{1,\eps})\varphi_1 \,dx\,dt,\\
     		&		\displaystyle -\iint_{\Omega_T}U_\eps\cdot\nabla w_{2,\eps} \varphi_2 \,dx\,dt + \iint_{\Omega_T} \nabla w_{2,\eps} \nabla  \varphi_2 \,dx\,dt   
     		= \iint_{\Omega_T} (\beta_2n_{1,\eps}-\alpha_2w_{2,\eps})\varphi_i \,dx\,dt,\\
     		&		\displaystyle	 \int_0^{T} \left\langle\partial_t U_\eps,\Psi\right\rangle_{\bV^\prime,\bV} \,dt   +  \nu \int_{\Omega}  \nabla U_\eps : \nabla\Psi \,dx\,dt    + \iint_{\Omega_T} (U_\eps \cdot \nabla) U_\eps\cdot \Psi \,dx\,dt \\
     		&\qquad \qquad \qquad \qquad+ \iint_{\Omega_T} Q( n_{1,\eps},n_{2,\eps}) \nabla\phi \cdot\Psi \,dx\,dt =\bzeros,
     \end{split}\end{equation}
     for all test functions $\psi_i,\;\varphi_i \in  L^2(0,T; \H^1(\Omega))$ 
     and $\Psi\in \L^2(0,T; \bV) $, for $i=1,2$. 
     
     \subsection{Existence of weak solutions}
     \label{Sect:proof-weak}
     We have shown in Section \ref{Sect:nondegenerate} that the problem
     (\ref {S-reg}) admits a solution $(n_{1,\varepsilon},n_{2,\varepsilon},w_{1,\varepsilon},w_{2,\varepsilon}, U_\eps)$. The goal in
     this section is to send the regularization parameter $\varepsilon$ to zero
     in sequences of such solutions to obtain weak solutions of the
     original system (\ref{CrossDiff}), (\ref {BC}) and (\ref {IC}). Note that, for each fixed
     $\varepsilon>0$, we have shown the existence of a solution
     $(n_{1,\varepsilon},n_{2,\varepsilon})$ to (\ref {S-reg}) such that for $i=1,2$
     \begin{equation}\label{eq:maxprinc}
     	0\le n_{i,\varepsilon}(t,x)\le M,
     \end{equation}
     for a.e. $(t,x)\in \Om_T$ where $M>0$ is a constant not depending on $\eps$.
     

     Taking $\psi_i=n_{i,\varepsilon}$,  $\varphi_i=w_{i,\varepsilon}$, $\Psi_i=U_{\varepsilon}$
     as test functions in (\ref{form:ueps}) and working exactly as in Lemma \ref{lem-classic:est},
     we obtain for $i=1,2$
     \begin{equation}
     	\label{est1-u-weak-bis}
     	\begin{split}\displaystyle
     		&\sup_{0 \le t \le T} \int_{\Omega}\abs{n_{i,\varepsilon}(t,x)}^2  \,dx +
     		\iint_{\Om_T}\abs{\nabla n_{i,\varepsilon}}^2  \,dx\,dt \le C,\\
     		&\iint_{\Om_T}\abs{\nabla n_{i,\varepsilon}}^2  \,dx\,dt \le C,\\
     		&\iint_{\Om_T}\abs{\nabla w_{i,\varepsilon}}^2  \,dx\,dt \le C,\\
     		&\sup_{0 \le t \le T} \int_{\Omega}\abs{U_{\varepsilon}(t,x)}^2  \,dx +
     		\iint_{\Om_T}\abs{\nabla U_{\varepsilon}}^2  \,dx\,dt \le C,
     	\end{split}
     \end{equation}
     for some constant $C>0$ independent of $\varepsilon$.
     
     Working exactly as the proof of $(iv)$ in Lemma \ref{lem-classic:est}, we get easily for $i=1,2$
     \begin{equation}\label{est2-u-weak-bis}
     	\norm{\partial_t n_{i,\varepsilon}}_{L^2(0,T;(H^1(\Omega))')}+ \norm{\partial_t U_{\varepsilon}}_{\L^1(0,T; \bV^\prime)}\le C,
     \end{equation}
     for some constant $C>0$ independent of $\varepsilon$. 
     
     Then, by (\ref {est1-u-weak-bis})-(\ref{est2-u-weak-bis}) and standard compactness
     results (see \cite{Aubin-Simon}) we can extract subsequences, which we do not relabel,
     such that, as $\varepsilon$ goes to $0$,
     \begin{equation}
     	\begin{array}{l}\displaystyle
     		n_{i,\eps} \to n_i
     		\mbox{ weakly-}\star \mbox{ in  } L^\infty(\Om_T),\\
     		n_{i,\eps} \to n_i \mbox{ weakly in }L^{2}(0,T;H^1(\Omega)), \\
     		n_{i,\eps} \to n_i
     		\mbox{ strongly} \mbox{ in  } L^2(\Om_T),\\
     		\partial_t  n_{i,\eps} \to \partial_t n_i \mbox{ weakly in }L^{2}(0,T;(H^1(\Omega))'),\\
     		w_{i,\eps} \to w_i \mbox{ weakly in }L^{2}(0,T;H^1(\Omega)), \\
     		U_\eps \to U\mbox{ weakly-}\star \mbox{ in  } \L^\infty\big(0,T; \bH \big),\\
     		U_\eps \to U\mbox{ weakly in }\L^2\big(0,T; \bV\big),\\
     		U_\eps \to U\mbox{ strongly} \mbox{ in  } \L^\infty\big(0,T; \bH \big),
     		\label{pa}
     	\end{array}
     \end{equation}
     for $i=1,2$. From the compact embedding $L^\infty(\Omega) \subset (H^1(\Omega))'$, we also have that
     $n_{i,\eps}$ is a Cauchy sequence in $C(0,T;(H^1(\Omega))')$ for $i=1,2$. Moreover,
     with the convergences \eqref{pa} and the weak-$\star$ convergence of $n_{i,\eps}$ to $n_i$
     in $L^\infty(\Om_T)$, we obtain
     $$
     n_{i,\eps} \to n_i \mbox{ strongly in $L^p(\Om_T)$ for $1\le p
     	<\infty$ and for $i=1,2$}.
     $$
     With the above convergences, we pass to the limit in \eqref{form:ueps} to obtain the weak formulation \eqref{wf-1} in the sense of Definition \ref{defSol}.\\
     \noindent In the following step, we define the operator $B$ such that $B(U):=(U\cdot\nabla  )U$ for $U\in \L^2\big(0,T; \bV\big)$. Note that we can write the equation of $U$ in \eqref{wf-1} in the following form
     \begin{equation}\label{Eq3.25}
     	\frac{d}{dt}\langle U,\Psi\rangle=\langle - \nu\Delta U+B(U)+Q(n_1,n_2)\nabla   \phi,\Psi\rangle,\;\;\forall \Psi\in\bV.
     \end{equation}
     Since the operator $-\Delta:\bV\to\bV^\prime$ is linear and continuous
     and $U\in\L^2(0,T;\bV)$, we deduce easily that $-\Delta U\in \L^2(0,T;\bV^\prime)$. Moreover,
     $Q(n_1,n_2)\nabla   \phi\in\L^2(0,T;\bV^\prime)$ and the operator $b(U,U,w)=\langle B(U),w\rangle$ is trilinear continuous on $\bV$. Furthermore, we exloit $\parallel B(U)\parallel_{\bV^\prime}\leq \parallel U \parallel_\bV$ to deduce $B(U)\in\L^1(0,T,\bV^\prime)$ and consequently we arrive to $\partial_t U\in\L^1(0,T,\bV^\prime)$.\\
     
     \noindent In the final step we are interested of the recuperation of the pressure $p$.
     For this we set
     $$
     \displaystyle I_1(t)=\int_{0}^{t}U(s)\,ds,\;I_2(t)=\int_{0}^{t}(U\cdot\nabla  )U(s)\,ds,\;I_3(t)=\int_{0}^{t}Q(n_1,n_2)(s)\nabla  \phi\,ds.
     $$
     It is clear that $I_1,\,I_2,\,I_2\,\in C(0,T;(\H^1(\Omega))^\prime)$.
     Integrating \eqref{Eq3.25} over $[0,T]$ yields
     $$
     \langle U(t)-U_0-\nu\Delta I_1(t)+I_2(t)+I_3(t),\Psi\rangle=\bzeros,\;\;\forall t\in[0,T], \;\;\forall \psi\in\bV.
     $$
     An application of the Rham Theorem (see \cite{Te01} for more details), there exists $P(t)\in\L^2_0(\Omega)$ such that
     $$
     \displaystyle U(t)-U_0-\nu\Delta I_1(t)+I_2(t)+I_3(t)+\nabla   P=\bzeros,
     $$ 
     for each $t\in[0,T]$, where $\displaystyle\L^2_0(\Omega)=\Big\{w\in\L^2(\Omega),\int_{\Omega}w\,dx=0\Big\}$.
     This implies that $\nabla   P\in C(0,T;\H^{-1}(\Omega))$ and thus $P\in C(0,T;\L^2_0(\Omega))$.
     Finally, a derivation with respect to $t$ in the sense of distributions, we obtain 
     $$\displaystyle\partial_t U-\nu\Delta U+(U\cdot\nabla  )U+Q(n_1,n_2) \nabla \phi+ \nabla p=\bzeros,
     $$
     where $p=\partial_t P\in W^{-1,\infty}(0,T;\L^2_0(\Omega))$. 
	\section{Multiscale derivation toward chemotaxis-chemicals in a fluid }\label{Sec3}
	This section is devoted to the derivation of chemotaxis-chemicals--fluid system  with predator prey terms from an kinetic--fluid model using  the micro-macro decomposition technique inspiring from \cite{ABKMZ20}. We start by presenting the kinetic--fluid model and its properties. Next, an equivalent appropriate system on the basis of the micro-macro decomposition method is obtained. Then, our system \eqref{CrossDiff} is derived. 
	
	We consider the case where the set for velocity is a sphere of radius $r>0$, $V=rS^{d-1}$. The kinetic--fluid model is given as follows  
	\begin{equation}\label{so2}
		\left\{
		\begin{array}{l}
			\displaystyle\varepsilon \partial_tf_1+ v \cdot \nabla_{x}f_1^\varepsilon=
			\frac{1}{\varepsilon}\mathcal{T}_1[f_{2}^\varepsilon](f_1^\varepsilon ) +G_1(f_1^\varepsilon,f_2^\varepsilon,w_1,w_2,v,U),    \\\\
			\displaystyle\varepsilon \partial_tf_2^\varepsilon+ v \cdot \nabla_{x}f_2=
			\frac{1}{\varepsilon}\mathcal{T}_2[f_{1}^\varepsilon](f_2^\varepsilon) +G_2(f_1^\varepsilon,f_2^\varepsilon,w_1,w_2,v,U),    \\\\
			\displaystyle U\cdot\nabla  w_1-\Delta w_1+\alpha_{1}\,w_1=\beta_1 \int_{V}f_2^\varepsilon dv,\\\\
			\displaystyle U\cdot\nabla  w_2-\Delta w_2+\alpha_{2}\,w_2=\beta_2\int_{V}f_1^\varepsilon dv,\\\\
			\displaystyle\partial_t U -\nu \Delta U+ k(U\cdot \nabla)U+\nabla p+Q\Bigg(\int_{V}f_1^\varepsilon dv,\int_{V}f_2^\varepsilon dv\Bigg) \nabla\phi = \bzeros,  \; \;  \vdiv U=0,\\ \\
			f_i^\varepsilon(t=0,x,v)=f^\varepsilon_{i,0}(x,v),\quad U(t=0,x)=U_{0}(x),
		\end{array}\right.
	\end{equation}
	where $f_1(t,x,v)$ and $f_2(t,x,v)$ are the distribution functions describing the statistical evolution of predator and prey species, where $t > 0$, $x\in \mathbb{R}^d,$ and $v\in V$ are time, position, and velocity, respectively, $\mathcal{T}_i$ is a stochastic operator representing a random modification of the direction of the predator and prey, and the operator $G_i$ describes the gain-loss balance of theses species. 
	To apply the micro-macro decomposition method by low order
	asymptotic expansions in term of the mean free path $\varepsilon$, the following assumptions are needed. The turning operator $\mathcal{T}_i$ is decomposed as follows:
	\begin{equation}
		\mathcal{T}_1[f_2^\varepsilon](f_1^\varepsilon)= \mathcal{L}_1(f_1^\varepsilon)+\varepsilon\,\mathcal{T}_1^2[f_1^\varepsilon](f_2^\varepsilon),
	\end{equation}
	\begin{equation}
		\mathcal{T}_2[f_1^\varepsilon](f_2^\varepsilon)= \mathcal{L}_2(f_2^\varepsilon)+\varepsilon\,\mathcal{T}_2^2[f_2^\varepsilon](f_1^\varepsilon),
	\end{equation}
	where $\mathcal{L}_1$, ($\mathcal{L}_2$) represents the dominant part of the turning kernel and is assumed to be independent of $f_2^\varepsilon$, ($f_2^\varepsilon$) respectively. Herein, we omit the dependence on $\varepsilon$ in the functions $f_1^\varepsilon$ and $f_2^\varepsilon$.

	The operators $\mathcal{T}_{i}$ for  $i,j=1,2$ are given by
	\begin{equation}
		\displaystyle\mathcal{T}^j_{i}(f_i)= \int_{V} \big(T^{j}_{i}(v^*,v)f_i(t, x, v^{*}) -
		{T^j_{i}} (v,v^*)f_i(t, x, v) \big)dv^{*}, \label{3.42}
	\end{equation}
	where $T_i^j$ is the probability kernel for the new velocity $v\in V$ given that the previous velocity was $v^*$. We assume that $T_i^1=\frac{\sigma_i}{\abs{V}}$. Then,
	\begin{equation}\label{Ti}
		\mathcal{T}_i^1(g)=-\sigma_i\,g.
	\end{equation} 
	Remark that the operators $\mathcal{L}_i(g)$ and $\mathcal{T}^j_i$ satisfy
	\begin{equation}\label{H02}
		\displaystyle \int_V \mathcal{L}_i(g)\,dv=\int_V \mathcal{T}^j_i(g)\,dv=0, \;\;i,j=1,2.
	\end{equation}
	Moreover, there exists a bounded velocity distribution $M_i(v)>0$ for $i=1,2$ independent of $t$ and $x$ such that
	\begin{equation}\label{tx2}
		T_i^1 (v,v^{*} ) M_i(v^{*}) = T_i^1 (v^{*},v ) M_i(v),
	\end{equation}
	holds. We consider the following choice
	\begin{equation}\label{M}
		M_i(v) = \frac{1}{\abs{V}}.
	\end{equation}
	Note that the flow produced by these equilibrium
	distributions vanishes and $M_i$ are normalized, i.e.
	\begin{equation}
		\int_V v \, M_i(v)dv  =0, \quad \int_V
		M_i(v)dv =1, \quad  i=1,2. \label{equilibre2}\end{equation}

	The other probability kernel $T_i^2$ is given by
	\begin{equation}\label{2.8}
		T_i^2[f_2](v,v^*)=\frac{\sigma_i\, D_i\,M_i}{f_i}\big(1+d_i(f_i)\big)\,v\cdot\nabla\Big (\frac{f_i}{M_i}\Big).
	\end{equation}
	
	Second, the interaction operators $G_i$ satisfy the following properties	
	\begin{equation} \label{D2}\displaystyle G_i(f_1,f_2,w_1,w_1,v,U)= G_{i}^1(f_1,f_1,w_1,w_1,v,U)+ \varepsilon\,
		G_{i}^2(f_1,f_2), 
	\end{equation} where
	
	\begin{equation}\label{G11}\displaystyle G_{i}^1(f_1,f_2,w_1,w_2,v,U) =\frac{d\,\sigma_i}{r^2\,\abs{V}}\Big(f_i\,U+\chi_i\Big(\int_{V}f_idv\Big)\nabla w_i\Big).
	\end{equation}
	
	Note that \begin{equation}\label{CO2}\displaystyle\int_V G_{i}^1(f_1,f_2,w_1,w_2,v,U)dv =0,\;\;i = 1,2.
	\end{equation}
	We define the interactions operators $G_{2}^1$ and $G_{2}^2$ by
	\begin{equation}\label{G2}
		\displaystyle G_{1}^2(f_1,\,f_2)= \frac{1}{|V|}f_1(a_1-b_1f_1-c_1f_2),
		\qquad \,\,  G_2^2(f_1,\,f_2)= \frac{1}{|V|}f_2(a_2-b_2f_1-c_2f_2).
	\end{equation}
	
	\noindent Using the same arguments as in \cite{ABKMZ20}, we find that the operator $\mathcal{L}_i$ has the following properties.
	\begin{Lemma}
		\label{LE1}  The following properties of the operator $\mathcal{L}_i$ for $i=1,2$ holds true:
		\begin{itemize}\label{L1}
			\item[i)] The operator $\mathcal{L}_i$ is self-adjoint in the space
			$\displaystyle{{\L^{2}\left(V ,{d\xi \over M_i}\right)}}$. \item[ii)]
			For $f\in \L^2$, the equation $\mathcal{L}_i(g) =f$ has a unique
			solution $\displaystyle{g \in \L^{2}\left(V, {d\xi \over
					M_i}\right)}$, satisfying
			$$
			\int_{V} g(\xi)\, d\xi = 0 \quad \Longleftrightarrow \quad   \int_{V} f(\xi)\, d\xi =0. 
			$$
			\item[iii)]  The equation $\mathcal{L}_i(g) =\xi \,  M_i(\xi)$, has a
			unique solution denoted by $\theta_i(\xi)$ for $ i=1,2$. 
			\item[iv)]  The kernel
			of $\mathcal{L}_i$ is $N(\mathcal{L}_i) = vect(M_i(\xi))$ for $ i=1,2$.
		\end{itemize}
	\end{Lemma}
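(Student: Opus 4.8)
Following the classical theory of turning operators exploited in \cite{ABKMZ20}, I would first make the dominant operator explicit: by \eqref{3.42} with the constant kernel $T_i^1=\sigma_i/|V|$ and the normalized equilibrium $M_i\equiv 1/|V|$ from \eqref{M}, $\mathcal{L}_i$ is the relaxation (BGK-type) operator
$\mathcal{L}_i(g)(v)=\sigma_i\big(M_i(v)\int_V g(v^*)\,dv^*-g(v)\big)$,
i.e.\ $-\sigma_i\,\mathrm{Id}$ plus the rank-one operator $g\mapsto\sigma_i M_i\int_V g\,dv$. All four claims then reduce to elementary linear algebra on $L^2(V,dv/M_i)$; the same conclusions hold for a general turning kernel obeying the micro-reversibility relation \eqref{tx2}, the conservation property \eqref{H02}, and the positivity of $T_i^1$, which is the framework of \cite{ABKMZ20}, and I point to that route where it is the natural one.

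For (i), inserting the explicit expression into the bilinear form gives $\int_V\mathcal{L}_i(g)\,h\,\tfrac{dv}{M_i}=\sigma_i\big(\int_V g\,dv\int_V h\,dv-\int_V gh\,\tfrac{dv}{M_i}\big)$, which is symmetric in $g$ and $h$, so $\mathcal{L}_i$ is self-adjoint; in the general case one symmetrizes the double integral $\int\!\!\int T_i^1(v^*,v)g(v^*)h(v)\,\tfrac{dv^*\,dv}{M_i(v)}$ by swapping $v\leftrightarrow v^*$ and using \eqref{tx2}. For (iv), $\mathcal{L}_i(g)=0$ is equivalent to $g(v)=M_i(v)\int_V g\,dv$, i.e.\ $g$ is a scalar multiple of $M_i$, so $N(\mathcal{L}_i)=\mathrm{vect}(M_i)$; the inclusion $\mathrm{vect}(M_i)\subset N(\mathcal{L}_i)$ already follows from \eqref{tx2} alone, while in the general setting the reverse inclusion is the one genuinely analytic step — one tests $\mathcal{L}_i(g)=0$ against $g/M_i$ and, after the same symmetrization, obtains an integral of $T_i^1M_i\big(\tfrac{g(v)}{M_i(v)}-\tfrac{g(v^*)}{M_i(v^*)}\big)^2$ that must vanish, so the strict positivity of $T_i^1$ forces $g/M_i$ constant.

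For (ii), since $\mathcal{L}_i+\sigma_i\,\mathrm{Id}$ has rank one, the Fredholm alternative applies (in the general case $\mathcal{L}_i$ differs from a multiplication operator by a compact integral operator, which also places it in the Fredholm framework); combining with (i) and (iv), the range of $\mathcal{L}_i$ is the $L^2(V,dv/M_i)$-orthogonal complement of $M_i$, and this equals $\{f:\int_V f\,dv=0\}$ because $\langle M_i,f/M_i\rangle_{M_i}=\int_V f\,dv$. Hence $\mathcal{L}_i(g)=f$ is solvable iff $\int_V f\,dv=0$, and then it has a unique solution with $\int_V g\,dv=0$ (the only mean-zero element of $N(\mathcal{L}_i)$ being $0$), namely $g=-f/\sigma_i$. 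Finally, (iii) is the particular case $f=\xi\,M_i(\xi)$ of (ii): its compatibility condition $\int_V\xi\,M_i(\xi)\,d\xi=0$ is exactly \eqref{equilibre2}, so there is a unique mean-zero solution, which we name $\theta_i$ (explicitly $\theta_i(\xi)=-\xi\,M_i(\xi)/\sigma_i$). The only point needing genuine care is the kernel characterization in (iv) outside the BGK case, where the coercivity/H-theorem inequality together with the strict positivity of $T_i^1$ is essential; with the explicit kernel used here even that step is a one-line computation, so I anticipate no real obstacle.
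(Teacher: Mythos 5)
The paper offers no proof of this lemma at all: it simply asserts that the properties follow ``using the same arguments as in \cite{ABKMZ20}''. Your proposal therefore supplies an argument the paper only cites, and it is correct. The key observation --- that with the constant kernel $T_i^1=\sigma_i/\abs{V}$ and the equilibrium \eqref{M}, the definition \eqref{3.42} makes $\mathcal{L}_i$ the rank-one perturbation $\mathcal{L}_i(g)=\sigma_i\big(M_i\int_V g\,dv^*-g\big)$ of $-\sigma_i\,\mathrm{Id}$ --- reduces (i)--(iv) to elementary linear algebra; note that this reading is the only one consistent with item (iv), since the literal formula \eqref{Ti}, $\mathcal{T}_i^1(g)=-\sigma_i g$, is its restriction to mean-zero functions and would otherwise have trivial kernel. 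Your explicit solution $\theta_i(\xi)=-\xi M_i(\xi)/\sigma_i$ agrees with the formula the paper uses later when computing $D_i$ in \eqref{di2}, and your parallel remarks on general micro-reversible kernels (symmetrization via \eqref{tx2}, the quadratic ``H-theorem'' form for the kernel characterization, compactness for the Fredholm alternative) are exactly the standard route of the cited literature, so the two approaches coincide in substance. One cosmetic slip: orthogonality of $f$ to $M_i$ in $\L^2(V,dv/M_i)$ is read off from $\langle M_i,f\rangle_{M_i}=\int_V M_i f\,\tfrac{dv}{M_i}=\int_V f\,dv$, not from $\langle M_i,f/M_i\rangle_{M_i}$, which equals $\int_V f/M_i\,dv$; since $M_i$ is a positive constant here the two vanishing conditions are equivalent and nothing in your argument breaks.
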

	
	\noindent We denote the integral with respect to the variable $v$ will be denoted by $\langle . \rangle$. The main idea of the micro-macro method is to decompose the distribution function $f_i$ for $i=1,2$ as follows
	$$f_i(t,x,v)=M_i(v) n_i(t,x) + \varepsilon  g_i(t,x,v), $$
	where 
	$$n_i(t,x)= \langle f_i(t,x,v)\rangle:=\int_Vf_i(t,x,v)\,dv.$$ 
	This implies that $\langle g_i \rangle=  0$ for $ i=1,2$. Inserting $f_i$ in kinetic--fluid model (\ref{so2}) and using the above assumptions and properties of the interaction and the turning operators, one obtains
	\begin{equation}\label{n2} 
		\left\{
		\begin{array}{l}
			\displaystyle\partial_t (M_1 (v)n_1)  + \varepsilon \partial_t g_1 +
			\frac{1}{\varepsilon}  v M_1(v) \cdot \nabla n_1 + v \cdot
			\nabla g_1   = \frac{1}{\varepsilon}\mathcal{L}_1(g_1)\\ 
			{}\\
			\displaystyle\hskip0.8cm+\mathcal{T}_1[f_2](f_1)+\frac{1}{\varepsilon}G^{1}_{1}(f_1,f_2,w_1,w_2,v,U)+G^{2}_{1}(f_1,f_2),\\
			\\{}
			\displaystyle\partial_t (M_2 (v)n_2)  + \varepsilon \partial_t g_2 +
			\frac{1}{\varepsilon}  v M_2(v) \cdot \nabla n_2 + v \cdot
			\nabla g_2   = \frac{1}{\varepsilon}\mathcal{L}_2(g_2)\\ 
			{}\\
			\displaystyle\hskip0.8cm+\mathcal{T}_2[f_1](f_2)+\frac{1}{\varepsilon}G^{2}_{1}(f_1,f_2,w_1,w_2,v,U)+G^{2}_{2}(f_1,f_2),\\{}\\
			\displaystyle U\cdot\nabla  w_1-\Delta w_1+\alpha_{1}\,w_1=\beta_1 n_2,\\\\
			\displaystyle U\cdot\nabla  w_2-\Delta w_2+\alpha_{2}\,w_2=\beta_2n_1,\\\\
			\displaystyle\partial_t U -\nu \Delta U+ k(U\cdot \nabla)U+ \nabla p+Q\big(n_1,n_2\big) \nabla\phi = \bzeros,  \; \;  \vdiv U=0.
		\end{array}
		\right.
	\end{equation}
	
	\noindent In order to separate the macroscopic density $n_i(t,x)$ and microscopic quantity $g_i(t,x,v)$ for $i=1,2$ one has to use the projection technique. For that, let consider $P_{M_i}$ the orthogonal projection onto $N(\mathcal{T}_i)$, for $i=1,2$. It follows
	$$P_{M_i}(h)= \langle h\rangle M_i, \quad  \mbox{for any}\quad  h\in
	\displaystyle{{\L^{2}\left(V ,{dv \over M_i}\right)}}, \qquad i=1,2.$$ 
	
	\noindent Now, inserting the operators $I -P_{M_i}$ into Eq. \eqref{n2}, and using known properties for the projection $P_{M_i},$ $ = 1,2$ yields the following micro-macro formulation  
	\begin{equation}\label{mM12}
		\left\{
		\begin{array}{l l}
			\displaystyle\partial_t g_1 +
			\frac{1}{\varepsilon^2} v M_1(v) \cdot \nabla n_1+ \frac{1}{\varepsilon}(I-P_{M_1})(v
			\cdot
			\nabla g_1) =\frac{1}{\varepsilon^2}\mathcal{L}_1(g_1)\\
			\displaystyle \hspace{0.5cm}+\frac{1}{\varepsilon}\mathcal{T}_1[f_2](f_1)+\frac{1}{\varepsilon^2}G_{1}^{1}(f_1,      f_2,w_1,w_2,v,U)+ \frac{1}{\varepsilon}(I-P_{M_1}) G_{1}^2(f_1,f_2), \\
			{}\\
			\displaystyle\partial_t n_1+ \,\langle v   \cdot
			\nabla g_1 \rangle =  \langle G^{2}_{1}(f_1,f_2)\rangle, \\ {} \\
			\displaystyle\partial_t g_2 +
			\frac{1}{\varepsilon^2} v M_2(v) \cdot \nabla n_2+ \frac{1}{\varepsilon}(I-P_{M_2})(v
			\cdot\nabla g_2) =\frac{1}{\varepsilon^2}\mathcal{L}_2(g_2)\\
			\displaystyle \hspace{0.5cm}+\frac{1}{\varepsilon}\mathcal{T}_2[f_1](f_2)+\frac{1}{\varepsilon^2}G_{2}^{1}(f_1,      f_2,w_1,w_2,v,U)+ \frac{1}{\varepsilon}(I-P_{M_2}) G_{2}^2(f_1,f_2), \\
			{}\\
			\displaystyle\partial_t n_2+ \,\langle v   \cdot
			\nabla g_2 \rangle =  \langle G^{2}_{2}(f_1,f_2)\rangle, \\ {} \\
			\displaystyle U\cdot\nabla  w_1-\Delta w_1+\alpha_{1}\,w_1=\beta_1 n_2,\\\\
			\displaystyle U\cdot\nabla  w_2-\Delta w_2+\alpha_{2}\,w_2=\beta_2n_1,\\\\
			\displaystyle\partial_t U -\nu \Delta U+ k(U\cdot \nabla)U+ \nabla p+Q\big(n_1,n_2\big) \nabla\phi = \bzeros,  \; \;  \vdiv U=0.
		\end{array} \right.
	\end{equation}
	
	The following proposition states that the micro-macro formulation \eqref{mM12} is equivalent to  kinetic-fluid model (\ref{so2})
	
	\begin{Proposition}
		
		\noindent i) Let $(f_1,f_2,w_1,w_2,U,p)$ be a solution of nonlocal kinetic-fluid model (\ref{so2}). Then \\$(n_1,n_2,g_1,g_2,w_1,w_2,U,p)$ (where
		$n_i=\langle f_i \rangle $ and $g_i=
		{1\over \varepsilon}(f_i-M_i n_i)$) is a solution to coupled system
		(\ref{mM12}) associated with the following initial data for $i=1,2$
		\begin{equation} \label{er42} n_i(t=0)=n_{i,0} =\langle f_{i,0} \rangle, \quad
			g_i(t=0)=g_{i,0}={1\over \varepsilon}(f_{i,0}-M_i n_{i,0}),\quad \hbox{and}\; U(t=0)=U_{0}, 
		\end{equation}	
		\noindent ii) Conversely, if $(n_1,n_2,g_1,g_2,w_1,w_2,U,p)$ satisfies system
		(\ref{mM12}) associated with the following initial data \\ $(n_{1,0},n_{2,0}, g_{1,0},g_{2,0},U_{0})$ such
		that $\langle g_{i,0} \rangle=0$ for $i=1,2$. Then $(f_1,f_2,w_1,w_2,U,p)$ (where $f_i=M_i n_i+\varepsilon g_i$)  is a solution to nonlocal kinetic-fluid model (\ref{so2}) with initial data
		$f_{i,0}=M_i n_{i,0}+\varepsilon g_{i,0}$ and one has $ n_i=\langle f_i \rangle$ and $\langle g_i\rangle=0$, for $i=1,2$.
	\end{Proposition}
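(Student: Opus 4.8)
The plan is to prove the two implications by direct substitution followed by algebraic recombination, using only the kernel and self-adjointness properties of $\mathcal{L}_i$ collected in Lemma~\ref{LE1} together with the vanishing-moment identities \eqref{H02}, \eqref{CO2} and \eqref{equilibre2}. As in \cite{ABKMZ20}, the argument is formal: all manipulations are carried out assuming enough regularity to integrate against $v$ and to commute $\partial_t$ with $\langle\cdot\rangle$.

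For part (i), starting from a solution $(f_1,f_2,w_1,w_2,U,p)$ of \eqref{so2} I would set $n_i=\langle f_i\rangle$ and $g_i=\tfrac1\varepsilon(f_i-M_in_i)$; since $\langle M_i\rangle=1$, this gives $\langle g_i\rangle=0$ at once. Substituting $f_i=M_in_i+\varepsilon g_i$ into the first two equations of \eqref{so2}, dividing by $\varepsilon$, inserting the splittings of $\mathcal{T}_i$ and $G_i$, and using that $n_i$ is $v$-independent together with $\mathcal{L}_i(M_i)=0$ (Lemma~\ref{LE1}), hence $\mathcal{L}_i(M_in_i)=n_i\mathcal{L}_i(M_i)=0$, reproduces the intermediate system \eqref{n2}. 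Applying the projection $P_{M_i}h=\langle h\rangle M_i$ to \eqref{n2} and using $\langle vM_i\rangle=0$, $\langle\mathcal{L}_i(g_i)\rangle=0$ and $\langle\mathcal{T}_i^{\,j}(\cdot)\rangle=0$ from \eqref{H02}, $\langle G_i^1\rangle=0$ from \eqref{CO2}, and $\langle g_i\rangle=0$, produces the macroscopic balance $\partial_tn_i+\langle v\cdot\nabla g_i\rangle=\langle G_i^2\rangle$; applying $I-P_{M_i}$ to \eqref{n2} and dividing once more by $\varepsilon$ produces the microscopic equation of \eqref{mM12}. The $w_i$ and $U$ equations are literally those of \eqref{so2} after writing $\int_Vf_i\,dv=n_i$, and the initial conditions \eqref{er42} follow by evaluating the definitions at $t=0$.

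For part (ii), given $(n_1,n_2,g_1,g_2,w_1,w_2,U,p)$ solving \eqref{mM12} with $\langle g_{i,0}\rangle=0$, the first step is to propagate the zero-average constraint: integrating the microscopic equation of \eqref{mM12} over $v$ and using $\langle(I-P_{M_i})h\rangle=0$ for every $h$, $\langle vM_i\rangle=0$, and the vanishing averages of $\mathcal{L}_i$, $\mathcal{T}_i^{\,j}$ and $G_i^1$, one obtains $\partial_t\langle g_i\rangle=0$, hence $\langle g_i(t,\cdot)\rangle\equiv\langle g_{i,0}\rangle=0$. Setting $f_i=M_in_i+\varepsilon g_i$ then yields $\langle f_i\rangle=n_i$, so the $w_i$ and $U$ equations of \eqref{so2} hold. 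It remains to form $M_i\times(\text{macro eq.})+\varepsilon\times(\text{micro eq.})$ and simplify using $M_i\langle v\cdot\nabla g_i\rangle+(I-P_{M_i})(v\cdot\nabla g_i)=v\cdot\nabla g_i$, $M_i\langle G_i^2\rangle+(I-P_{M_i})G_i^2=G_i^2$, $\partial_t(M_in_i)+\varepsilon\partial_tg_i=\partial_tf_i$, $\tfrac1\varepsilon vM_i\cdot\nabla n_i+v\cdot\nabla g_i=\tfrac1\varepsilon v\cdot\nabla f_i$, and $\mathcal{L}_i(g_i)=\tfrac1\varepsilon\mathcal{L}_i(f_i)$; this recovers $\partial_tf_i+\tfrac1\varepsilon v\cdot\nabla f_i=\tfrac1{\varepsilon^2}\mathcal{T}_i[f_j](f_i)+\tfrac1\varepsilon G_i$, i.e.\ the kinetic equations of \eqref{so2} after multiplying by $\varepsilon$, with initial datum $f_{i,0}=M_in_{i,0}+\varepsilon g_{i,0}$.

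The only genuinely delicate point is the bookkeeping of the powers of $\varepsilon$ attached to the turning operators: one has to combine $\mathcal{T}_i=\mathcal{L}_i+\varepsilon\mathcal{T}_i^2$, the identity $\mathcal{L}_i(M_in_i)=0$, and the commutation of $P_{M_i}$ with the collision and interaction terms so that, in part (ii), the recombined right-hand side is exactly $\tfrac1{\varepsilon^2}\mathcal{T}_i[f_j](f_i)+\tfrac1\varepsilon G_i$ rather than something off by a factor of $\varepsilon$. Everything else reduces to linearity of $P_{M_i}$ and to the orthogonality relations already recorded.
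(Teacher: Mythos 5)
Your proposal is correct and follows essentially the same route as the paper: part (i) is exactly the substitution-and-projection computation the authors carry out in the text when passing from \eqref{so2} to \eqref{n2} and then to \eqref{mM12}, and part (ii) is the standard converse (propagate $\langle g_i\rangle=0$ by integrating the microscopic equation in $v$, then recombine $M_i\times(\text{macro})+\varepsilon\times(\text{micro})$), which the paper leaves implicit with a reference to \cite{ABKMZ20}. Your identification of the $\varepsilon$-bookkeeping for the turning operators as the only delicate point is apt, since the paper's own displays \eqref{n2} and \eqref{mM12} carry inconsistent powers of $\varepsilon$ on the $\mathcal{T}_i$ terms that your computation implicitly corrects.
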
 
	
	\noindent Next, in order to develop asymptotic analysis of
	system (\ref{mM12}), $\mathcal{T}_i$ and $G_{i}^j$ assumed to satisfy the following asymptotic behavior $\varepsilon \to 0$
	
	\begin{equation}
		\mathcal{T}_1[M_2n_2+\varepsilon g_2]=\mathcal{T}_1[M_2n_2]+O(\varepsilon),\qquad 	\mathcal{T}_2[M_1n_1+\varepsilon g_1]=\mathcal{T}_2[M_1n_1]+O(\varepsilon),
	\end{equation}
	\begin{equation} \label{G22} G_{i}^{1}\big(M_1n_1 +\varepsilon g_1, M_2n_2 +\varepsilon g_2,w_1,w_2,v,U\big)= G_{i}^{1}\big(M_1n_1, M_2n_2,w_1,w_2,v,U \big)+ O(\varepsilon), 
	\end{equation} 
	and 
	\begin{equation} \label{G23} G_{i}^{2}\big(M_1n_1 +\varepsilon g_1, M_2n_2 +\varepsilon g_2\big)= G_{i}^{2}\big(M_1n_1, M_2n_2 \big)+ O(\varepsilon),
	\end{equation} for $ i=1,2$.
	\noindent Using assumptions \eqref{Ti}, \eqref{G22} and \eqref{G23}, the following equations for $g_i$ can
	be obtained from \eqref{mM12}
	\begin{eqnarray}\label{x12} g_i =  \mathcal{L}_i^{-1}\Big(v M_i \cdot \nabla n_i-\mathcal{L}_i^{-1}\Big(G_i^{1}(M_1 n_1, M_2 n_2,w_1,w_2,v,U)\Big)+O(\varepsilon),\;\;i=1,2.
	\end{eqnarray}
	\noindent Finally, inserting \eqref{x12} into the second and the fourth equations in \eqref{mM12}, yields macro--fluid model
	\begin{equation}\label{mM222}
		\left\{
		\begin{array}{l}
			\partial_t n_i +  \vdiv \, \Big( \beta_i(n_i)+\Gamma_i(n_1,n_2,w_1,w_2,U) -  D_i \cdot \nabla n_i\Big)= H_i(n_1,n_2) +O(\varepsilon),\\
			{}\\
			\displaystyle U\cdot\nabla  w_1-\Delta w_1+\alpha_{1}\,w_1=\beta_1 n_2,\\\\
			\displaystyle U\cdot\nabla  w_2-\Delta w_2+\alpha_{2}\,w_2=\beta_2n_1,\\\\
			\displaystyle  \partial_t U -\nu \Delta U+ k(U\cdot \nabla)U+ \nabla p+Q(n_1,n_2) \nabla\phi = \bzeros,  \; \;  \vdiv U=0,
		\end{array}
		\right.  
	\end{equation}
	\noindent where $D_i$, $\beta_i$, $\Gamma_i$ and $H_i$ are given, respectively, as follows
	\begin{equation}\label{di2}
		\qquad D_i =- \Big\langle v \otimes \theta_i(v) \Big\rangle =\frac{r^2}{d\,\sigma_i},
	\end{equation}
	with $\theta_i$ is given by
	$$\theta_i= - \frac{1}{\sigma_i}v  M_i(v)$$
	\begin{equation} \label{beta} 
		\beta_1(n_1)=  -\Big\langle  {\theta_1n_1\over
			M_1}\mathcal{T}_{1}^{2}[n_1](M_2) \Big\rangle=\frac{D_1}{n_1}\big(1+d_1(n_1)\big)\cdot\nabla n_1,
	\end{equation}
	
	\begin{equation} \label{beta2} 
		\beta_2(n_2)=  -\Big\langle  {\theta_2n_2\over
			M_2}\mathcal{T}_{2}^{2}[n_2](M_1) \Big\rangle=\frac{D_2}{n_2}\big(1+d_2(n_2)\big)\cdot\nabla n_2,
	\end{equation}
	\begin{equation} \label{w32} 
		\Gamma_i(n_1,n_2,w_1,w_2,U)=  -\Big\langle  {\theta_i\over
			M_i}G_{i}^{1}(M_1 n_1, M_2 n_2,w_1,w_2,v,U) \Big\rangle=n_iU+\chi_i(n_i)\nabla w_i,
	\end{equation}
	and
	\begin{equation} \label{H2}H_i(n_1,n_2)= \Big\langle G_{i}^{2}(M_1 n_1, M_2 n_2) \Big\rangle=F_i(n_1,n_2),\;  \hbox{for}\; i=1,2. 
	\end{equation}
	Finally, collecting the previous results with $\vdiv \,U=0$ and (\ref{mM222}), yields the macro-scale system  \eqref{CrossDiff} of the order $O(\varepsilon)$
	\begin{equation}\label{CrossDiff2}
		\left\{
		\begin{array}{l}
			\displaystyle \partial_t n_{1}+U\cdot\nabla n_1- \vdiv\Big(d_{1}(n_1) \nabla  n_1\Big) +\vdiv \Bigl(\chi_1(n_1)\nabla 
			w_1 \Bigl)=F_1(n_1,n_2)+O(\varepsilon),\\\\
			\displaystyle \partial_t n_{2}+U\cdot\nabla  n_2- \vdiv\Big(d_{2}(n_2) \nabla  n_2\Big)+ \vdiv \Bigl(\chi_2(n_2)\nabla 
			w_2 \Bigl)=F_2(n_1,n_2)+O(\varepsilon),
			\\ \\
			U\cdot\nabla  w_1-\Delta w_1+\alpha_{1}\,w_1=\beta_1 n_2,\\\\
			U\cdot\nabla  w_2-\Delta w_2+\alpha_{2}\,w_2=\beta_2 n_1,\\\\
			\displaystyle  \partial_t U -\nu \Delta U+ k(U\cdot \nabla)U+ \nabla p+Q(n_1,n_2) \nabla\phi = \bzeros,  \; \;  \vdiv U=0.
		\end{array}
		\right.\end{equation}
	
	\section{Computational analysis in two dimensions}\label{Sec4}
	
	We investigate computational analysis of nonlinear cross-diffusion--fluid with chemicals system \eqref{CrossDiff} in two dimensional space for two interacting populations; for instance, phytoplankton and zooplankton. First, we numerically demonstrate the cross-diffusion with chemicals in the absence of the fluid ($U=\bzeros$) by using the finite-volume method. Second, we consider the full system ($U\neq\bzeros$).
	We show the effect of external forces (obstacle inside the domain and the force of gravity) on the dynamics of fluid flow
	and simultaneously on the behavior of interacting populations by using the finite-element method.

	\subsection{Cross-diffusion with chemicals in the absence of fluid}
	We investigate two dimensional space computational analysis of nonlinear cross-diffusion with chemicals system \eqref{CrossDiff} using finite volume method. For that, we consider a family $\mathfrak{T}_h$ of admissible meshes of the domain $\Omega$ consisting of disjoint open and convex polygons called control volumes, see \cite{EGH00}. In the rest of this subsection, we shall use the following notation: the parameter $h$ is the maximum diameter of the control volumes in $\mathfrak{T}_h$. $K$ is a generic volume in $\mathfrak{T}$, $|K|$is the $2$-dimensional Lebesgue measure of $K$ and $N(K)$ is the set of the neighbors of $K$. Moreover, for all $L \in N(K)$, we denote by $\sigma_{K,L}$ the interface between $K$ and $L$ where $L$ is a generic neighbor of $K$. $\eta_{K,L}$ is the unit normal vector to $\sigma_{K,L}$ outward to $K$. For an interface $\sigma_{K,L}$, $|\sigma_{K,L}|$ will denote its $1$-dimensional measure. {$d_{K,L}$ denotes} the distance between $x_K$ and $x_L$, where the points $x_K$ and $x_L$ are respectively the center of $K$ and $L$.
	On the other hand, we assume that a discrete function on the mesh $\mathfrak{T}_h$ is a set $(g_K)_K\in\mathfrak{T}$ and we identify it with the piece-wise constant function $g_h$ on $\Omega$ such that $g_h\mid_K = g_K$. Furthermore, we consider an admissible discretization of $(0,T)\times\Omega$ consisting of an admissible mesh $\mathfrak{T}_h$ of $\Omega$ and of a time step size $\Delta t_h > 0$ (both $\Delta t_h$ and the size $\max_{K\in t_h}diam(K)$ tend to zero as $h \to 0$). Next, we define the discrete gradient $\nabla_hg_h$ as the constant per diamond $T_{K,L}$ function by
	\begin{equation*}\label {14}
		\Big(\nabla_hg_h\Big)\rvert_{\mathfrak{T}_{K,L}}=\nabla_{K,L}g_h:=\frac{g_L-g_K}{d_{K,L}}\eta_{K,L}.
	\end{equation*}
	Finally, we define the average of source terms $F_{i,K}^{k+1}$ by $F_{i,K}^{k+1}=F_i(n_1(t^k,x),n_2(t^k,x)),$ for $i=1,2$. And we make the following choice to approximate the diffuse terms 
	$$n_{i,K,L}^{k+1}=\min\{n_{i,K}^{{k+1}^+},n_{i,L}^{{k+1}^+}\},$$
	where $n_{i,J}^{{k+1}^+}=\max(0,n_{i,J}^{k+1})$ for $i=1,2$ and $J=K,L$.
	The computation starts from the initial cell averages $\displaystyle n_{i,0}^K=\frac{1}{|K|}\int_{K}n_{i,0}(x)\,dx$ for $i=1,2$.
	In order to advance the numerical solution from $t^k$ to $t^{k+1} = t^k + \Delta t$, we use the following implicit finite
	volume scheme: determine $n^{k+1}_{i,K}$ for $K\in \mathfrak{T}$, $i = 1, 2$ such that
	\begingroup
	\begin{equation}\label{2DScheme}
		\left\{
		\begin{array}{ll}
			|K|\frac{n^{k+1}_{1,K}-n^{k}_{1,K}}{\Delta t}-d_{1}\sum\limits_{L \in N(K)}	\frac{|\sigma_{K,L}|}{d_{K,L}}(n_{1,L}^{k+1}-n_{1,K}^{k+1})+\sum\limits_{L \in N(K)}\frac{|\sigma_{K,L}|} {d_{K,L}}\Big[\chi_1(n_{1,K,L}^{k+1})(w_{1,L}^{k+1}-w_{1,K}^{k+1})\Big] =|K| F_{1,K}^{k+1},\\
			|K|\frac{n^{k+1}_{2,K}-n^{k}_{2,K}}{\Delta t}-d_{2}\sum\limits_{L \in N(K)}	\frac{|\sigma_{K,L}|}{d_{K,L}}(n_{2,L}^{k+1}-n_{2,K}^{k+1})+\sum\limits_{L \in N(K)}\frac{|\sigma_{K,L}|} {d_{K,L}}\Big[\chi_2(n_{2,K,L}^{k+1})(w_{2,L}^{k+1}-w_{2,K}^{k+1})\Big]=|K| F_{2,K}^{k+1},\\
			\sum\limits_{L \in N(K)}	\frac{|\sigma_{K,L}|}{d_{K,L}}(w_{1,L}^{k+1}-w_{1,K}^{k+1})+\alpha_1|K|w_{1,K}^{k+1}=\beta_1|K|n_{2,K}^k,\\
			\sum\limits_{L \in N(K)}	\frac{|\sigma_{K,L}|}{d_{K,L}}(w_{2,L}^{k+1}-w_{2,K}^{k+1})+\alpha_2|K|w_{2,K}^{k+1}=\beta_2|K|n_{1,K}^k
		\end{array}\right.
	\end{equation}\endgroup
	for all $K\in \mathfrak{T}_h,\; k\in N_h$.  We consider implicitly the homogeneous Neumann boundary condition. To solve the corresponding nonlinear system arising from the implicit finite volume scheme \eqref{2DScheme}, we have used the Newton method. Note that the linear systems involved in Newton's method are solved by the GMRES method.
	
	For the numerical simulations, we consider uniform mesh giving by a Cartesian grid $N_x = N_y = 256$ and we take the following parameters  $$a_1 =10,\;a_2 = 0.1,\;b_1 = b_2 = 2,\;c_1 = 0.4,\;c_2 = 0.01.$$ 
	
	The corresponding diffusion coefficients are given by $d_i  = \alpha_i = 1$, for $i = 1, 2$. The chemotactic sensitivity parameters are chosen by
	$$\beta_1=20,\;\beta_2=100.$$
	
	\subsubsection{ Example 1: species the interacting via chemical substance} 
	
	For this numerical test, the chemotactic coefficients are $\chi_1 =2>0,$ and $\chi_2=-0.8<0$. This matches well cross-diffusion phenomena where the predator directs its movement towards the prey, while the movement of the prey is against the presence of the predator.  For the initial condition, the prey and predator  are concentrated in small pockets at a four spatial point (see Figure \ref{fig1}).
	
	In Figure \ref{fig1}, we display the numerical solution for each species at four different simulated times. Initially, at time $t = 0.05$, we can observe the effect of the chemotaxis for the predator feeling their prey, and the prey feeling the presence of the predator. At time $t = 0.1$. We notice the rapid movement of the predator towards the regions occupied by the prey. The prey moves to the regions where the predator is not located. At time $t = 0.5$, it is clearly seen that the predator occupy almost the entire area, while the prey moves toward (running away) the area where the predator is not located.
	
	\begin{figure}[pos=!ht]
		\centering
		{\includegraphics[height=2in ,width=6in]{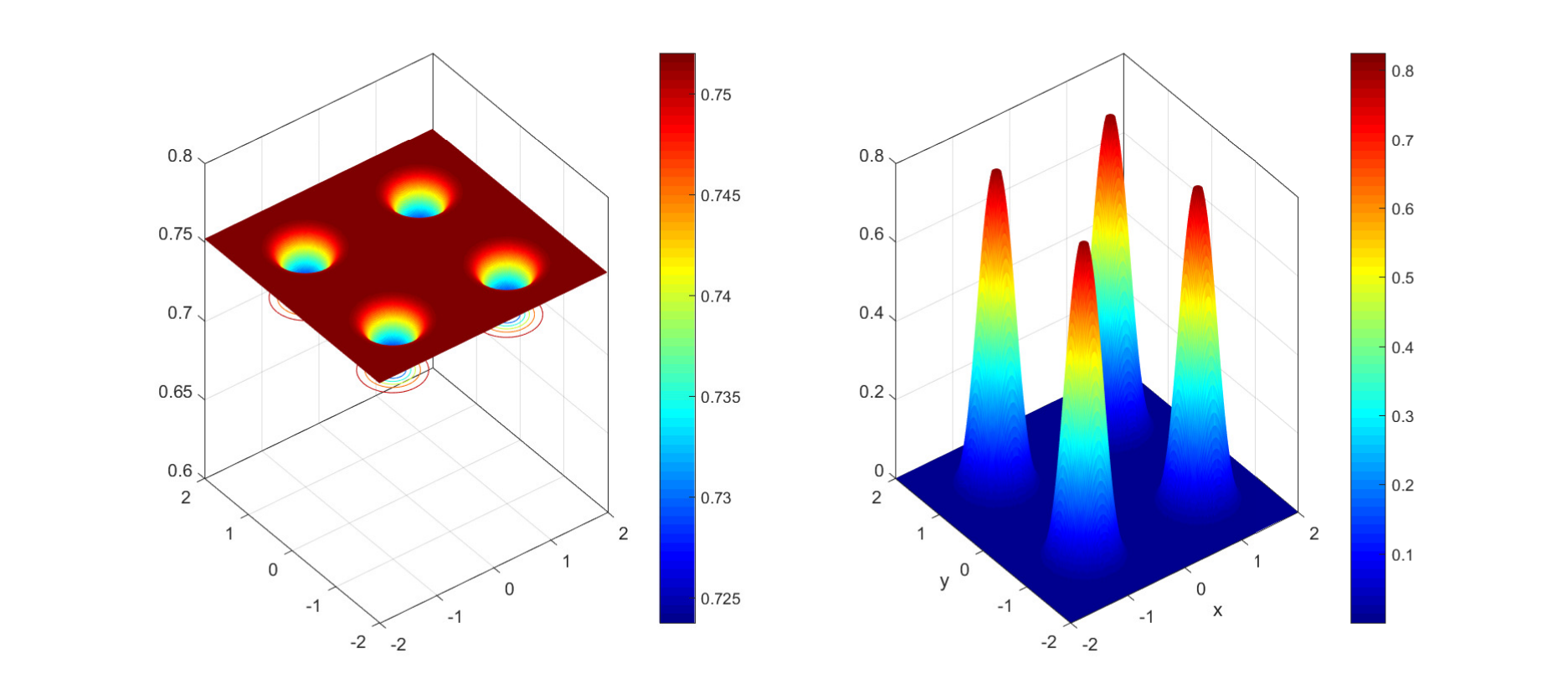}}
		{\includegraphics[height=2in ,width=6in]{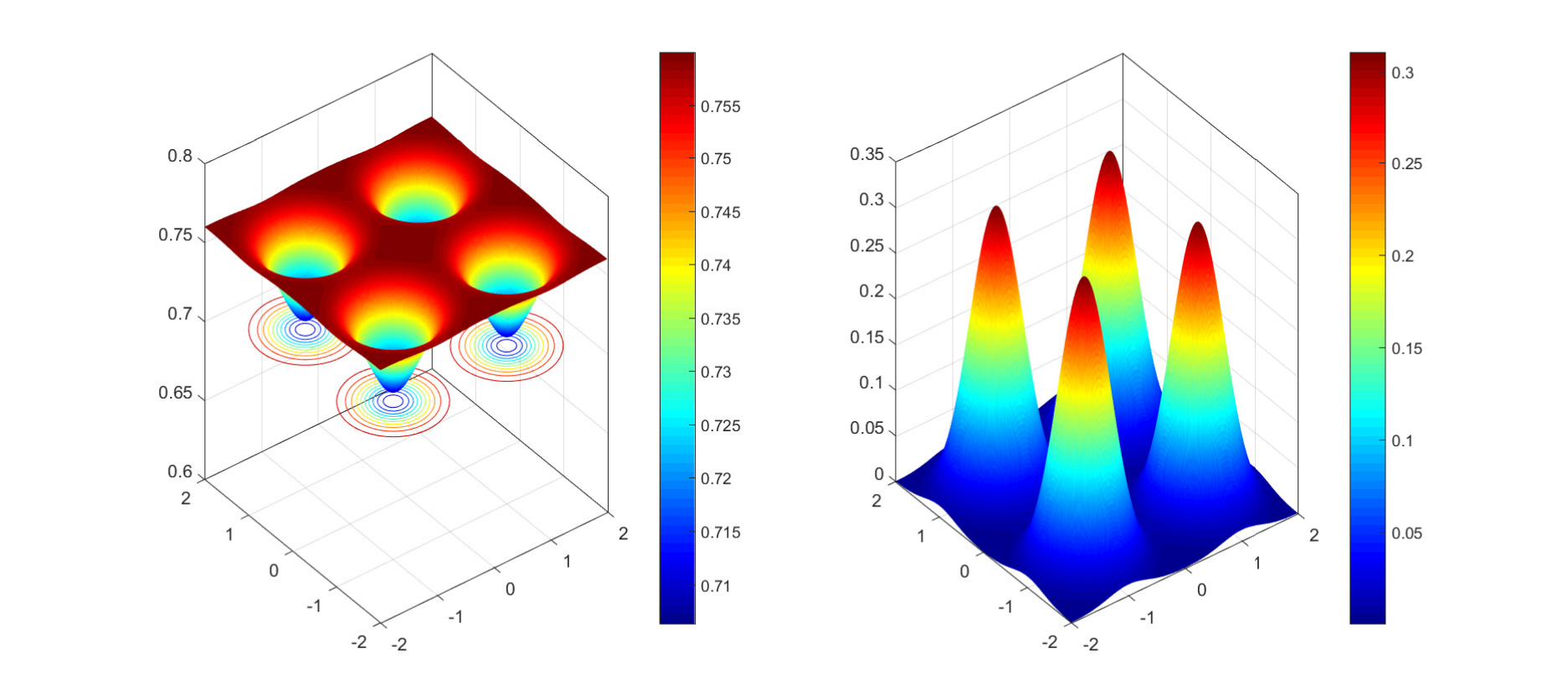}}
		{\includegraphics[height=2in ,width=6in]{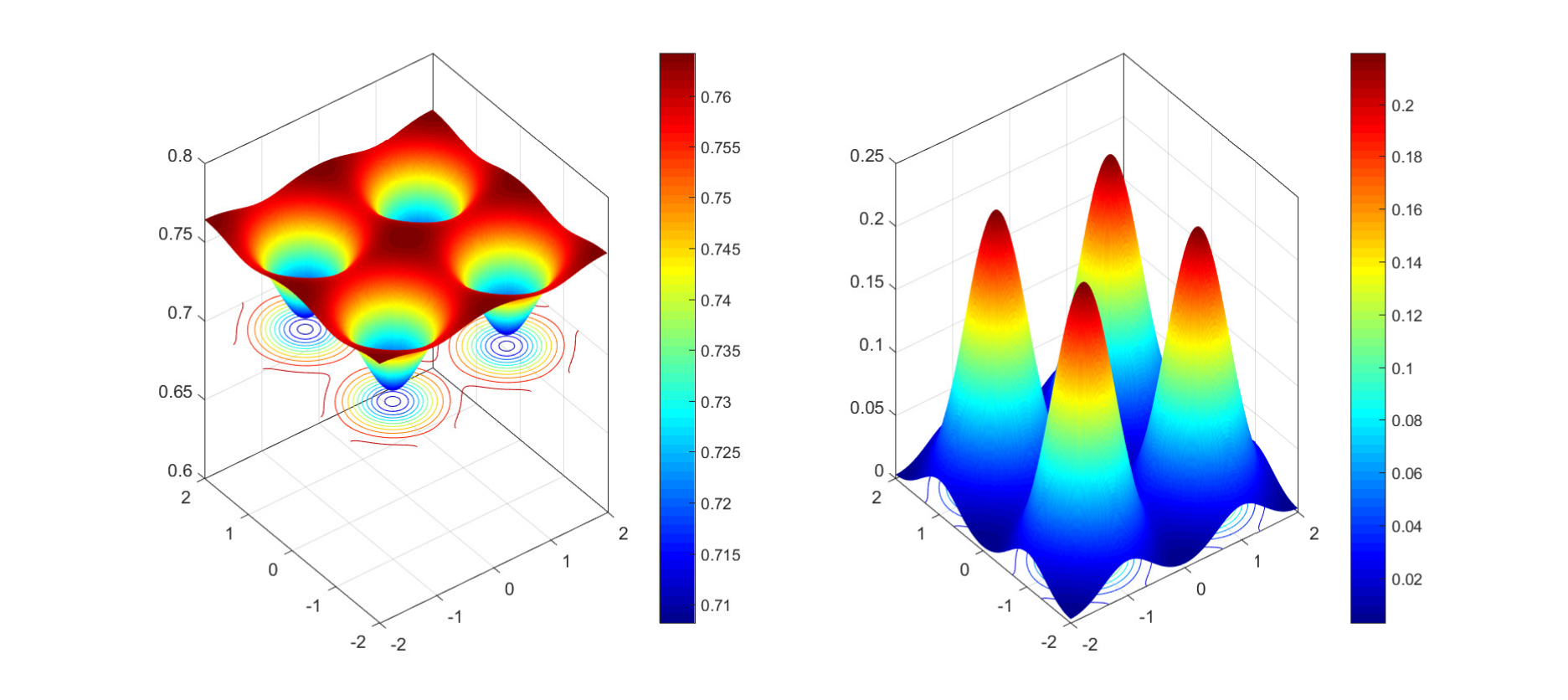}}
		{\includegraphics[height=2in ,width=6in]{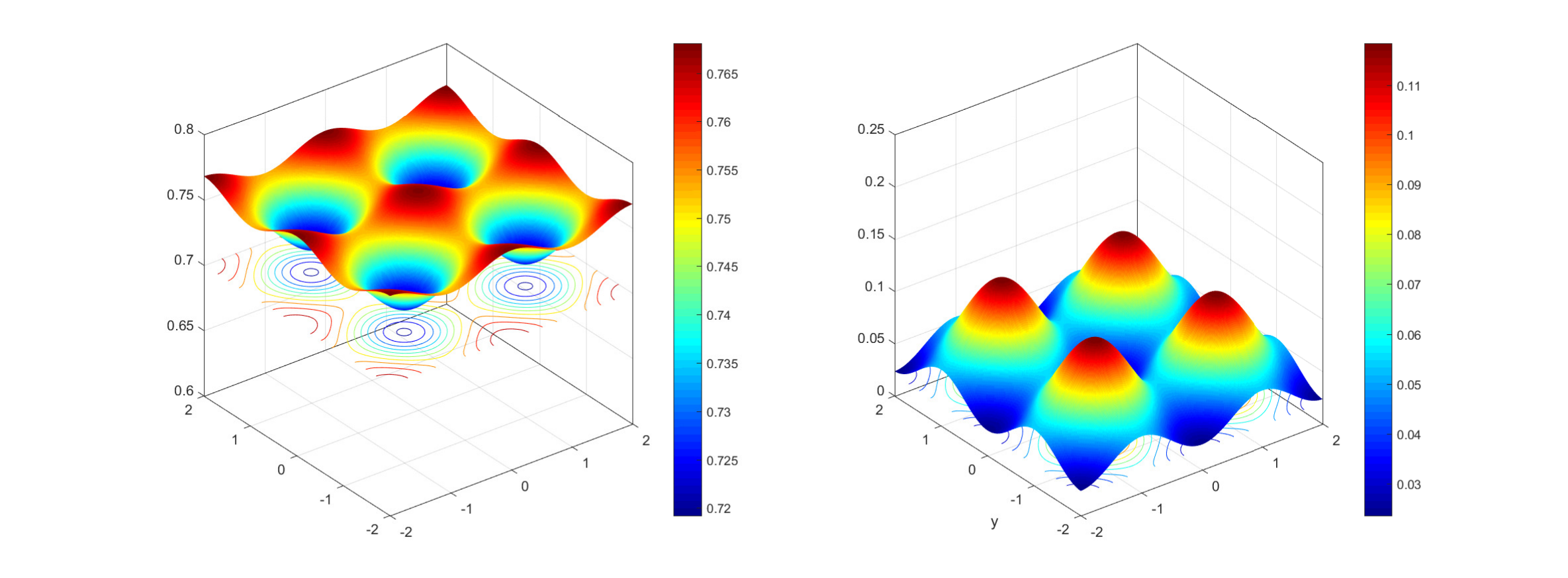}}
		\caption{Example 1: interaction of the predator and the prey at different times $t = 0.01,\, 0.05,\, 0.75,\, 0.15$}
		\label{fig1}
	\end{figure}
	
	\subsubsection{Example 2: prey do not interact via chemical substances }
	
	In this Example, we consider $\chi_1 = 2$ and $ \chi_2= 0$. This means that we do not consider chemotactic movement of the prey. The predator and prey are concentrated in small pockets at a one spatial point (see Figure \ref{fig2}). We show in Figure \ref{fig2} the numerical solution for each species at four different simulations time. We notice the rapid movement of the predator spreads out to the areas where the prey is located, while the prey presents isotropic and homogeneous diffusion (due to the choice of the tactic coefficient).
	\begin{figure}[pos=!ht]
		\centering
		{\includegraphics[height=2in ,width=6in]{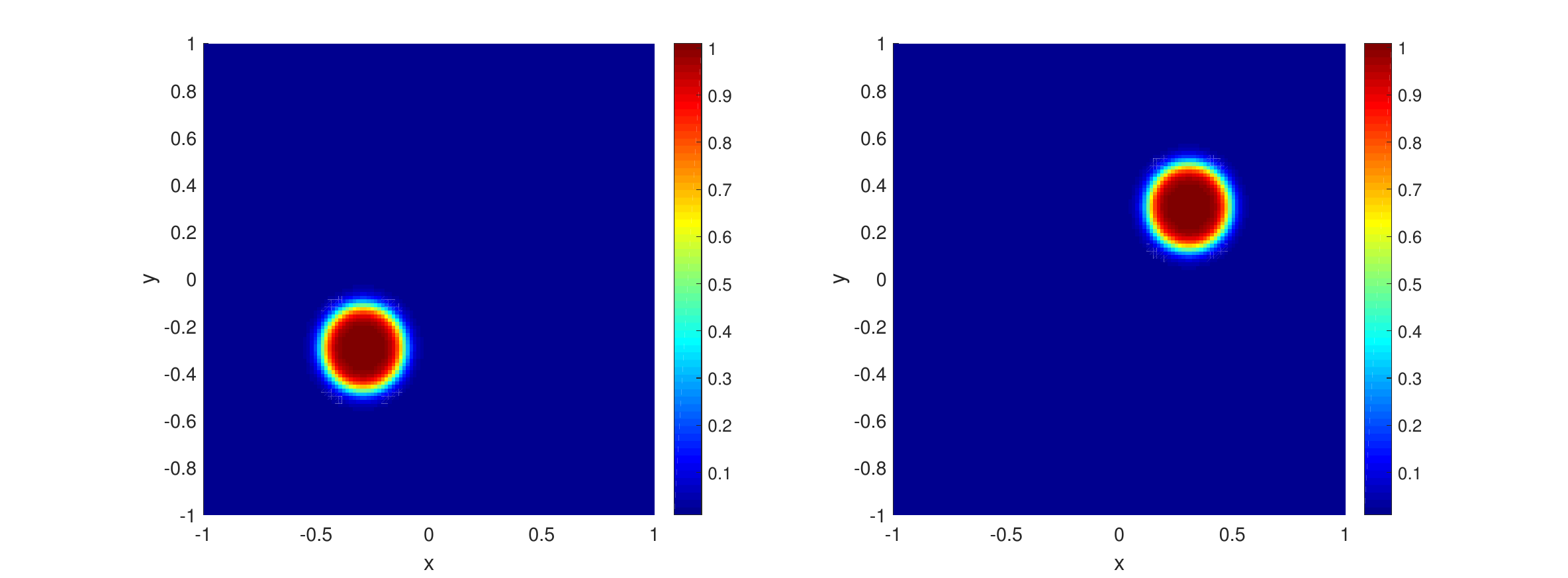}}
		{\includegraphics[height=2in ,width=6in]{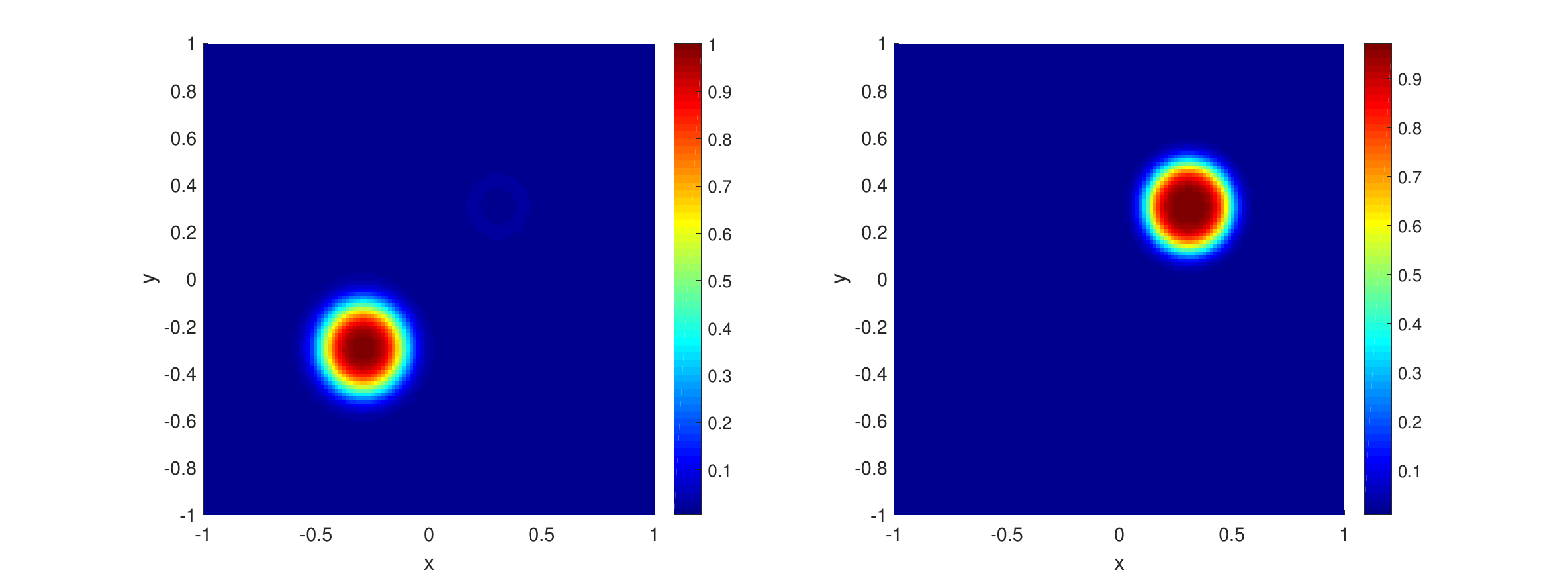}}
		{\includegraphics[height=2in ,width=6in]{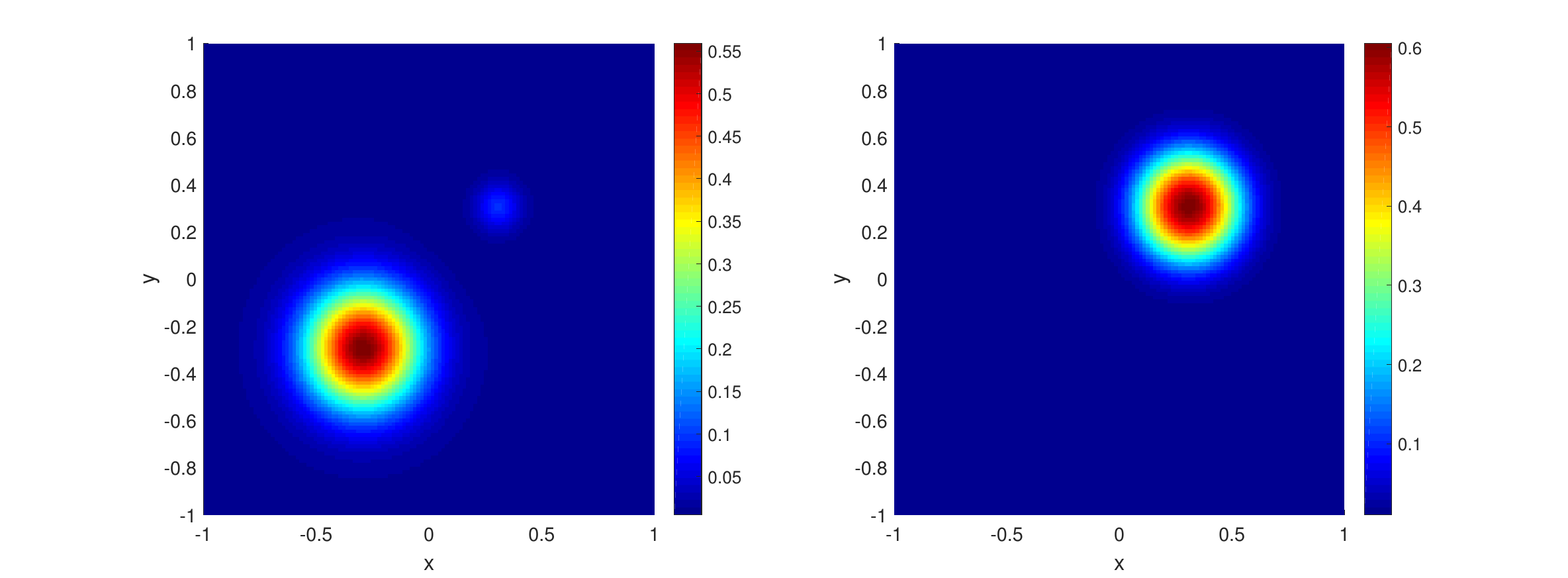}}
		{\includegraphics[height=2in ,width=6in]{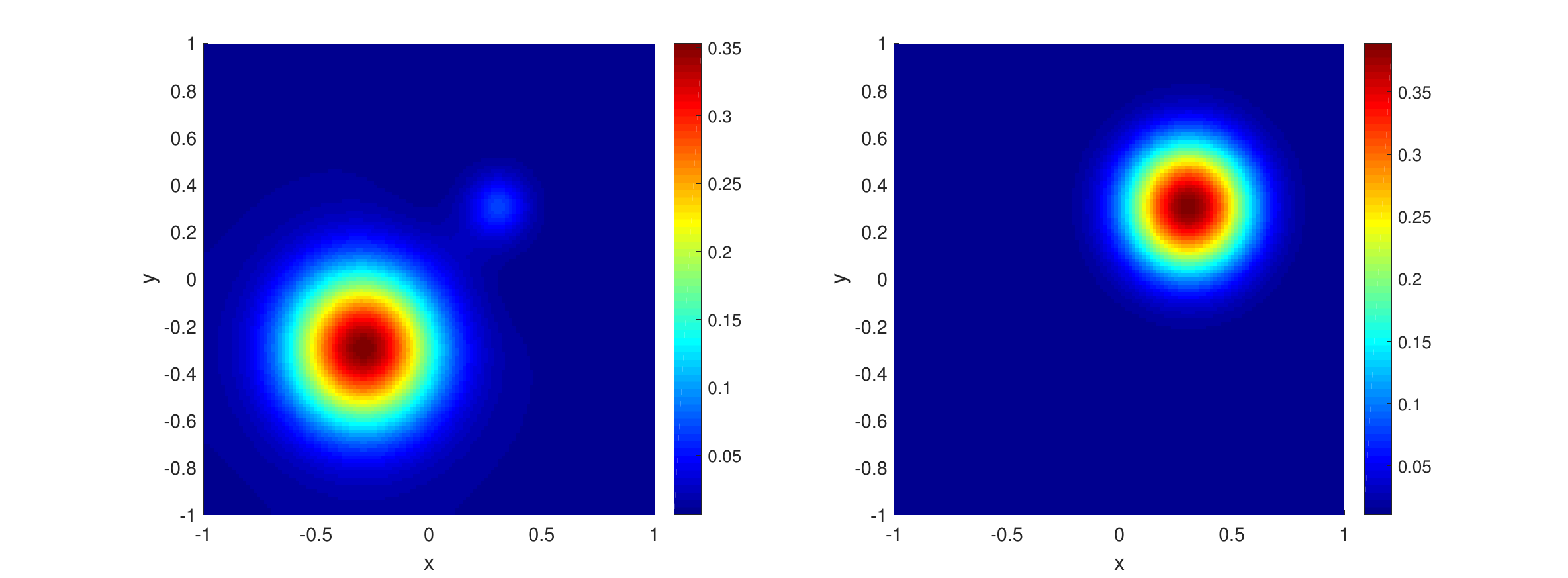}}
		\caption{Example 2: interaction of the predator and the prey at different times $t = 0, 0.05,\, 0.1,\, 0.5$}
		\label{fig2}
	\end{figure}
	
	\subsubsection{Example 3: spatial patterns formation}  We assume that the densities of species are a random perturbation around the stationary state $(n_1^*,n_2^*)$. Consequently, the initial data are given by
	$$n_1(0,x)=n_1^*+n_1(x)_\delta,\quad n_2(0,x)=n_2^*+n_2(x)_\delta,\,\qquad x\in\Omega,$$
	where $J(x)_\delta\in[0,1]$ is a uniform distributed variable for $J=n_1,\,n_2$. The stationary state is given by \cite{ABR11}
	
	$$(n_1^*,n_2^*)=\Big(\frac{a_2c_1-a_1c_2}{b_2c_1-b_1c_2},\frac{a_2b_1-a_1b_2}{b_1c_2-b_2c_1}\Big),$$
	where $a_1=0.61,\; a_2=0.52,\; b_1=0.4575,\; b_2=0.31,\; c_1=9.5,\; c_2=8.2.$
	
	In Figure \ref{fig3}, we observe islands of high concentration of preys are formed. This reflects the phase separation triggered by preys avoiding predator.
	
	\begin{figure}[pos=!ht]
		\centering
		{\includegraphics[height=2in ,width=6in]{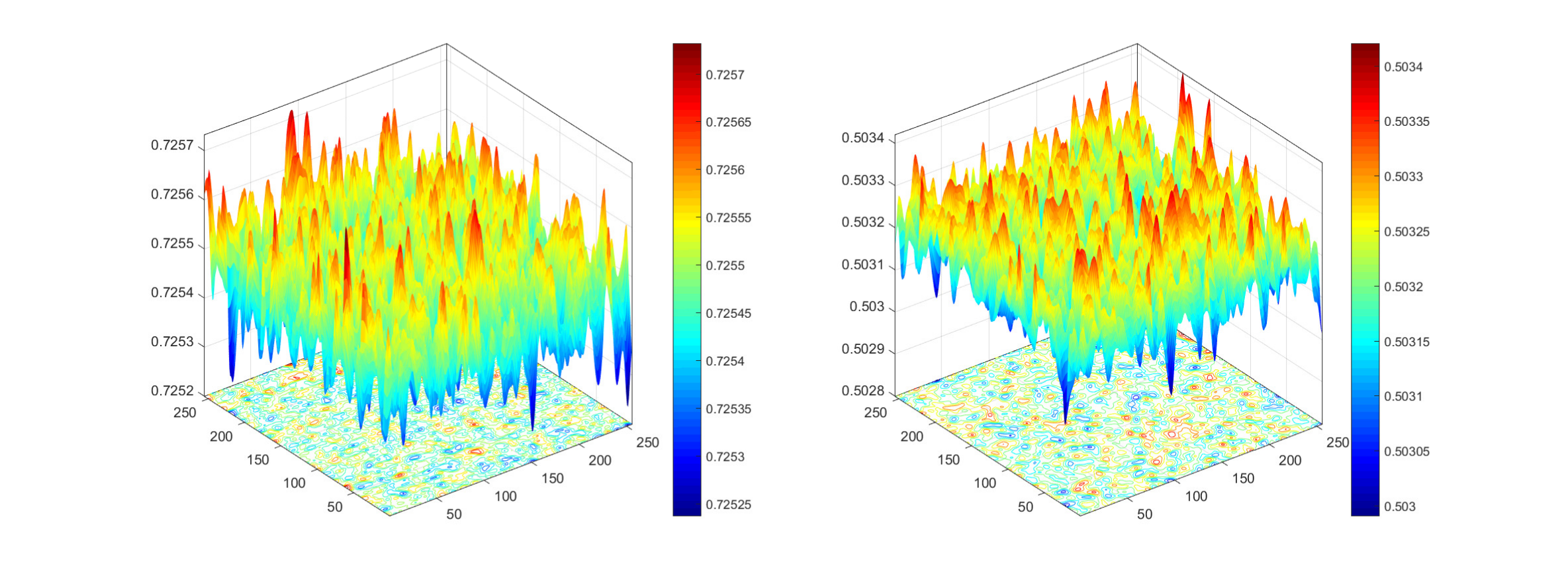}}
		{\includegraphics[height=2in ,width=6in]{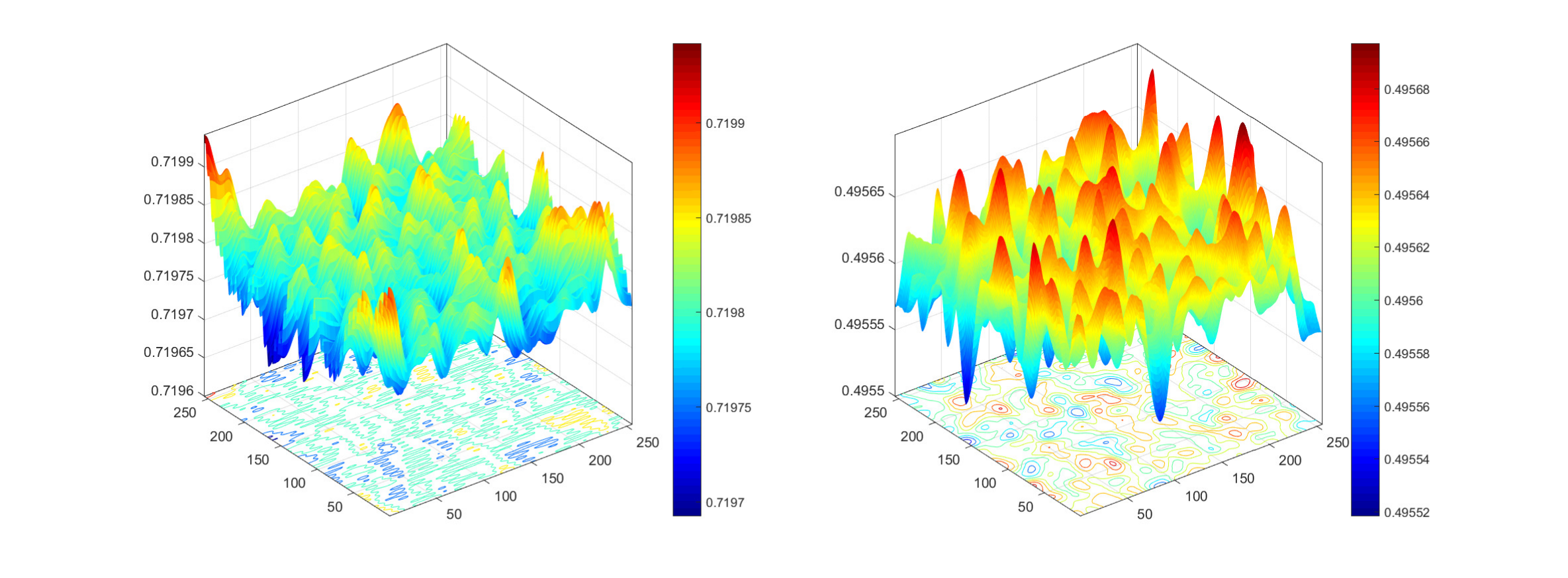}}
		{\includegraphics[height=2in ,width=6in]{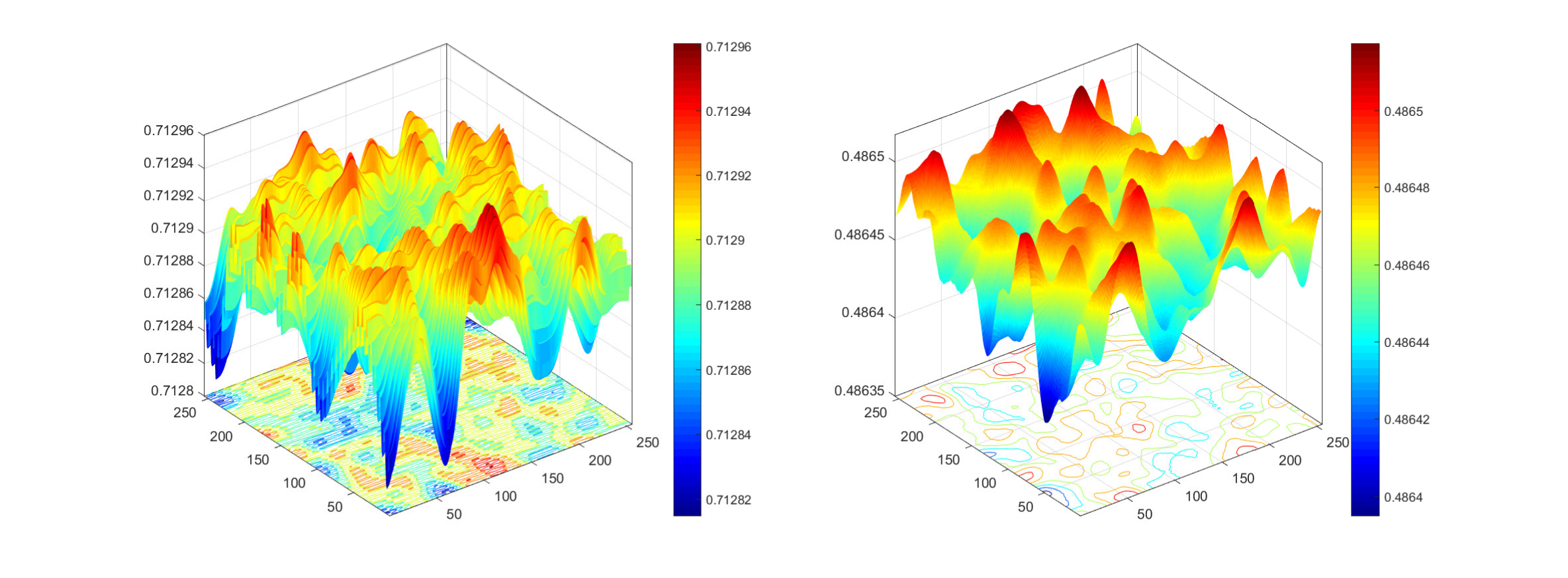}}
		{\includegraphics[height=2in ,width=6in]{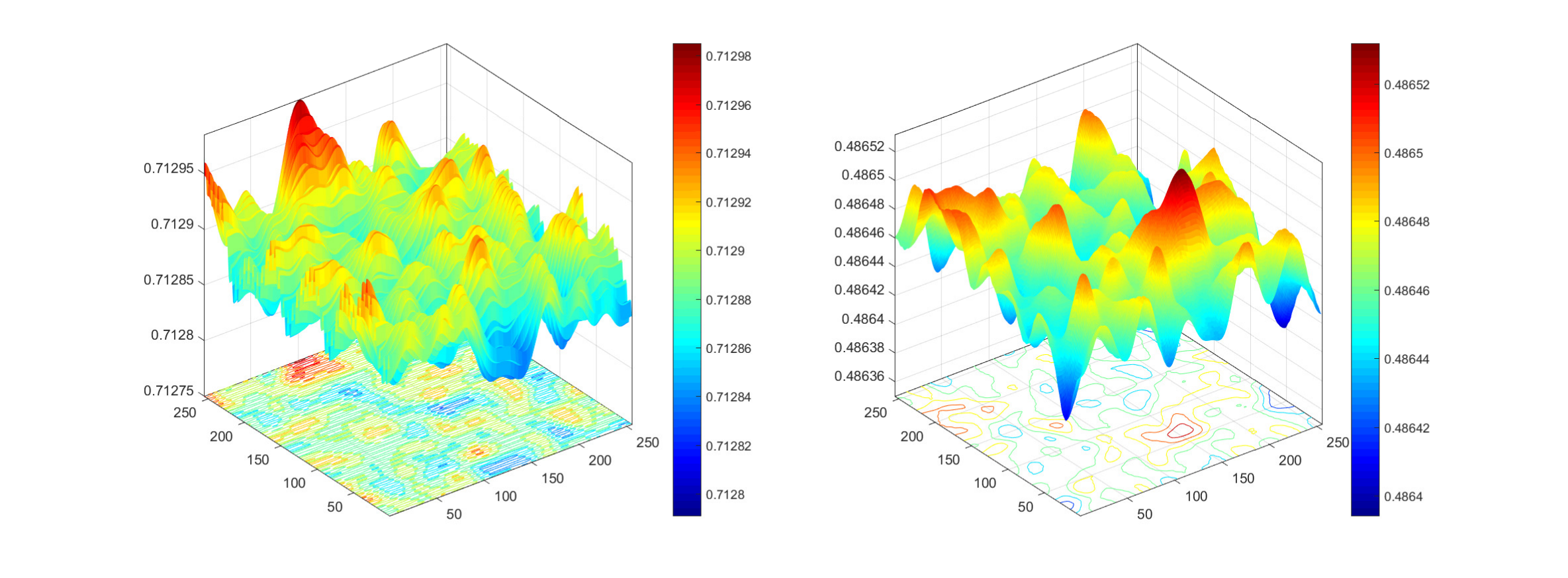}}
		\caption{Example 3: interaction of the predator and prey at different times $t = 0.001,\, 0.001,\, 0.005,\;0.01$}
		\label{fig3}
	\end{figure}

	\subsection{Cross-diffusion with chemicals in the presence of fluid}
	In this subsection, we demonstrate the external action effect on the dynamic the fluid medium, consequently on the evolution of the prey and predator densities. The spatial domain $\Omega$ corresponds to a rectangle $(0, 10) \times (0, 4)$ and contains two obstacles; see Figure \ref{domain}. We consider system \eqref{CrossDiff} with the following initial and boundary conditions:
	
	\begin{equation}\label{NRD-NS}
		\left\{
		\begin{array}{ll}
			\displaystyle U(0,\bx)=U_0,\quad n_1(0,\bx)=n_{1,0},\quad  n_2(0,\bx)=n_{2,0},&\hbox{in}\; \Omega,\\
			\displaystyle \frac{\partial n_1}{\partial \eta}=\frac{\partial n_2}{\partial \eta}=0,&\hbox{on}\; \Gamma_1\cup\Gamma_2\cup\Gamma_3\cup\Gamma_4,\\
			\displaystyle  n_1= n_2= 0,\quad U(x,y)=(u=0,v=0)^T,&\hbox{on}\; \Gamma_5\cup\Gamma_6,
			\\
			\displaystyle   U(x,y)=\Big(\frac{\partial u}{\partial \eta}=0\,,\,0\Big)^T,&\,\hbox{on}\;\Gamma_1\cup\Gamma_3,
			\\
			\displaystyle   U(x,y)=\Big(0,\,\frac{\partial v}{\partial \eta}=0\Big)^T,&\hbox{on}\; \Gamma_2,\\
			\displaystyle   U(x,y)=(2\,{y}\;(1-{y}),\,0)^T,&\hbox{on}\; \Gamma_4.
		\end{array}
		\right.\end{equation}

	Here, all computations have been implemented using the software package FreeFem++ \cite{HF13}. The code uses a finite element method based on the weak formulation of cross-diffusion with chemicals system \eqref{CrossDiff} in an iterative manner as follows:
	\begin{itemize}
		\item Solve Navier-Stokes equations and the incompressibility condition $\eqref{CrossDiff}_5$  with the Characteristic
		Galerkin method. We mention that we have used a classical Taylor-Hood element technique, i.e. the fluid velocity $U$ is approximated by $P2$ finite elements and the pressure $p$ is approximated by $P1$ finite elements.
		\item Approximate the densities $n_1$ and $n_2$ by $P2$ finite elements and solve firstly Eq. $\eqref{CrossDiff}_1$, then
		Eq. $\eqref{CrossDiff}_2$ and finally Eqs. $\eqref{CrossDiff}_{3,4}$. We mention that we have used UMFPACK package and $\theta$-scheme with $\theta=0.49$
	\end{itemize}
	\begin{figure}[pos=!ht]
		\centering
		{\includegraphics[height=2.5in ,width=6in]{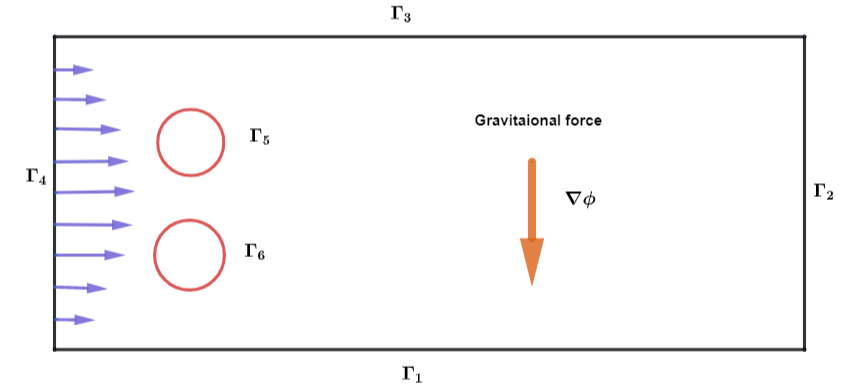}}
		\caption{The spatial domain with boundary conditions}
		\label{domain}
	\end{figure}
	
	We recall that $\nabla \phi= V_s(\rho_s - \rho_f ) g\vec{z}$ , where $V_s$ and $\rho_s$ are, respectively, the volume and the density of species, $\rho_f$ is the fluid density, and $g$ is the gravitational force. The vector $-\nabla \phi$is the resultant of gravitational forces
	$(\vec{P} = -\rho_s V_s g\vec{z} )$ and the Archimedes thrust $(\vec{Fa} = \rho_fV_s g\vec{z} ).$ In our tests, the populations are denser than the fluid and therefore a gravitational flow is created in the direction of the vector $-\overrightarrow{z}$. We consider two cases:
	
	In the first case, we illustrate the behavior of cross-diffusion--fluid with chemicals system \eqref{CrossDiff} in the absence of gravitational force; that is, $ \nabla \phi=(0, 0)$.

	In Figure \ref{Fig3}, we display the numerical simulations of the densities $n_1$ and $n_2$ of the two interacting populations and the dynamics of the fluid flow presented by the fluid velocity $U$ and the pressure $p$. Initially, we observe the cross-diffusion effect; that is to say the predator directs its movement towards the region occupied by the prey, while the prey moves toward the area where the predator is not located. Next, we notice that the prey and the predator are transported in the direction of the fluid. Moreover, we observe that the fluid flow is not influenced by the presence of the populations in the medium; however, it is affected by the presence of the obstacle in the domain.
	
	In the second case, we assume the presence of gravitational force; that is $\nabla \phi=(0,-1)$. Thus, we obtain the strong coupling
	system \eqref{CrossDiff}.
	In Figure \ref{Fig4}, we provide the numerical simulations of the two densities and the dynamics of the fluid flow  presented by the fluid velocity $U$ and the pressure $p$.
	Clearly, we observe that the densities and the fluid are influenced by the presence of gravitational force. In addition, we observe also the effect of the presence of the two obstacles.
	
	\begin{figure}[pos=!ht]
		\centering
		\subfigure{\includegraphics[height=1.2in ,width=1.5in]{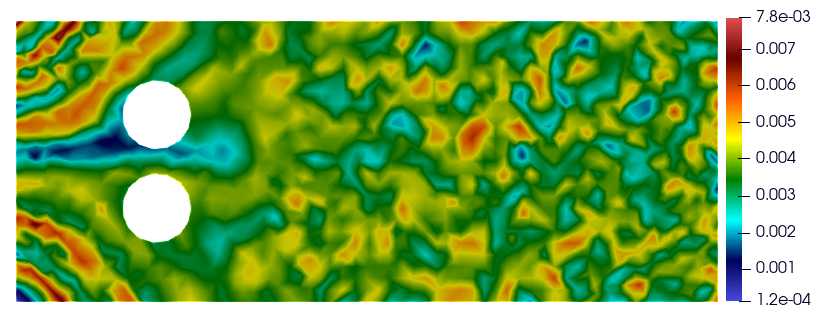}}
		~\subfigure{\includegraphics[height=1.2in ,width=1.5in]{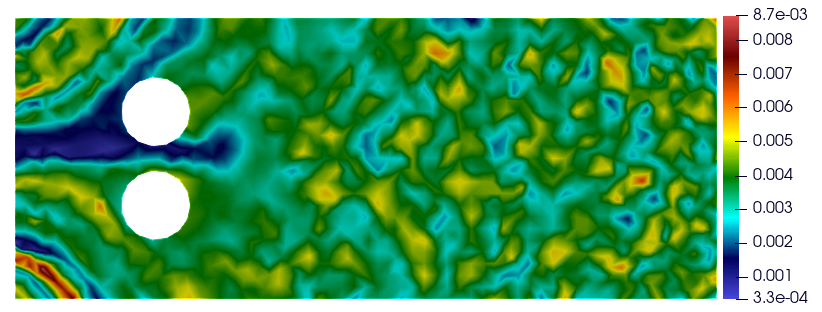}}
		~\subfigure{\includegraphics[height=1.2in ,width=1.5in]{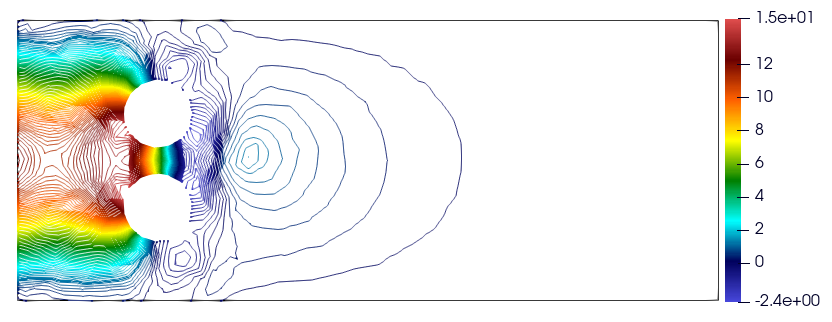}}
		~\subfigure{\includegraphics[height=1.2in ,width=1.5in]{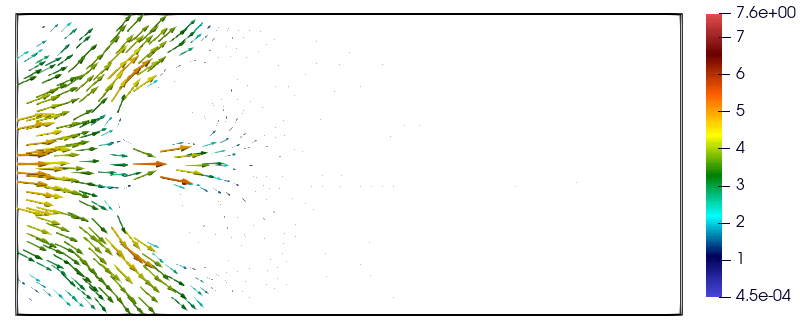}}
		
		\subfigure{\includegraphics[height=1.2in ,width=1.5in]{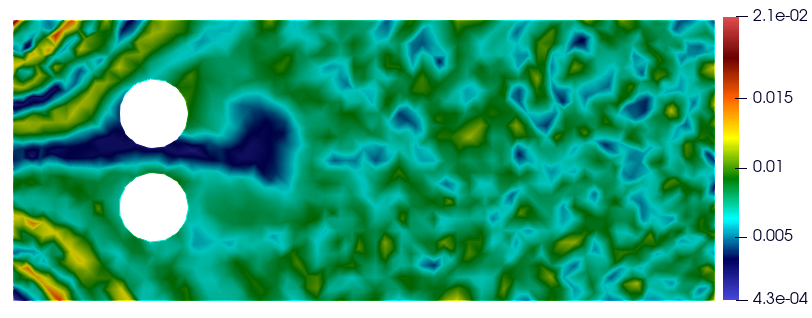}}
		~\subfigure{\includegraphics[height=1.2in ,width=1.5in]{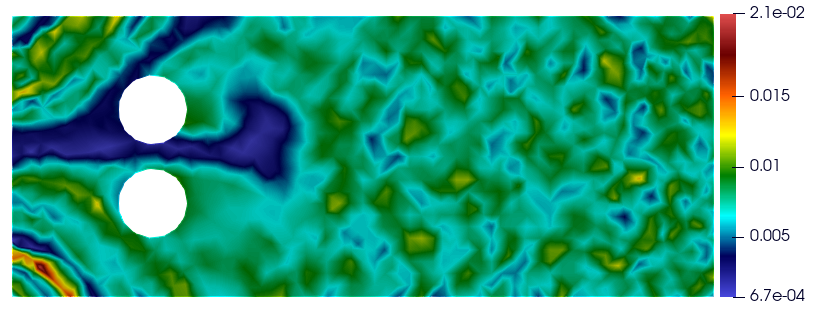}}
		~\subfigure{\includegraphics[height=1.2in ,width=1.5in]{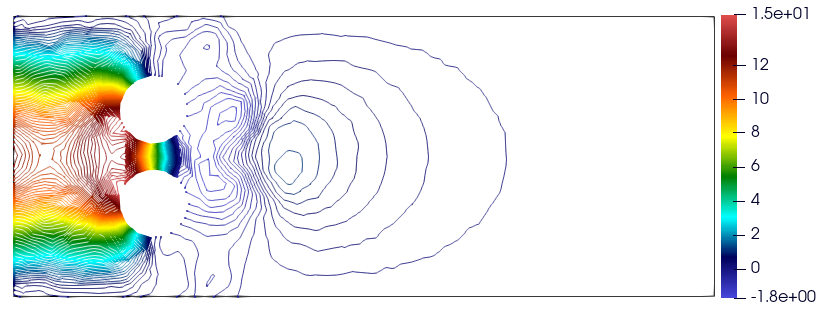}}
		~\subfigure{\includegraphics[height=1.2in ,width=1.5in]{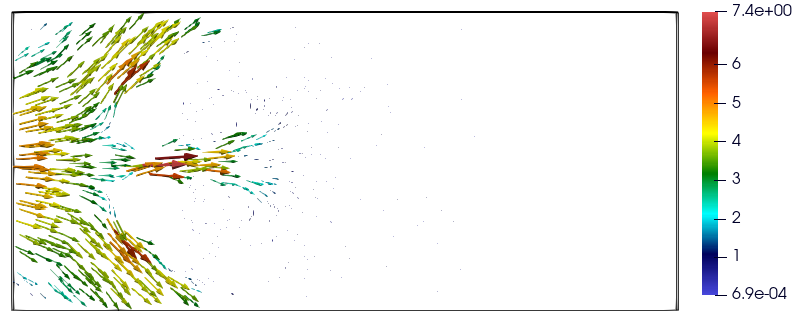}}
		
		\subfigure{\includegraphics[height=1.2in ,width=1.5in]{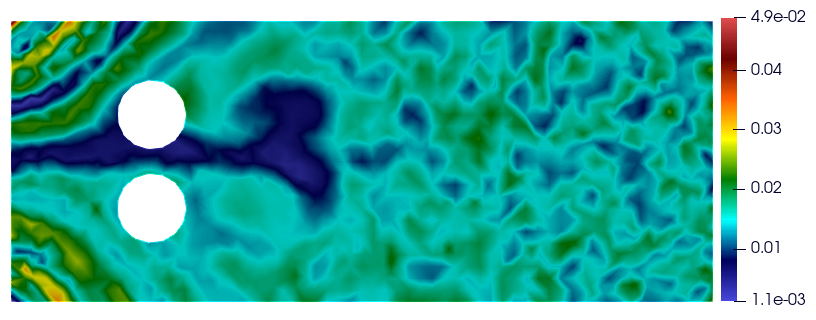}}
		~\subfigure{\includegraphics[height=1.2in ,width=1.5in]{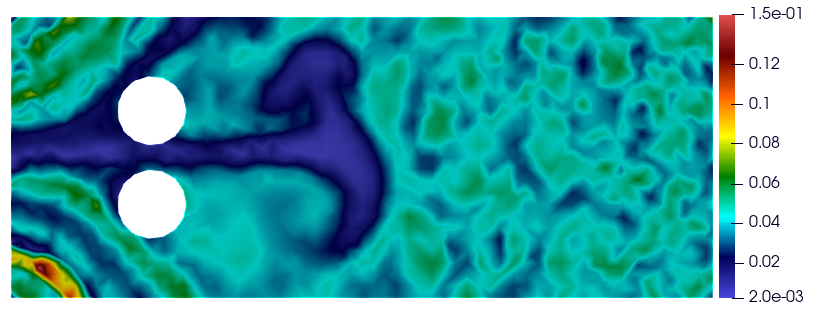}}
		~\subfigure{\includegraphics[height=1.2in ,width=1.5in]{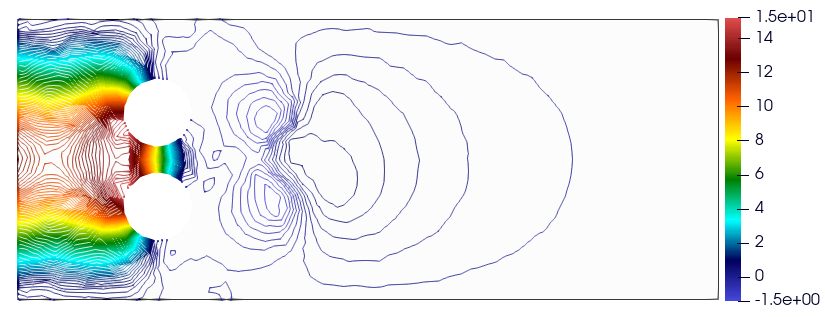}}
		~\subfigure{\includegraphics[height=1.2in ,width=1.5in]{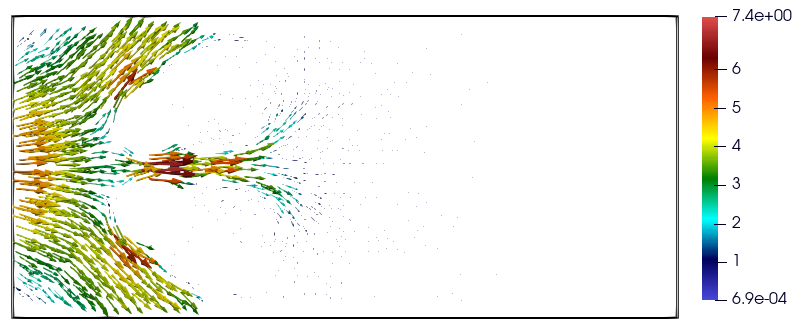}}
		
		\subfigure{\includegraphics[height=1.2in ,width=1.5in]{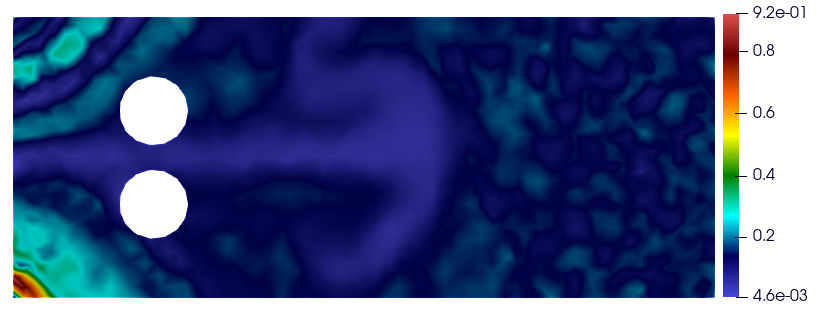}}
		~\subfigure{\includegraphics[height=1.2in ,width=1.5in]{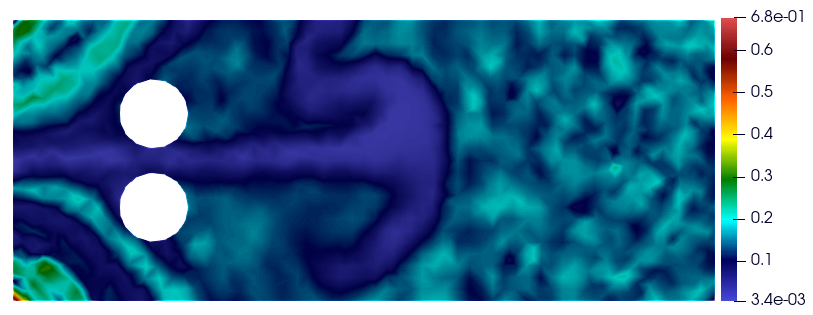}}
		~\subfigure{\includegraphics[height=1.2in ,width=1.5in]{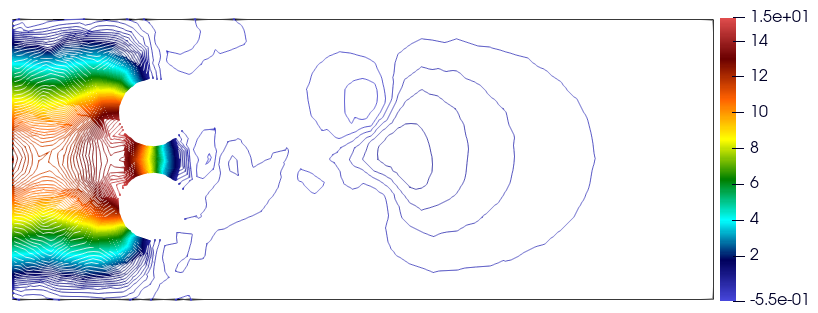}}
		~\subfigure{\includegraphics[height=1.2in ,width=1.5in]{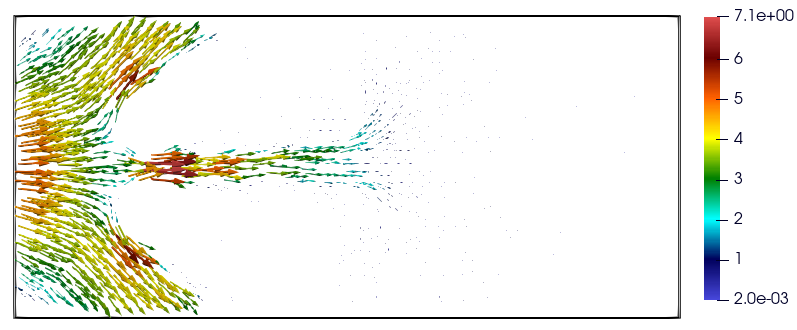}}
		\caption{Test 1: Evolution of the two interacting populations and snapshot of the fluid velocity and the pressure in the case $\nabla \phi=(0,0)$ at different times $t = 5, 7,\, 10,\, 15$.}
		\label{Fig3}
	\end{figure}
	
	\begin{figure}[pos=!ht]
		\subfigure{\includegraphics[height=1.2in ,width=1.5in]{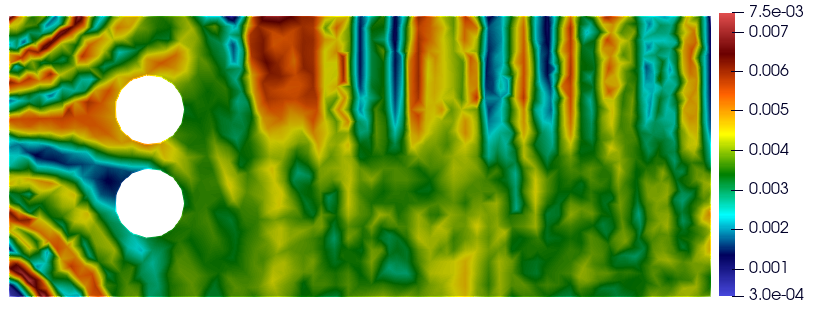}}
		~\subfigure{\includegraphics[height=1.2in ,width=1.5in]{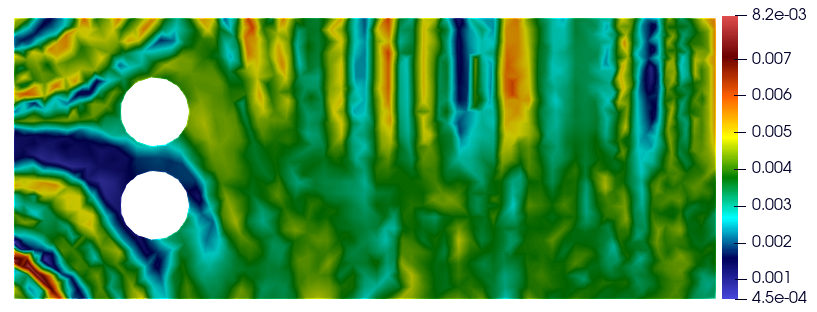}}
		~\subfigure{\includegraphics[height=1.2in ,width=1.5in]{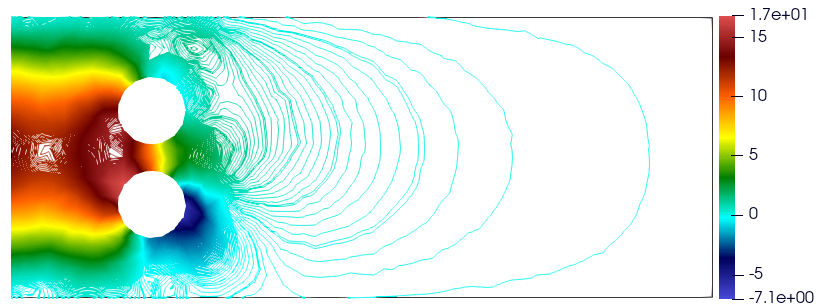}}
		~\subfigure{\includegraphics[height=1.2in ,width=1.5in]{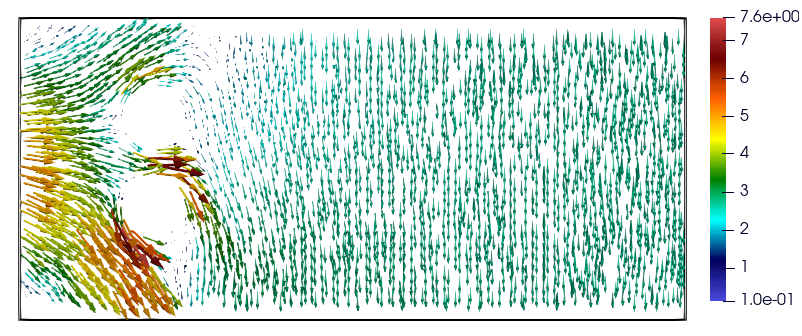}}
		
		\subfigure{\includegraphics[height=1.2in ,width=1.5in]{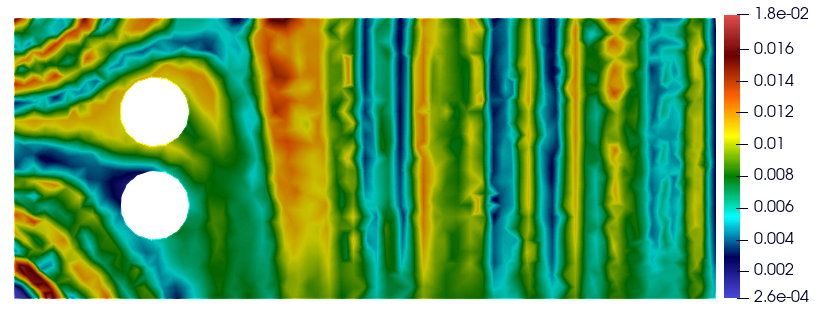}}
		~\subfigure{\includegraphics[height=1.2in ,width=1.5in]{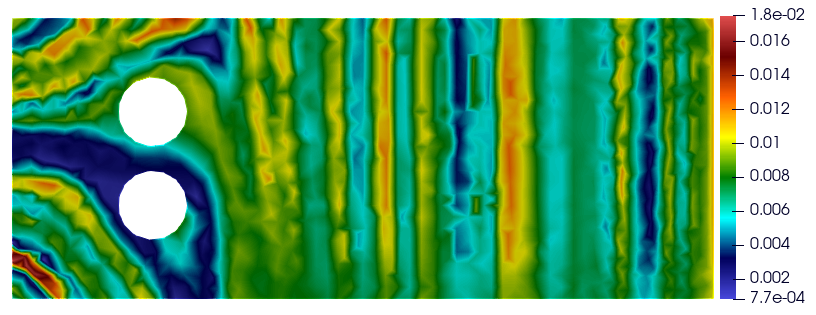}}
		~\subfigure{\includegraphics[height=1.2in ,width=1.5in]{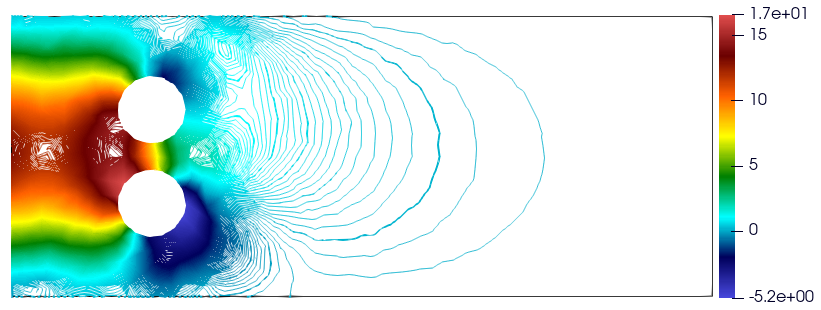}}
		~\subfigure{\includegraphics[height=1.2in ,width=1.5in]{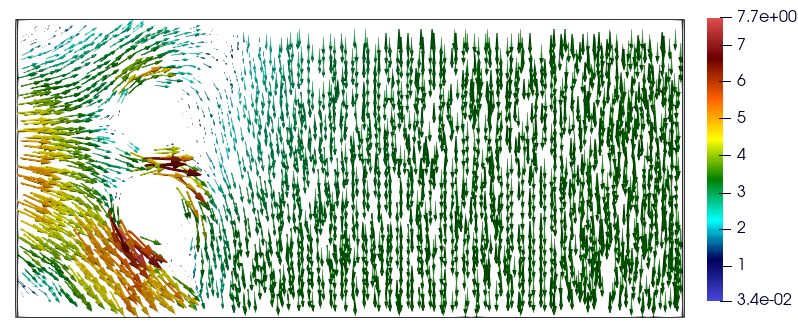}}
		
		\subfigure{\includegraphics[height=1.2in ,width=1.5in]{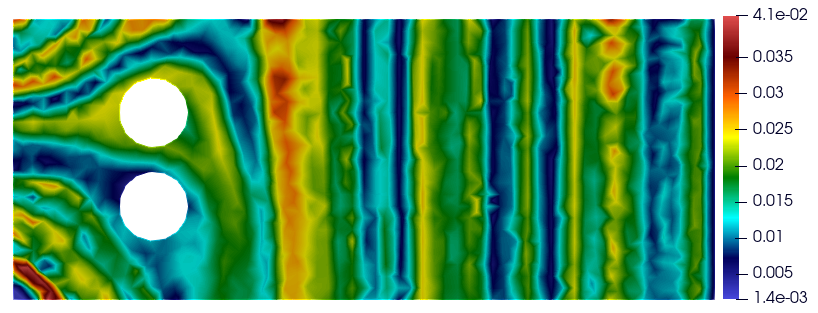}}
		~\subfigure{\includegraphics[height=1.2in ,width=1.5in]{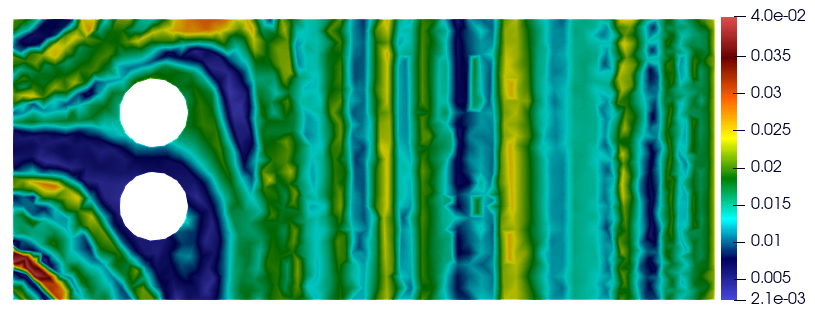}}
		~\subfigure{\includegraphics[height=1.2in ,width=1.5in]{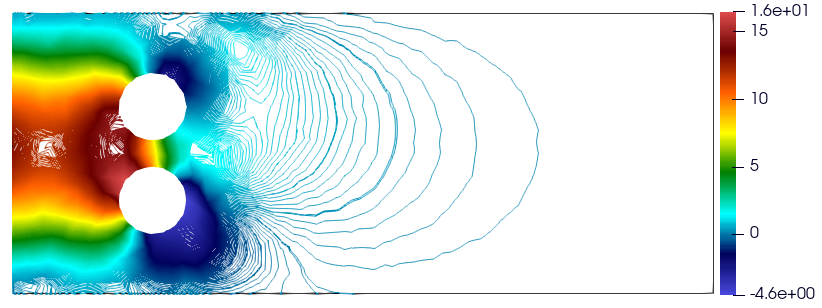}}
		~\subfigure{\includegraphics[height=1.2in ,width=1.5in]{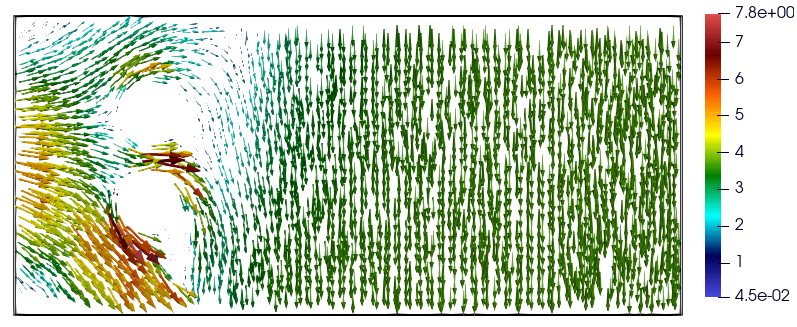}}
		
		\subfigure{\includegraphics[height=1.2in ,width=1.5in]{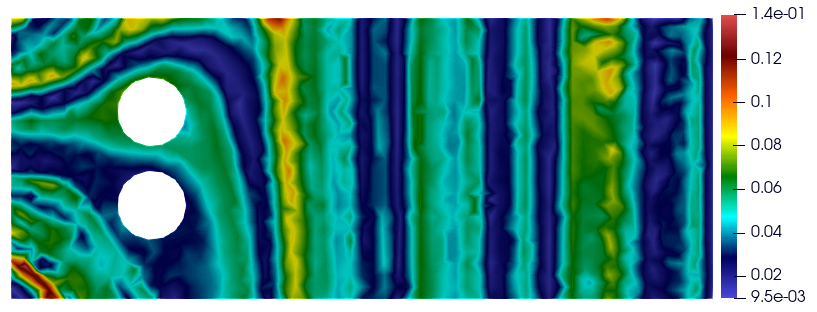}}
		~\subfigure{\includegraphics[height=1.2in ,width=1.5in]{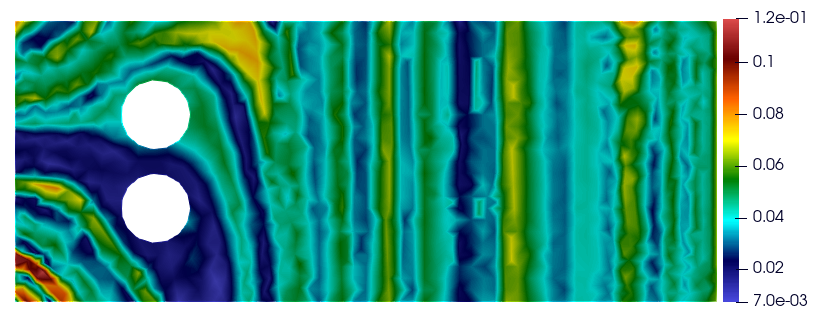}}
		~\subfigure{\includegraphics[height=1.2in ,width=1.5in]{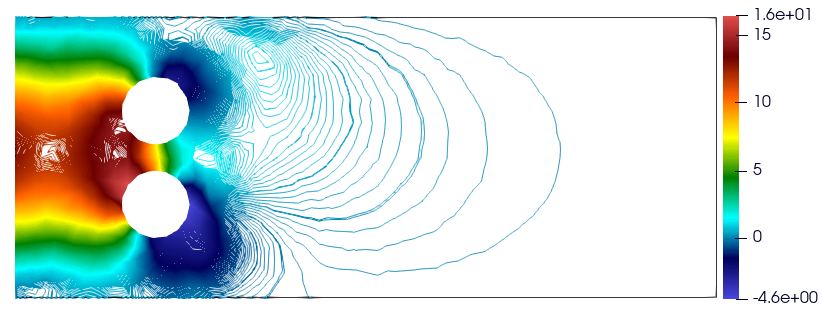}}
		~\subfigure{\includegraphics[height=1.2in ,width=1.5in]{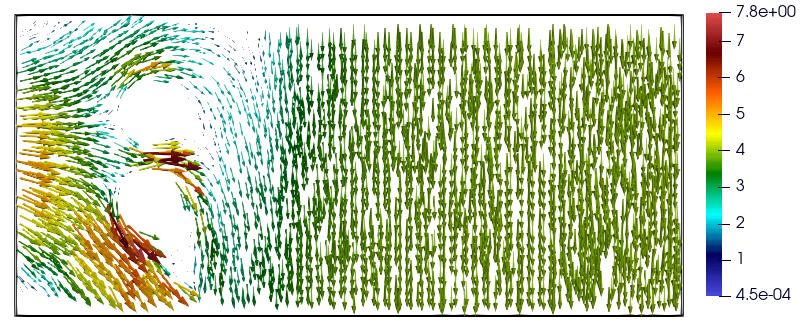}}
		\caption{Test 2: Evolution of the two interacting populations and snapshot of the fluid velocity and the pressure in the case $\nabla \phi=(0,-1)$ at different times $t = 3, 5,\, 7,\, 15$.}
		\label{Fig4}
	\end{figure}

	\section{Conclusion and perspectives}
	
	In this paper, a nonlinear chemotaxis--fluid system with chemical terms describing two interacting species living in a Newtonian fluid governed by the incompressible Navier-Stokes equations has been proposed. The existence of weak solutions of the proposed macro-scale system has been proved. The proof is based on Schauder fixed-point theory, a priori estimates, and compactness arguments. This system was derived from a new nonlinear kinetic--fluid model according to multiscale approach based on the micro-macro decomposition method.
	Several numerical simulations in two dimensional space were provided. Specifically, we showed that prey has a tendency to keep away from predator and at the same time predator has a tendency to get closer to prey. In addition, the phenomenon of pattern formation and the effects of external forces (gravity and spatial domains with two obstacles) on the dynamics of fluid flow and on the behavior of the predator-prey were demonstrated.
	
	Locking ahead, a possible perspective consists in extending the proposed macro-scale system to multiple species (e.g. three species as in \cite{BRS20}), improving our deterministic system to a stochastic system to take into account the environmental noise. Another interesting development would be the numerical analysis of the multiscale micro-macro decomposition method in two-dimensional space, for instance see \cite{BTZ22}. 

\section*{Acknowledgment} This work was done while MB visited ESTE of Essaouira at the University of Cadi Ayyad, Morocco, and he is grateful for the hospitality.

	\bibliographystyle{cas-model2-names}
	\bibliography{BIB}

\end{document}